\newcommand{\norm}[1]{\lVert #1\rVert}
\newcommand{\abs}[1]{|#1|}
\newcommand{\diff}{\text{d}}
\newcommand{\cat}{\text{cat}}
\theoremstyle{plain}\newtheorem{teo}{Theorem}[section]
\theoremstyle{plain}\newtheorem{proposition}[teo]{Proposition}
\theoremstyle{plain}\newtheorem{lemma}[teo]{Lemma}
\theoremstyle{plain}\newtheorem{corollary}[teo]{Corollary}
\theoremstyle{definition}\newtheorem{definition}[teo]{Definition}
\theoremstyle{definition}\newtheorem{example}[teo]{Example}
\theoremstyle{definition}\newtheorem{assumption}{Assumption}
\theoremstyle{remark}\newtheorem{remark}[teo]{Remark}
\numberwithin{teo}{section} 
\numberwithin{equation}{section}
\newcounter{mycomment}
\newcommand{\beq}{\begin{equation}}
\newcommand{\eeq}{\end{equation}}
\newcommand{\bere}{\begin{remark}}
\newcommand{\ere}{\end{remark}}
\newcommand{\bpr}{\begin{proposition}}
\newcommand{\epr}{\end{proposition}}
\def\bal#1\eal{\begin{align}#1\end{align}}       
\def\baln#1\ealn{\begin{align*}#1\end{align*}}     
\def\bml#1\eml{\begin{multline}#1\end{multline}}    
\def\bmln#1\emln{\begin{multline*}#1\end{multline*}} 
\def\bga#1\ega{\begin{gather}#1\end{gather}}
\def\bgan#1\egan{\begin{gather*}#1\end{gather*}}
\newcommand{\R}{\mathbb R}
\newcommand{\J}{\mathcal{J}}
\newcommand{\A}{\mathcal{A}}
\newcommand{\D}{\mathcal{D}}
\newcommand{\de}{\mathrm{d}}
\newcommand{\inte}{\int_0^1}
\newlist{list1}{enumerate}{1}
\setlist[list1,1]{label={(\roman{list1i})},ref=\roman{list1i}}
\title[Indefinite Lagrangians with an affine Noether charge]{A variational setting for \\  an indefinite Lagrangian with \\ an affine Noether charge
}
\author[E. Caponio]{Erasmo Caponio}
\address{Dipartimento di Meccanica, Matematica e Management, Politecnico di Bari, Bari, Italy}
\email{erasmo.caponio@poliba.it}
\thanks{E. Caponio is partially supported by PRIN 2017JPCAPN {\em Qualitative and quantitative aspects of nonlinear PDEs.}}
\author[D. Corona]{Dario Corona}
\address{Mathematics Division, School of Science and Technology, University of Camerino, Camerino, Italy}
\email{dario.corona@unicam.it}
\thanks{Both authors thank the partial support of GNAMPA INdAM – Italian National Institute of High Mathematics.}
\subjclass[2020]{37J05; 53C50; 53C60}
\keywords{Indefinite action functional, Noether charge, critical point theory}
\begin{document}
\begin{abstract}
	We introduce a variational setting for the action functional of an
	autonomous and indefinite Lagrangian on a finite dimensional manifold $M$.
	Our basic  assumption is the existence of an infinitesimal symmetry
	whose Noether charge is the sum of a  one-form and a function on $M$.
	Our setting includes different types of Lorentz-Finsler Lagrangians
	admitting a timelike Killing vector field. 
\end{abstract}
\maketitle

\section{Introduction}
The principle of least or stationary-action in Lagrangian mechanics has been at
the heart of the development of the variational
calculus. It has given rise to different methods for solving 
the problem of finding (or at least establishing the existence of) a path of
evolution between two points of a dynamical system described by a finite
number of variables (see, e.g., \cite{Bolza04, Morse28, Tonell25}).
The techniques  developed to get solutions have been proved to be useful in the
study of general Lagrangian systems with an infinite number of degrees of
freedom (see, e.g., \cite{Fonseca07,Struwe08}).
A very classical  field of application of  these methods is  the geodesic
problem  in Riemannian and Finsler geometry. In this case,  completeness of
the metric is enough to get a solution with fixed end points and topological
arguments give  multiplicity of geodesics.
The landscape is quite different for the analogous problem on a Lorentzian
manifold where  (geodesic) completeness is not enough to get compactness
properties on the space of paths between two points and other geometric
conditions as global hyperbolicity have been considered as a replacement
\cite{avez1963,seifert1967}.
Only recently the features underlying global hyperbolicity, in connection with
the geodesic problem and more generally with causality, start to find a field
of applications beyond classical Lorentzian geometry
(see \cite{
	BerSuh18,
	FatSic12,
	JavSan20,
Minguz19}).

On the other hand, the existence of a symmetry that leaves invariant the action
functional of a Lagrangian is a source of information about its stationary
points through the Noether's theorem.
The impact of this result in variational calculus can be hardly overestimated.
A nice  application of it  to the geodesic problem of a Lorentzian manifold can be found in \cite{giannoni1999}, where  a stationary spacetime $(M,g_L)$ 
(i.e. a spacetime endowed with a  timelike Killing vector field) is
considered.
In this case, the Noether charge associated to the  Killing field
is used to get a reduction of the Sobolev manifold of paths between two points
$p$ and $q$ in $M$, where the energy functional of the Lorentz metric is defined,
to the infinite dimensional submanifold $\mathcal N_{p,q}$
of the curves with a.e. constant Noether charge.
This reduction resembles the classical Routh reduction for Lagrangian systems
(see, e.g. \cite{CraMes08, MaRaSc00})
but it involves merely the paths space and not the phase space. 
The roots of the idea of this infinite dimensional reduction are in a couple of
papers about geodesic connectedness of  static and stationary spacetimes
admitting a global splitting \cite{benci1991, GiannoniMasiello1991} and,
indeed, some local computation in \cite{giannoni1999} and in the present paper
(see Theorem~\ref{teo:cprecbound}) are based on those papers.

Our  goal is to show that the full variational setting in \cite{giannoni1999}
admits a generalization for an indefinite $C^1$ Lagrangian $L$
on a smooth finite dimensional manifold $M$.
We assume that $L$ is invariant by a one-parameter group of local
diffeomorphisms whose infinitesimal generator is a vector field $K$ and that
the associated Noether charge is a $C^1$ function $N$ on $TM$, which is affine in each tangent space $T_xM$:
\begin{equation}
	\label{N}
	N(x,v)=Q(v)+d(x),
\end{equation}
where $Q$ and $d$ are  a one-form and a function on $M$, respectively. We assume also that $d$ is invariant by the flow of $K$ and  
$Q(K)<0$ (see Assumption~\ref{ass:L}). 
Notice that in the case of a stationary Lorentzian manifold, $d=0$ and $Q$ coincides with the one-form metrically equivalent to the timelike Killing field $K$.

In Theorems~\ref{teo:main} and \ref{teo:multiplicity} we obtain
existence and multiplicity of weak solutions to the Euler-Lagrange
equation of the action functional of $L$ connecting two given points on $M$.
The regularity of solutions is analysed in Appendix~\ref{sec:regularity}.  
A key assumption in Theorem~\ref{teo:main} is {\em $c$-boundedness}
(Definition~\ref{c-boundedness}) of $\mathcal N_{p,q}$.
Under conditions contained in Assumptions~\ref{ass:L}--\ref{ass:bounds},
$c$-boundedness implies that the reduced action functional $\J$
(differently from the action)
is bounded from below (Proposition~\ref{prop:Jbounded})
and satisfies the Palais-Smale condition (Theorem~\ref{PS}).
We show in Section~\ref{pseudocoercivity} that $c$-boundedness is
essentially equivalent to {\em $c$-precompactness} of $\mathcal N_{p,q}$,
a condition introduced in \cite{giannoni1999}
which is a compactness property of the set of paths in a
sublevel of the reduced action functional.
Actually, on a stationary Lorentzian manifold $M$,
if $\mathcal N_{p,q}$ is $c$-precompact for all $c\in\R$
then $M$ is globally hyperbolic (see \cite[Proposition B1]{giannoni1999}
in the case when the timelike Killing vector field is complete and
\cite[Section 6.4-(a)]{CaFlSa08} for any timelike Killing vector field).
On the converse, if $M$ is globally hyperbolic with a complete smooth
Cauchy hypersurface then $\mathcal N_{p,q}$ is $c$-precompact for all $c\in\R$
(see \cite[Theorem 5.1]{CaFlSa08}).
Thus, if $c$-precompactness is satisfied for all $c\in\R$,
the spacetime $M$ cannot be compact.
Inspired by Proposition A.3 in \cite{giannoni1999},
we  give a condition that implies $c$-precompactness of $\mathcal N_{p,q}$,
for all $c\in \R$ and all $p,q\in M$, in our setting,  and that cannot be satisfied if $M$ is
compact (see Proposition~\ref{cbounded}).

The  Lagrangians that we consider (see Section~\ref{sec:examples}) include,
but are not limited to, $C^1$ stationary Lorentzian metrics,
electromagnetic type Lagrangians on a stationary Lorentzian manifold with a
Killing vector field $K$ and $K$-invariant potentials (see, e.g.
\cite{Bartol99, CapMas02, CaMaPi04, Vitori20}) and
some stationary Lorentz-Finsler metrics.
Loosing speaking, a Lorentz-Finsler metric  is an  indefinite Lagrangian,
positively homogeneous of degree two in the velocities,
that generalizes the quadratic form of a Lorentzian metric in the same way as 
the square of a Finsler metric generalizes the square of the norm of a Riemannian metric.
They were studied  by K. Beem \cite{Beem70} following some work by H. Busemann.
Although considered from time to time in works about anisotropy in
special and general relativity
(even if  they often  appear as the square of a more
fundamental function, positively homogeneous of degree one in the velocities,
see e.g. \cite{Brandt92,Horvat58,Rutz93}),
there has been a growing interest about them
(or their possible generalizations as non-degenerate Lagrangians defined
on a cone bundle on $M$)
in the  last decade, see for example 
\cite{
	AazJav16,
	BeJaSa20,
	CapMas20,
	GaPiVi12,
	HasPer19,
	HoPfVo19,
	LaPeHa12,
	LuMiOh21,
	Minguz15,
Perlic06}.

Some explicit examples that are covered by our present setting   are
Beem's Lorentz-Finsler metrics endowed with a timelike Killing vector field $K$
including also  their sum with  a potential function and a one-form, both
invariant by the flow of $K$ (see Example~\ref{beemxample}).
In particular, this class includes Lagrangians defined as
\[
	L=F^2-\omega^2,
\]
introduced in \cite{JavSan20},
where $F$ and $\omega$ are, respectively,
a Finsler metric and a one-form on $M$ both invariant
by the one-parameter group of local diffeomorphisms
generated by $K$, provided  a sign  assumption on $F^2(K)-\omega^2(K)$  is satisfied
(see Example~\ref{JS}).
Other examples are given by
Lagrangians $L$ that
locally, i.e. on a  neighborhood of the type $S\times (a,b)\subset M$,
can be expressed as
\begin{equation}
	\label{CS}
	L= L_0+2(\omega+d/2) \de t -\beta\de t^ 2,
\end{equation}
where $L_0$ is a $C^1$ Tonelli Lagrangian on $S$, with quadratic growth in the velocities,
$\omega$, $d$  and $\beta$ are respectively a $C^1$ one-form on $S$
and two $C^1$ functions on $S$ with $\beta>0$
(see Example~\ref{ex:general-ind-Lagrangian} and Proposition~\ref{productype}).
We include the possibility that the Lagrangian $L_0$
might not be twice differentiable on the zero section of $TS$,
but we require that it is pointwise strongly convex
(see Assumption~\ref{ass:L1}-\eqref{monotonehyp}).
Notice that the possible lack of twice differentiability of $L_0$
at the zero section implies that $L$
is not twice differentiable along the line bundle defined by $K=\partial_t$,
being $t$ the natural coordinate on the interval $(a,b)$.
Lagrangians of the type \eqref{CS} on a global splitting $S\times\R$,
with $L_0$ being the square of a Finsler metric and $d=0$,
were introduced in \cite{LaPeHa12} when $\omega= 0$
(see also \cite{CapSta16}) and in \cite{caponio2018} for $\omega\neq 0$.

Let us point out a comment about the regularity of the objects
we consider in this work.
We consider a smooth, finite dimensional manifold $M$;
the Lagrangian $L$ and the vector field $K$ are of class $C^1$ on $TM$.
Lorentz-Finsler Lagrangians are  
not twice differentiable at the zero section of $TM$,
hence assuming that $L$ is  $C^1$
is  motivated by that wide class of indefinite Lagrangians. 
We are confident  that both the regularity of $L$
and the linearity  of the Noether charge can be further relaxed  at
least for the existence of a global  minimizer of the reduced action
functional.
This is clearly suggested by the fact that $L$ is the sum of a
Lagrangian which is strongly convex in the velocities and a $C^1$ Lagrangian
related to the Noether charge (see \eqref{L}),
and that some computations of this work are more related
to the sublinearity of the Noether charge than to its expression \eqref{N}.

\section{Notations, assumptions and preliminary results}
Let $M$ be a smooth, connected, $(m+1)$-dimensional manifold,
with $m \ge 1$;
let us denote by $TM$ the tangent bundle of $M$.
Throughout the paper, we  consider a (auxiliary) complete Riemannian metric $g$ on $M$ and we denote by 
$\norm{\cdot}\colon TM \to \mathbb{R}$ its induced norm, i.e.
$\norm{v}^2 = g(v,v)$ for all $v \in TM$. 

We will often denote an element of $TM$ as a couple $(x,v)$,
$x\in M$, $v\in T_xM$
(for example we use such a  notation in connection with the variables of
an autonomous Lagrangian $L:TM\to \R$,  i.e. we will write $L=L(x,v)$).
On the other hand,
we will avoid specifying the point $x$ where a one-form $\omega$ or
a vector field $K$ on $M$ is applied, and we will write,
for example $\omega(v)$, $v\in TM$ or also $\omega(K)$.
Some exceptions are possible for the sake of clarity, and we will write then,
e.g., $K_x$ or $\omega_x(v)$, $v\in T_xM$ and also $\omega_x(K)$.
We will  often explicitly write the variable of a function on $M$,
like  in  $d(x)$, $C(x)$, $\lambda(x)$, etc.
When a vector field $K$ on $M$ is evaluated along a curve $z:[0,1]\to M$,
we will write $K(z)$.
In some cases we will look at a one-form $Q$ on $M$ also as
a function on $TM$ writing then $Q(x,v)$.

Let $L\colon TM\to\R$ be a Lagrangian on $M$.
For any $(x,v)\in TM$, we denote by $\partial_v L(x,v)[\cdot]$
the vertical derivative of $L$, 
i.e. for all $x\in M$ and all $v,w\in T_xM$
\[
\partial_v L(x,v)[w]:=\frac{\de}{\de s}L(x,v+sw)|_{s=0}.
\]
We need also a notion of horizontal derivative of the Lagrangian $L$
(a derivative w.r.t. $x$).
Let $(x^0,\dots,x^m)$ be coordinates on $M$
and let $(x^0,\dots,x^m, v^{0},\dots,v^m)$ be the induced ones on $TM$.
Let $(x,v)\in TM$, with coordinates values  $(x^0,\dots,x^m, v^{0},\dots,v^m)$;
we define  $\partial_x L(x,v)[\cdot]$ as the $v$-depending one-form
on $M$ locally given by 
\[
\partial_x L(x,v)[w]:=\sum_{i=0}^m\frac{\partial L}{\partial x^i}(x,v)w^i.
\]

\begin{remark}
\label{atlas}
Even though, differently from the vertical derivative,
this definition is not intrinsic, it  fits our purposes
(in the following, we will make extensively use of local arguments
in computations involving $L$).
In particular, we denote by
$\| \partial_x L_c(x,v)\|$ and $\| \partial_v L_c(x,v)\|$
the two scalar fields on $TM$
which are pointwise the
norm of the above two linear operators w.r.t. $g$.
\end{remark}

\begin{assumption}
	\label{ass:L}
	The Lagrangian $L\colon TM \to \mathbb{R}$ satisfies the following conditions:
	\begin{list1}
	\item\label{c1}$L\in C^1(TM)$;
	\item\label{linear}
		there exists a $C^1$ vector field $K$ on $M$ such that 
		$L$ is invariant by the one-parameter group of
		local $C^1$ diffeomorphisms generated by $K$
		(we call $K$ an {\em infinitesimal symmetry of $L$});
		moreover the {\em  Noether charge},
		i.e. the map $(x,v)\in TM\mapsto \partial_vL(x,v)[K]\in\mathbb{R}$,
		is a function $N$ on $TM$ which is the sum of
		a $C^1$ one-form $Q$ on $M$
		and a $C^1$ function $d\colon M\to\R$, i.e.
		\begin{equation}
			\label{noether}
				N(x,v) :=\partial_vL(x,v)[K]=Q(v)+d(x);
		\end{equation}
	\item\label{negativeQKhyp}the function $d$ in \eqref{noether} is invariant by the flow of $K$ (in particular the case when $d$ is a constant function is compatible); moreover,
		\begin{equation}
			\label{negativeQK}
			Q(K)<0.
		\end{equation}
	\end{list1}
\end{assumption}

\begin{remark}
	Vector fields $K$ which are infinitesimal symmetries for $L$
	can be characterized similarly to Killing vector fields for Finsler metrics
	(see, e.g., \cite{caponio2018}).
	We denote by $K^c$ the \textit{complete lift} of  $K$ to $TM$,
	which,   using Einstein summation convention,
	is locally defined as:	
	\begin{equation}
		\label{eq:lift-LocCoor}
		(K^c)_{(x,v)}=K^h(x)\frac{\partial }{\partial x^h} 
		+ \frac{\partial K^h}{\partial x^i}(x)v^i \frac{\partial }{\partial v^h}.
	\end{equation}
	It follows that,
	if $\psi$ is a local flow of $K$,
	then 
	for any $(x,v)\in TM$ the local flow $\psi^c$ of $K^c$ on $TM$ is given by
	$\psi^c(t,x,v)=\big(\psi(t,x),\partial_x\psi(t,x)[v]\big)$.
	Hence,  
	\[
		K^c(L)\big(\psi^c(t,x,v)\big)=
		\dfrac{\partial \big (L\circ \psi^c\big)}{\partial t}(t,x,v)
	\]
	and, since 
	\begin{equation}
		\label{eq:L-Kinvariant}
		\dfrac{\partial \big (L\circ \psi^c\big)}{\partial t}(t,x,v)=0,
	\end{equation}
	we get 
	\begin{equation}
		\label{eq:Killing-LocCoord}
		K^c(L)(x,v) 
		= K^h(x)\frac{\partial L }{\partial x^h}(x,v)
		+ \frac{\partial K^h}{\partial x^i}(x)v^i \frac{\partial L}{\partial v^h}(x,v)
		=0.
	\end{equation}
\end{remark}

\begin{remark}
	Since $K$ is an infinitesimal symmetry of $L$,
	by Noether's theorem, the Noether charge 
	is constant for any weak solution $z$ of the Euler-Lagrange equation
	of the Lagrangian $L$,
	independently from the boundary conditions. 
	This can be seen by recalling that a weak solution $z=z(s)$
	of the Euler-Lagrange equation is a $C^1$ curve
	(for fixed end points boundary conditions, see Appendix~\ref{sec:regularity})
	that locally  (i.e. in natural local coordinates of $TM$)
	satisfies the system of equations
	\begin{equation}
		\label{eq:Euler-Lagrange}
		\frac{\partial L}{\partial x^i}\big(z(s),\dot{z}(s)\big)
		= \frac{\de}{\de s}
		\left(\frac{\partial L}{\partial v^i}\big(z(s),\dot{z}(s)\big)\right),
		\quad \forall i = 0,\dots,m,
	\end{equation}
	hence from \eqref{eq:Killing-LocCoord} we get
	\begin{align*}
		\lefteqn{\frac{\de  }{\de s}\left(\frac{\partial L}{\partial v^i}\big(z(s),\dot{z}(s)\big)K^i(z(s))\right)}&\\
																														 &\quad=\frac{\de }{\de s}\left(\frac{\partial L}{\partial v^i}\big(z(s),\dot{z}(s)\big)\right)K^i(z(s))+\frac{\partial L}{\partial v^i}\big(z(s),\dot{z}(s)\big)\frac{\partial K^i}{\partial x^h}(z(s))\dot z^h(s)\\
																																			 &\quad=\frac{\partial L}{\partial x^i}\big(z(s),\dot{z}(s)\big)K^i(z(s))+\frac{\partial L}{\partial v^i}\big(z(s),\dot{z}(s)\big)\frac{\partial K^i}{\partial x^h}(z(s))\dot z^h(s)=0.
	\end{align*}
\end{remark}

\bigskip
Let us introduce a Lagrangian $L_c$ on $TM$ defined as 
\begin{equation}
	\label{L}
	L_c(x,v):= L(x,v)- \frac{Q^2(v)}{Q(K)}.
\end{equation}

\begin{proposition}
	\label{prop:Lproperties}
	The  following statements hold:
	\begin{list1}
	\item\label{OKL1}
		$L_c\in C^1(TM)$;

		\medskip
		\noindent \hspace{-1.4cm} and, for all $(x,v)\in TM$:
		\medskip

	\item\label{rewriteLc}
		\begin{equation}
			\label{QK}
			Q_x(K)  = 2\left( L(x,K) - L(x,0)-d(x) \right);
		\end{equation}
	\item\label{menoQ}
		\begin{align}
			&L_c(x,0)=L(x,0),\nonumber \\ 
			&L(x,K) + L_c(x,K) = 2 \big(L(x,0)+d(x)\big);\nonumber\\
			\intertext{and}
			&\partial_v L_c(x,v)[K]=-Q(v)+d(x),\label{Nc}
	\end{align}

	\item\label{LflowK}
		the flow of $K$ preserves also $L_c$, i.e. $K^c(L_c)=0$.
	\end{list1}
\end{proposition}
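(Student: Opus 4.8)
The plan is to establish the four items in order, the first three by direct computation and the last by a symmetry argument that is the only non-routine point. For \ref{OKL1} I would note that the correction term $Q^2(v)/Q(K)$ in \eqref{L} is $C^1$ on all of $TM$: its numerator $v\mapsto\big(Q(v)\big)^2$ is a quadratic form in the fibre with $C^1$ coefficients in $x$, since $Q$ is a $C^1$ one-form, while the denominator $Q(K)$ is a $C^1$ function on $M$ that never vanishes by \eqref{negativeQK}; hence $L_c$ is a difference of $C^1$ functions, using \ref{c1}. For \ref{rewriteLc} the idea is to evaluate the Noether charge \eqref{noether} along the fibre ray $s\mapsto(x,sK)$: linearity of $Q$ gives $\partial_vL(x,sK)[K]=Q(sK)+d(x)=sQ(K)+d(x)$, and since $\tfrac{\de}{\de s}L(x,sK)=\partial_vL(x,sK)[K]$, integrating over $s\in[0,1]$ yields $L(x,K)-L(x,0)=\tfrac12 Q(K)+d(x)$, which rearranges to \eqref{QK}.

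For \ref{menoQ}, the first identity follows at once from $Q(0)=0$. For the second I would compute $L_c(x,K)=L(x,K)-Q^2(K)/Q(K)=L(x,K)-Q(K)$ and then substitute \eqref{QK} to get $L(x,K)+L_c(x,K)=2L(x,K)-Q(K)=2\big(L(x,0)+d(x)\big)$. For \eqref{Nc} I would use that $Q(K)$ is independent of $v$ and that $v\mapsto Q(v)$ is linear, so $\partial_v\big(Q(v)\big)[K]=Q(K)$ and the chain rule gives $\partial_v\big(Q^2(v)/Q(K)\big)[K]=2Q(v)$; subtracting from the Noether charge gives $\partial_vL_c(x,v)[K]=N(x,v)-2Q(v)=-Q(v)+d(x)$.

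The substance lies in \ref{LflowK}. Since $K^c(L)=0$ by \eqref{eq:Killing-LocCoord}, writing $f:=Q(v)$ (a function on $TM$, linear in the fibre) and $g:=Q(K)$ (a function on $M$), I need only $K^c(f^2/g)=0$, for which by the quotient rule it suffices that $K^c(f)=0$ and $K^c(g)=0$. The main obstacle is the first, which is precisely the flow-invariance $\mathcal L_KQ=0$ of the one-form $Q$. To obtain it I would first show the Noether charge itself is flow-invariant: differentiating the invariance relation $L\big(\psi(t,x),\partial_x\psi(t,x)[v]\big)=L(x,v)$ from \eqref{eq:L-Kinvariant} in the fibre direction and evaluating at $K_x$, while using that $K$ is invariant under its own flow (so $\partial_x\psi(t,x)[K_x]=K_{\psi(t,x)}$), gives $N\big(\psi^c(t,x,v)\big)=N(x,v)$; differentiating at $t=0$ yields $K^c(N)=0$. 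Because $d$ is flow-invariant by \ref{negativeQKhyp} and depends only on $x$, one has $K^c(d)=K(d)=0$, so from $N=Q(v)+d$ I get $K^c(f)=K^c\big(Q(v)\big)=0$, which in coordinates reads $(\mathcal L_KQ)(v)=0$. Finally, as $Q(K)$ is a function on $M$, $K^c(g)=K\big(Q(K)\big)=(\mathcal L_KQ)(K)+Q([K,K])=0$, so the quotient rule gives $K^c(f^2/g)=0$ and hence $K^c(L_c)=0$. I would note throughout that all the differentiations respect the mere $C^1$ regularity of $L$ and $K$, since they involve only first derivatives of $L$ and the (continuous) complete lift $K^c$.
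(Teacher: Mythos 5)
Your proof is correct. Items (i)--(iii) coincide with the paper's argument: the paper likewise obtains \eqref{QK} by integrating the Noether charge along the ray $\alpha\mapsto(x,\alpha K)$ (via the auxiliary function $l(\alpha)=L(x,\alpha K)-L(x,0)$), and item (iii) is read off algebraically from \eqref{L} and \eqref{QK}. The genuine difference is item (iv). Both proofs reduce it to showing that $Q$, viewed as a fibrewise-linear function on $TM$, and $Q(K)$ are invariant under $K^c$ and $K$ respectively; but the mechanisms differ. The paper argues locally: since $K$ is nowhere vanishing by \eqref{negativeQK}, it chooses coordinates with $K=\partial/\partial x^0$, so that $K^c(Q)=\partial^2 L/\partial x^0\partial v^0$, and concludes by exchanging the mixed partials --- a step that requires justification at $C^1$ regularity, supplied by noting that $\partial L/\partial v^0=Q+d$ is $C^1$ while $\partial L/\partial x^0\equiv 0$. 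You argue globally: differentiating $L\circ\psi^c=L$ fibrewise and using $\partial_x\psi(t,x)[K_x]=K_{\psi(t,x)}$ gives $N\circ\psi^c=N$, hence $K^c(N)=0$; combined with $K^c(d)=K(d)=0$ this yields $K^c(Q)=(\mathcal L_KQ)(v)=0$, then $K\big(Q(K)\big)=(\mathcal L_KQ)(K)+Q([K,K])=0$ by the Leibniz rule, and the quotient rule finishes. Your route never touches second derivatives of $L$ (only first derivatives of the $C^1$ function $N$), so it sidesteps the mixed-partials interchange entirely and is coordinate-free; the paper's route is shorter given the coordinate formulas \eqref{eq:lift-LocCoor} and \eqref{eq:Killing-LocCoord} already in place, and it displays exactly where the $C^1$ regularity of $Q$ and $d$ enters. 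The one step you should make explicit is that $t\mapsto\psi^c(t,x,v)$ is differentiable with velocity $K^c$, which holds because $K$ is $C^1$ (the differential of the flow satisfies the variational equation); the paper records precisely this fact in the remark following Assumption~\ref{ass:L}, so the chain rule you apply to the $C^1$ function $N$ is legitimate.
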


\begin{proof}
	Statement \eqref{OKL1} comes immediately from \eqref{L}
	and Assumption~\ref{ass:L}-\eqref{c1}.
	Let us prove \eqref{rewriteLc}. 	Let $x \in M$ be a given point and 
	let $l\colon \mathbb{R} \to \mathbb{R}$ be defined as
	\[
		l(\alpha) = L(x,\alpha K) - L(x,0).
	\]
	Hence, $l(0) = 0$ and, by Assumption~\ref{ass:L}-\eqref{linear},
	we obtain
	\[
		l'(\alpha) = \partial_v L(x,\alpha K)[K] = \alpha Q_x(K) + d(x).
	\]
	As a consequence, the function $l$ is equal to
	\[
		l(\alpha) = \frac{\alpha^2}{2}Q_x(K)+\alpha d(x).
	\]
	Therefore, noticing that $Q_x(K) = 2\big( l(1)-d(x))$,
	we obtain \eqref{QK}.
	Now	\eqref{menoQ} is a simple consequence of \eqref{L} and \eqref{QK}.

	Let us prove \eqref{LflowK}. 
	From \eqref{L} it is enough to prove that $Q$ and $Q(K)$ are invariant by
	the flow of $K^c$ and $K$, respectively.
	Let us consider $Q$ as a function on $TM$, i.e. $Q(x,v):=Q(v)$,
	thus we have to show that $K^c(Q)=0$.
	By \eqref{negativeQK}, $K_x\neq 0$ for all $x\in M$,
	thus for each $\bar x\in M$ we can take a neighborhood $U$ of $\bar x$
	and a coordinate system $(x^0, x^1, \ldots, x^m)$ defined in $U$
	such that $\frac{\partial}{\partial x^0}=K|_U$.
	Therefore, in such a coordinate system,
	\[
		Q(x, v)=\frac{\partial L}{\partial v^h}(x,v)K^h-d(x)
		=\frac{\partial L}{\partial v^0}(x,v)-d(x).
	\]
	Since $Q$ and $d$ are $C^1$, we know that 
	$\frac{\partial L}{\partial v^0}$ admits continuous partial derivatives
	w.r.t. the coordinates $(x^0,x^1,\ldots, x^m, v^0,v^1,\ldots,v^m)$ in $TU$.
	Notice also that, from \eqref{eq:Killing-LocCoord}, $K^c(L)=0$ is
	equivalent to $\frac{\partial L}{\partial x^0}(x,v)=0$.
	Being then a constant function, $\frac{\partial L}{\partial x^0}$ admits
	zero partial derivatives w.r.t. the coordinates $(x^0,x^1,\ldots, x^m,
	v^0,v^1,\ldots,v^m)$ as well.
	As $d$ is invariant by the flow of $K$, we have $\frac{\partial d}{\partial x^0}=0$ on $U$. Thus, from \eqref{eq:lift-LocCoor},
	we then get
	\[
		K^c(Q)(x,v)=\frac{\partial^2 L}{\partial x^0\partial v^0}(x,v)
		=\frac{\partial^2 L}{\partial v^0\partial x^0}(x,v)=0.
	\] 
	Since $Q(x,K)=\frac{\partial L}{\partial v^0}\big(x,(1,0,\ldots,0)\big)$,
	we also have
	\[
		K^c\big(Q(x,K)\big)=K\big(Q(x,K)\big)
		=\frac{\partial^2L}{\partial x^0\partial v^0}\big(x,(1,0,\ldots,0)\big)=0.
	\] 
\end{proof}
\begin{remark}
From \eqref{Nc} and \eqref{LflowK} in Proposition~\ref{prop:Lproperties},
we have that, like $L$, $L_c$ has affine Noether charge as well.
\end{remark}

Recalling Remark~\ref{atlas},
the following assumption ensures some 
growth conditions on $L_c$, often
used in critical point theory for the action functional of a Lagrangian
(see, e.g., \cite{Abbondandolo2007,Benci86}),
and its pointwise strong convexity.
\begin{assumption}\label{ass:L1}
	The Lagrangian $L_c\colon TM \to \mathbb{R}$,
	defined as in \eqref{L},
	satisfies the following assumptions:
	\begin{list1}
	\item
		there exists a continuous function
		$C\colon M\to(0,+\infty)$ 
		such that for all $(x,v)\in TM$,
		the following inequalities hold:
		\begin{align}
			\label{pinched}
			L_c(x,v)
				&\leq C(x)\big(\norm{v}^2+1\big);\\
				\label{partialxpinched}
			\|\partial_x L_c(x,v)\| &\leq C(x)\big(\norm{v}^2+1\big);	\\
			\label{partialvpinched}
			\|\partial_v L_c(x,v)\| &\leq C(x)\big(\norm{v}+1\big); 
		\end{align}
	\item		\label{monotonehyp}
		there exists a continuous function $\lambda\colon M \to (0,+\infty)$ such that 
		for each $x\in M$ and for all $v_1, v_2\in T_xM$,
			the following inequality holds:
		\begin{equation}
			\label{monotone}
			\big(\partial_vL_c(x,v_2)-\partial_vL_c(x,v_1)\big)[v_2-v_1]
			\geq \lambda(x)\norm{v_2 - v_1}^2;
		\end{equation}
	\end{list1}
\end{assumption}

\begin{remark}
	\label{rem:quadgrowth}
	We notice that from \eqref{monotone}
	and \eqref{Nc}
	we obtain 
	\begin{align*}
		Q_x(K) &= Q_x(K)- Q_x(0) \\
		&= \big(\partial_vL_c(x,0) - \partial_vL_c(x,K)\big)[K]
		\leq -\lambda (x)\norm{K}^2.
	\end{align*}
	Moreover, for all $(x,v)\in TM$ we have 
	\begin{multline*}
		L_c(x,v)-L_c(x,0)=
		\inte\frac{\de}{\de s}L_c(x,sv)\de s=
		\inte\partial_vL_c(x,sv)[v]\de s\\ =
		\inte\frac{1}{s}\bigg(\partial_vL_c(x,sv)[sv]-\partial_vL_c(x,0)[sv]\bigg)\de s
		+\partial_vL_c(x,0)[v] \\
		\ge \frac{1}{2}\lambda(x)\|v\|^2-\|\partial_vL_c(x,0)\|\|v\|.
	\end{multline*}
	Thus, $L_c$ satisfies the growth condition
	\[
		L_c(x,v)\geq L_c(x,0)-
		\frac{1}{\lambda(x)}\|\partial_vL_c(x,0)\|^2+\frac{\lambda(x)}{4}\norm{v}^2,
	\]
	and since $L_c(x,0)=L(x,0)$ and $\partial_vL_c(x,0)=\partial_vL(x,0)$ we get 
	\begin{equation}
		\label{quadgrowth}
		L_c(x,v)\geq L(x,0)
		-\frac{1}{\lambda(x)}\|\partial_vL(x,0)\|^2+\frac{\lambda(x)}{4}\norm{v}^2.
	\end{equation}

\end{remark}

The next and final assumption is needed to get a compactness condition 
on the sublevels of the reduced action functional
(see Lemma~\ref{lem:zH1bounded-Jc})
and then in the proof of the Palais-Smale condition for the same functional.
\begin{assumption} 
	\label{ass:bounds}
	There exist four  constants, $c_1,c_2,c_3, k_1, k_2$
	such that, for all $x \in M$, the following inequalities hold:
	\begin{align}
			&0<c_1\leq 
			\lambda (x),\label{eq:boundC1}\\
			\label{eq:boundC2}
			&L(x,0)\geq c_2\quad \text{and}\quad  \|\partial_vL(x,0)\|\leq c_3,\\
			\label{eq:boundNoether}
			&0<k_1\leq -Q_x(K),\\
			\label{dbounded}
			&|d(x)|\leq k_2.
	\end{align}
\end{assumption}

\section{Some classes of examples}
\label{sec:examples}
In this section we present various type of Lagrangians that satisfy
Assumptions~\ref{ass:L}--\ref{ass:bounds}.
We start with a generalization of the
Lorentz-Finsler Lagrangians studied in \cite{caponio2018}.
\begin{example}\label{ex:general-ind-Lagrangian}
	Let $S$ be a smooth $m$--dimensional manifold and $M = S \times \mathbb{R}$. 
	Let $g_S$ be a complete auxiliary Riemannian metric on $S$,
	whose associated norm is denoted by $\norm{\cdot}_S$,
	and let $g$ be the product metric $g=g_S\oplus dt^2$.
	Let 
	$L\colon TM \to \mathbb{R}$ be a Lagrangian on $M$ defined as 
	\begin{equation}
		\label{eq:gen-L}
		L\big((x,t),(\nu,\tau)\big)=
		L_{0}(x,\nu) + 2\big(\omega(\nu)+d(x)/2\big)\tau - \beta(x)\tau^2,
	\end{equation}
	where
	\begin{list1}
	\item\label{L0} $L_0\colon TS \to \mathbb{R}$ belongs to 
		$C^1(TS)$
		and 
		there exists a  continuous positive function
		$\ell: S \to (0,+\infty)$ such that
		\begin{align}
			\label{eq:genL-pinched}
			L_0(x,\nu)
				&\leq \ell(x)\big(\norm{\nu}_S^2+1\big);\\
				\label{eq:genL-partialxpinched}
			\|\partial_x L_0(x,\nu)\|_S &\leq \ell(x)\big(\norm{\nu}_S^2+1\big);	\\
			\label{eq:genL-partialvpinched}
			\|\partial_\nu L_0(x,\nu)\|_S &\leq \ell(x)\big(\norm{\nu}_S+1\big);
		\end{align}
	\item\label{L0convex} $L_0$ is pointwise strongly convex,
		i.e. there exists a continuous function
		$\lambda_0\colon S\to (0,+\infty)$
		such that, for all $x\in S$ and all $\nu_1,\nu_2\in T_xS$,
		\eqref{monotone} holds
		with $L_c$ replaced by $L_0$, $\lambda$ by $\lambda_0$ and
		$\|\cdot\|$ by $\|\cdot\|_S$;
	\item\label{omegabeta} $\omega$ is a $C^1$ one-form on $S$,
		$d\colon S\to \mathbb{R}$ is a $C^1$ function and
		$\beta \colon S \to (0,+\infty)$ is a $C^1$ positive function.
	\end{list1}	
	In this case, the field $K=\partial_t\equiv (0,1)$
	is an infinitesimal symmetry of $L$ and	
$d$ is invariant by the flow of $K$, because it is a function on $S$.
	Notice that
	if $L_0$ is the square of a Riemannian norm on $S$ and $d=0$
	then $L$ is the quadratic form associated with the
	Lorentzian metric of a standard stationary spacetime
	(see, e.g., \cite{GiannoniMasiello1991}).
	Moreover, if $L_0$ is the square of the norm of a Riemannian metric
	plus a one-form	
	$\omega_0$
 on $S$,
	then they include electromagnetic type Lagrangians
	on a standard stationary Lorentzian manifold
	with an exact electromagnetic field on $S\times\R$
	having a potential one-form $\omega_0\oplus d(x)\de t$,
	see Remark~\ref{FV} below.
\end{example}

In the next result we show that $L$ defined as in \eqref{eq:gen-L}
satisfies Assumptions~\ref{ass:L}-\ref{ass:L1}
and we give some further conditions ensuring that
it also satisfies Assumption~\ref{ass:bounds}.

\begin{proposition}
	A Lagrangian $L$ defined as in \eqref{eq:gen-L},
	such that \eqref{L0}--\eqref{omegabeta} above hold, satisfies
	Assumptions \ref{ass:L} and \ref{ass:L1}. 
	Moreover, if there exist some constants
	$b, \ell_1, \ell_2, \ell_3, \ell_4$,
	such that
	$\beta(x) \ge b>0$, $\lambda_0(x)\geq  \ell_1>0$,
	$L_0(x,0)\geq \ell_2$,
	$\|\partial_vL_0 (x,0)\|\leq \ell_3$ and $|d(x)|\leq \ell_4$, for every $x\in S$,
	then $L$ satisfies Assumption \ref{ass:bounds}.
\end{proposition}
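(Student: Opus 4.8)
The plan is to verify Assumptions~\ref{ass:L}, \ref{ass:L1} and \ref{ass:bounds} in turn, exploiting that the natural candidate for the infinitesimal symmetry is $K=\partial_t$ and that $L$ in \eqref{eq:gen-L} does not depend on the $\R$-coordinate $t$. First I would settle Assumption~\ref{ass:L}. Part~\eqref{c1} is immediate, since $L$ is built from $L_0\in C^1(TS)$ and the $C^1$ objects $\omega,d,\beta$ by algebraic operations. For part~\eqref{linear} I take $K=\partial_t\equiv(0,1)$; the flow of $K$ is the translation $(x,t)\mapsto(x,t+s)$, whose complete lift fixes the $(\nu,\tau)$-fibre, so the $t$-independence of $L$ yields invariance. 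The Noether charge is obtained by differentiating \eqref{eq:gen-L} in $\tau$:
\[
\partial_v L\big((x,t),(\nu,\tau)\big)[(0,1)]=2\omega(\nu)-2\beta(x)\tau+d(x),
\]
so \eqref{noether} holds with the $C^1$ one-form $Q_{(x,t)}(\nu,\tau):=2\omega_x(\nu)-2\beta(x)\tau$ and the given $d$ pulled back to $M$. Part~\eqref{negativeQKhyp} then follows at once: $d$ is $t$-independent hence $K$-invariant, and $Q(K)=-2\beta(x)<0$.

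Next comes Assumption~\ref{ass:L1}, which needs the auxiliary Lagrangian $L_c$ from \eqref{L}. Since $Q(K)=-2\beta$, a direct substitution followed by a completion of the square gives
\[
L_c(x,v)=L_0(x,\nu)+\frac{\omega(\nu)^2}{\beta(x)}
+\beta(x)\Big(\tau-\tfrac{\omega(\nu)}{\beta(x)}\Big)^2+d(x)\tau,
\]
from which I would read off every required estimate. The growth conditions \eqref{pinched}--\eqref{partialvpinched} follow from \eqref{eq:genL-pinched}--\eqref{eq:genL-partialvpinched} for $L_0$ together with the fact that the remaining terms are polynomials of degree at most two in $(\nu,\tau)$ with continuous $x$-coefficients; here I would use $|\omega(\nu)|\le\norm{\omega}\,\norm{\nu}_S$ and the product structure $\norm{v}^2=\norm{\nu}_S^2+\tau^2$. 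The crux is strong convexity~\eqref{monotone}: writing $(\Delta\nu,\Delta\tau):=v_2-v_1$ and using that $\omega$ is linear, the difference of vertical derivatives reduces, after the $d(x)\tau$-term cancels, to
\[
\big(\partial_\nu L_0(x,\nu_2)-\partial_\nu L_0(x,\nu_1)\big)[\Delta\nu]
+\frac{4}{\beta(x)}\omega(\Delta\nu)^2-4\,\omega(\Delta\nu)\Delta\tau+2\beta(x)\Delta\tau^2,
\]
where the first summand is $\ge\lambda_0(x)\norm{\Delta\nu}_S^2$ by \eqref{L0convex}. Completing the square in $\big(\omega(\Delta\nu),\Delta\tau\big)$ gives $\tfrac{4}{\beta}\omega(\Delta\nu)^2-4\,\omega(\Delta\nu)\Delta\tau+2\beta\Delta\tau^2=\tfrac{4}{\beta}\big(\omega(\Delta\nu)-\tfrac{\beta\Delta\tau}{2}\big)^2+\beta\Delta\tau^2\ge\beta(x)\Delta\tau^2$, so the whole expression is at least $\min\{\lambda_0(x),\beta(x)\}\norm{v_2-v_1}^2$ and \eqref{monotone} holds with $\lambda(x)=\min\{\lambda_0(x),\beta(x)\}$.

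This convexity step is the only genuine difficulty: the subtlety is recovering a lower bound that controls the full norm $\norm{v_2-v_1}^2$ rather than merely $\norm{\Delta\nu}_S^2$, which is exactly what the completion of the square in the $\tau$-direction provides; everything else is bookkeeping. Finally, Assumption~\ref{ass:bounds} follows by specialising the above to the uniform hypotheses. From $\lambda(x)=\min\{\lambda_0,\beta\}$ I obtain \eqref{eq:boundC1} with $c_1=\min\{\ell_1,b\}$; evaluating at $v=0$ gives $L(x,0)=L_0(x,0)\ge\ell_2$ and $\partial_v L(x,0)[(\mu,\sigma)]=\partial_\nu L_0(x,0)[\mu]+d(x)\sigma$, whence, using again the product metric, \eqref{eq:boundC2} holds with $c_2=\ell_2$ and $c_3=\sqrt{\ell_3^2+\ell_4^2}$; lastly $-Q(K)=2\beta\ge 2b$ yields \eqref{eq:boundNoether} with $k_1=2b$, and $|d|\le\ell_4$ is \eqref{dbounded} with $k_2=\ell_4$.
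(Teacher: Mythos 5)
Your proof is correct and takes essentially the same route as the paper's: take $K=\partial_t$, read off $Q=2(\omega-\beta\,\de t)$ and $d$ from the Noether charge, compute $L_c$ by completing the square, obtain strong convexity with $\lambda=\min\{\lambda_0,\beta\}$, and then specialise to the uniform constants. You are in fact slightly more careful than the paper on two minor points: the coefficient $d(x)\tau$ in your formula for $L_c$ is the correct one (the paper's \eqref{eq:genL1} shows $\tfrac{d(x)}{2}\tau$, a typo, while its subsequent derivative computation uses $d(x)\tau_1$), and your $c_3=\sqrt{\ell_3^2+\ell_4^2}$ correctly accounts for the $d(x)\,\de t$ component of $\partial_vL\big((x,t),0\big)$, which the paper drops when it asserts $\partial_{(\nu,\tau)}L\big((x,t),0\big)=\partial_\nu L_0(x,0)$ and takes $c_3=\ell_3$.
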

\begin{proof}
	As remarked above,
	the vector field $\partial_t\equiv(0,1)$ is an infinitesimal symmetry for $L$;
	moreover, since by hypotheses $L_0$, $\omega$ and $\beta$ are of class $C^1$,
	$L \in C^1(TM)$ as well.
	A direct computation shows that
	\begin{equation}
		\label{noetherxample} 
		\partial_vL\big((x,t),\cdot\big)[(0,1)]
		= 2\big(\omega_x- \beta(x)\diff t\big)+ d(x)
	\end{equation}
	which is an affine function  on $TM$ that we denote by $N$.
	Let $Q:=2(\omega-\beta\de t)$, hence
	\begin{equation}
		\label{eq:estimateQ-mainexample}
		Q(K) = Q\big((0,1)\big)
		= -2\beta < 0,
	\end{equation}
	and thus the conditions in Assumption~\ref{ass:L} are satisfied.
	Using \eqref{L} with \eqref{eq:estimateQ-mainexample},
	we see that $L_c\colon TM \to \mathbb{R}$ 
	is given by
	\begin{multline}
		\label{eq:genL1}
		L_c((x,t),(\nu,\tau))=
		L_0(x,\nu) +
		\left( \frac{1}{\sqrt{\beta(x)}}\omega(\nu)- \sqrt{\beta(x)}\tau \right)^2\\
		+\frac{1}{\beta(x)}\omega^2(\nu) +\frac{d(x)}{2}\tau.
	\end{multline}
	Let us show that $L_c$ satisfies
	\eqref{pinched}, \eqref{partialxpinched} and \eqref{partialvpinched}.
		By \eqref{eq:genL-pinched} we have 
	\begin{multline*}
		L_c((x,t),(\nu,\tau)) \le
		\ell(x)(\norm{\nu}^2_S + 1) 
		\\+ \frac{2\omega^2(\nu)}{\beta(x)}
		+ 2\beta(x)\tau^2
		+ \frac{\omega^2(\nu)}{\beta(x)}+\frac{d^2(x)}{2}+\frac{\tau^2}{2},
	\end{multline*}
	so setting
	\[
		C((x,t))\equiv C(x) = \max\left\{
			\ell(x),
			\frac{3\norm{\omega_x}_S^2}{\beta(x)},2\beta(x)+\frac{1}{2},
			\frac{d^2(x)}{2}
		\right\}
	\]
	\eqref{pinched} holds.
	Let us compute  $\partial_{(x,t)}L_c$:
	\begin{multline*}
		\partial_{(x,t)} L_c\big((x,t),(\nu,\tau)\big)[\xi,\zeta]
		= 
		\partial_x L_0(x,\nu)[\xi]
		- 2\partial_x\omega(\xi,\nu)\tau
		+ \de\beta(\xi)\tau^2\\
		+ \frac{4}{\beta(x)}\omega(\nu)\partial_x\omega(\xi, \nu)
		- \frac{2}{\beta^2(x)}\omega^2(\nu)\de\beta(\xi)+\de d(\xi)\tau,
	\end{multline*}
	Hence, 
	\begin{multline*}
		\norm{\partial_{(x,t)} L_c\big((x,t),(\nu,\tau)\big)}
		\le \norm{\partial_x L_0(x,\nu)}
		+ 2\norm{(\partial_x\omega)_x}_S\norm{\nu}_S\abs{\tau}\\
		+ \norm{(\de\beta)_x}_S\abs{\tau}^2
		+ \frac{4}{\beta(x)}\norm{(\partial_x\omega)_x}_S\norm{\omega_x}_S\norm{\nu}_S^2\\
		+ \frac{2}{\beta^2(x)}\norm{\omega_x}_S^2\norm{(\de \beta)_x}_S\norm{\nu}_S^2+\|(\de d)_x\||\tau|.
	\end{multline*}
	By \eqref{eq:genL-partialxpinched} and recalling that 
	$\norm{(\nu,\tau)}^2 = \norm{\nu}_S^2 + \abs{\tau}^2$,
	we infer the existence of a function $C\colon M \to (0,+\infty)$
	such that \eqref{partialxpinched} holds.
	Similarly, using \eqref{eq:genL-partialvpinched}
	we obtain \eqref{partialvpinched}.

	Let us show that $L_c$ satisfies \eqref{monotone}.
	From \eqref{eq:genL1} we have
	\begin{multline*}
		\partial_{(\nu,\tau)} L_c\big((x,t),(\nu,\tau)\big)[(\nu_1,\tau_1)]
		=\partial_vL_0(x,\nu)[\nu_1]\\
		+2	\left( \frac{1}{\sqrt{\beta(x)}}\omega(\nu)- \sqrt{\beta(x)}\tau \right)
		\left( \frac{1}{\sqrt{\beta(x)}}\omega(\nu_1)- \sqrt{\beta(x)}\tau_1 \right)\\
		+\frac{2}{\beta(x)}\omega(\nu)\omega(\nu_1)+d(x)\tau_1,
	\end{multline*}
	hence using that $L_0$ is pointwise strongly convex we get 
	\begin{multline*}
		\Big(\partial_{(\nu,\tau)}L_c\big((x,t),(\nu_2,\tau_2)\big)-\partial_{(\nu,\tau)}L_c\big((x,t),(\nu_1,\tau_1)\big)\Big)[(\nu_2-\nu_1,\tau_2-\tau_1)]\\
		\geq \lambda_0(x)\|\nu_2-\nu_1\|^2_S+\frac{4}{\beta(x)}\omega_x^2(\nu_2-\nu_1)\\
		\phantom{some space}
		+2\beta(x)(\tau_2-\tau_1)^2
		-4(\tau_2-\tau_1)\omega_x(\nu_2-\nu_1)\\
		\geq \lambda_0(x)\|\nu_2-\nu_1\|^2_S+\beta(x)(\tau_2-\tau_1)^2,
	\end{multline*}
	thus \eqref{monotone} holds by taking 
	\begin{equation}
		\label{lambdax}
		\lambda(x):=\min\{\lambda_0(x),\beta(x)\}.
	\end{equation}

	It remains to prove Assumption \ref{ass:bounds}.
	Of course, \eqref{dbounded} is trivially satisfied and
	if $\beta\ge b > 0$
	then by \eqref{eq:estimateQ-mainexample} we obtain
	\eqref{eq:boundNoether}.
	By \eqref{lambdax},
	we also have \eqref{eq:boundC1} with $c_1=\min\{\ell_1,b\}$.
	As $L\big((x,t), 0\big)=L_0(x,0)$ and
	$\partial_{(\nu,\tau)}L\big((x,t),0\big)=\partial_\nu L_0(x,0)$,
	\eqref{eq:boundC2} is satisfied as well with $c_2=\ell_2$ and $c_3=\ell_3$.
\end{proof}

\begin{remark}\label{FV}
	A special case of a Lagrangian 
	in Example~\ref{ex:general-ind-Lagrangian} 
	that satisfies our assumptions
	is given by \eqref{eq:gen-L} with
	\[
		L_0(x,\nu) = F^{2}(x,\nu) +\omega_0(\nu) +V(x),
	\]
	where 
	$V\colon S \to \mathbb{R}$ is a $C^1$ function bounded from below,
	 $\omega_0$ is a $C^1$ one-form on $S$,
	such that $\sup_{x\in S}\|(\omega_0)_x\|_S<+\infty$, and 		
	$F\colon TS \to [0,+\infty)$ is a $C^1$ Finsler metric on $S$,
	i.e it is a non-negative,
	$C^1$ Lagrangian on $TS$, positively homogeneous of degree $1$ w.r.t. $\nu$,
	such that $F^2$ is pointwise strongly convex,
	i.e. it satisfies \eqref{monotone} on $TS$.
	We remark that usually in the definition of a Finsler metric it is assumed that
	$F^2\in C^2(TS\setminus 0)$
	(where $0$ denotes the zero section of $TS$) and its vertical Hessian,
	the so-called fundamental tensor $g_F$,
	\[		
		g_F(x,v)[u,w]:=
		\frac 1 2\frac{\partial^2 F^2}{\partial s \partial t}(x, v + tu + sw)
		\bigg|_{(s,t) = (0,0)}
	\]
	for all $(x,v)\in TM\setminus 0$ and all $u,w \in T_xM$,
	is assumed to be positively homogeneous of degree $0$ in $v$
	and  positive definite for all $(x,v)\in TM\setminus 0$ (see, e.g., \cite{BaChSh00}).
	Inequality \eqref{monotone} for $F^2$ on $TS$
	follows by the mean value theorem applied to the function
	$\nu\in TS\mapsto \partial_vF^2(x,v)[\nu_2-\nu_1]$, when $\nu_1, \nu_2$ are
	not collinear vectors with opposite directions or when one of them is $0$; 
	for collinear vectors with opposite directions it follows  by a continuity
	argument.
	Notice that  $\lambda_0(x)$ in \eqref{monotone} for $F^2$ is then
	equal to 
	\[
		\lambda_0(x)=2\min_{\nu\in T_x S\setminus\{0\}}
		\left(\min_{u\in T_xS\setminus\{0\}} g_F(x,\nu)\Big[\frac{u}{\|u\|_S},
		\frac{u}{\|u\|_S}\Big]\right),
	\]
	and that	 
	\eqref{eq:genL-pinched}--\eqref{eq:genL-partialvpinched}
	are ensured by	the  homogeneity of degree $2$ of $F^2$ w.r.t. $\nu$.
\end{remark}

We notice  that Lagrangians $L$ satisfying
Assumptions~\ref{ass:L}--\ref{ass:L1} are generated by Lagrangians $L_b$
satisfying \eqref{pinched}---\eqref{monotone} and
admitting a vector field $K$ as an infinitesimal symmetry
with affine  Noether charge $N_b=Q_b+d$ such that $Q_b(K)>0$.
Indeed, arguing as in Proposition~\ref{prop:Lproperties}-\eqref{LflowK},
$L:=L_b-\frac{Q_b^2}{Q_b(K)}$ admits $K$ as infinitesimal symmetry and its
Noether charge is 
\[
	N=N_b-2Q_b=-Q_b+d,
\]
hence $L_c=L_b$ and
then, of course, $L_c$ satisfies Assumption~\ref{ass:L1}.
This observation gives
rise to the following families of examples.

\begin{example}
	Let $M$ be a smooth $(m+1)$-dimensional manifold endowed with a complete
	Riemannian metric $g$ and
	$F$, $\omega_0$, $V$
	be respectively a Finsler metric, a one-form and a function on $M$,
	all of  $C^1$ class and invariant by the flow of a		nowhere vanishing
		vector field $K$ on $M$.
	Let us assume that the Noether charge associated with $F^2$ and $K$
	is a one-form of class $C^1$ on $M$.
		Let $L:TM\to\R$ be given by 
	\begin{equation}
		\label{FVQ}
		L=F^2+ \omega_0 +V-\frac{Q_F^2}{Q_F(K)},
	\end{equation}
	and $\lambda(x)$ be the positive continuous function in \eqref{monotone} for $F^2$ on $TM$.
\end{example}
\begin{proposition}
	Assume that there exist two constants $c_1>0$ and $a\geq 0$
	such that $\lambda(x)>c_1$, and $\|(\omega_0)_x\|\leq a$,
	for all $x\in M$,
	$V$ is bounded from below,
	$\inf_{x\in M}\|K_x\|>0$
	and $\sup_{x\in M}\|K_x\|<+\infty$. 
	Then $L$ in \eqref{FVQ} satisfies Assumptions~\ref{ass:L}--\ref{ass:bounds}.	
\end{proposition}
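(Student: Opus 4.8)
The plan is to exploit the observation preceding the example: set $L_b:=F^2+\omega_0+V$ and recognize $L$ in \eqref{FVQ} as $L_b-Q_b^2/Q_b(K)$ with $Q_b=Q_F$. First I would compute the Noether charge of $L_b$. Since $\omega_0(v)$ is linear in $v$ and $V$ is $v$-independent, we have $\partial_vL_b(x,v)[K]=\partial_vF^2(x,v)[K]+\omega_0(K)=Q_F(v)+\omega_0(K)$, which is affine with linear part $Q_b=Q_F$ and function part $d=\omega_0(K)$. By Euler's identity for the degree-two homogeneous $F^2$, $Q_F(K)=\partial_vF^2(x,K)[K]=2F^2(x,K)>0$, because $K$ is nowhere vanishing and $F$ is a Finsler metric; this is exactly the positivity $Q_b(K)>0$ needed to invoke the observation. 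It follows that $L_c=L_b$ and that the Noether charge of $L$ is $N=-Q_F+\omega_0(K)$, so that $Q=-Q_F$ and $d=\omega_0(K)$.

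Next I would verify Assumption~\ref{ass:L}. The $C^1$ regularity of $L$ follows from that of $F^2,\omega_0,V,Q_F$, together with $Q_F(K)=2F^2(x,K)>0$, which makes $Q_F^2/Q_F(K)$ of class $C^1$ on $TM$. That $K$ is an infinitesimal symmetry of $L$ follows, arguing as in the proof of Proposition~\ref{prop:Lproperties}-\eqref{LflowK}, from the $K$-invariance of $F^2$, $\omega_0$, $V$ and of $Q_F$, $Q_F(K)$. The charge is affine with $Q=-Q_F$ and $d=\omega_0(K)$; here $d$ is invariant by the flow of $K$ since both $\omega_0$ and $K$ are, and $Q(K)=-2F^2(x,K)<0$, which is \eqref{negativeQK}.

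For Assumption~\ref{ass:L1}, since $L_c=L_b$, it suffices to check the growth and convexity of $L_b=F^2+\omega_0+V$. The inequalities \eqref{pinched}--\eqref{partialvpinched} for $F^2$ are ensured by its degree-two homogeneity as in Remark~\ref{FV}; the contribution of $\omega_0$ is linear, hence controlled by $a\norm{v}$ and $\norm{(\omega_0)_x}\le a$, and $V$ is $v$-independent with continuous differential, so the three inequalities hold for $L_b$ with a suitable continuous $C(x)$. For \eqref{monotone} I would note that the vertical derivative annihilates the affine part $\omega_0+V$, so $\big(\partial_vL_b(x,v_2)-\partial_vL_b(x,v_1)\big)[v_2-v_1]$ reduces to the corresponding expression for $F^2$, which is $\ge\lambda(x)\norm{v_2-v_1}^2$ by strong convexity of $F^2$; thus \eqref{monotone} holds with the same $\lambda$.

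Finally, Assumption~\ref{ass:bounds}. Condition \eqref{eq:boundC1} is the hypothesis $\lambda>c_1$. Since $F^2(x,0)=0$, $\partial_vF^2(x,0)=0$, $\omega_0(0)=0$ and $Q_F(0)=0$, one gets $L(x,0)=V(x)\ge\inf V=:c_2$ and $\partial_vL(x,0)=\omega_0$, whence $\norm{\partial_vL(x,0)}\le a=:c_3$, giving \eqref{eq:boundC2}; and \eqref{dbounded} follows from $|d(x)|=|\omega_0(K_x)|\le a\sup_x\norm{K_x}=:k_2<+\infty$. I expect the delicate point to be \eqref{eq:boundNoether}, since the plain value $-Q(K)=2F^2(x,K)$ need not be uniformly bounded away from zero when $M$ is noncompact. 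The key idea to overcome this is to use the convexity estimate of Remark~\ref{rem:quadgrowth}, which in the present setting reads $-Q(K)=Q_F(K)\ge\lambda(x)\norm{K_x}^2\ge c_1\big(\inf_x\norm{K_x}\big)^2=:k_1>0$, the last inequality using $\lambda>c_1$ and $\inf_x\norm{K_x}>0$.
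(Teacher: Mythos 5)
Your proposal is correct and follows essentially the same route as the paper's proof: identifying $L_b=F^2+\omega_0+V$ so that $L_c=L_b$ and $Q=-Q_F$, checking the $K$-invariance of $d=\omega_0(K)$, deducing Assumption~\ref{ass:L1} from the homogeneity and pointwise strong convexity of $F^2$, and obtaining \eqref{eq:boundNoether} from the strong-convexity estimate $-Q_x(K)=2F^2(x,K)\ge c_1\|K_x\|^2\ge c_1\inf_{x\in M}\|K_x\|^2>0$, exactly as in the paper.
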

\begin{proof}
	Set $L_b=F^2+\omega_0+V$,
	then 	$L_b$ admits $K$ as an infinitesimal symmetry with affine Noether charge 
	\[
		N_b=N_F+\omega_0(K).
	\]
	Hence, the same holds for $L$ and the one-form 
	appearing in its Noether charge is $Q=-Q_F$.
	Since $\omega_0$ is invariant by the flow of $K$,
	the Lie derivative $\mathcal L_K\omega_0$ vanishes.
	In particular, $0=\mathcal L_K\omega_0(K)=K\big(\omega_0(K)\big)$,
	i.e. $\omega_0(K)$ is invariant by the flow of $K$.
	We also notice that $(Q_F)_x(K)=2F^2(x,K)>0$ for all $x\in M$ since,
	by assumption, $K_x\neq 0$ for all $x\in M$.
	Thus $L$ satisfies Assumption~\ref{ass:L}.
	As $L_c=L_b$, it  satisfies \eqref{pinched}--\eqref{partialvpinched}
	because  $F^2$ is positively homogeneous of degree two; moreover, it satisfies
	also \eqref{monotone} as $F^2$ is pointwise strongly convex.
	Since $L(x,0)=V(x)$, and $\partial_vL(x,0)=\omega_0$,
	\eqref{eq:boundC2} holds; being 
	\[
		-Q_x(K)=(Q_F)_x(K)= 2F^2(x, K)
		\ge c_1\|K\|_x^2\geq c_1\inf_{x\in M}\|K\|^2_x>0,
	\] 
	and $d=\omega_0(K)$,
	\eqref{eq:boundNoether} and \eqref{dbounded} hold as well.
\end{proof}

The next class of examples involves Lorentz-Finsler metrics $L_F$ as defined by
J. K. Beem in \cite{Beem70}
(see also \cite{GaPiVi12,Minguz15,Perlic06}). 

\begin{example}\label{beemxample}
	Let $M$ be a smooth manifold of dimension $m+1$, and $g$ an auxiliary complete Riemannian metric on $M$. Let   $L_F:TM\to\mathbb R$ be a Lagrangian which satisfies the following conditions: 
	\begin{list1}
	\item\label{LF-1}
		$L_F \in C^1(TM)\cap C^{2}(TM \setminus 0)$, where $0$ denotes the zero section of $TM$;
	\item
		$L_F(x,\lambda v)=\lambda^2 L_F(x,v)$ for all $v \in TM$ and all $\lambda>0$;
	\item \label{LF-3}
		for any $(x,v) \in TM \setminus 0$,
		the vertical Hessian of $L_F$, i.e. the symmetric matrix
		\[
			(g_F)_{\alpha \beta}(x,v)
			:= \frac{\partial^2 L_F}{\partial v^\alpha \partial v^\beta}(x,v),
			\quad \alpha,\beta=0,1,\ldots,m,
		\]
		is non-degenerate with index $1$.	
	\end{list1}
\end{example}
Let us assume that $L_F$ admits a nowhere vanishing vector field $K$
as an infinitesimal symmetry and that its Noether charge
is equal to
$N_{L_F}=Q_{L_F}$, where $Q_{L_F}$ 
is a $C^1$ one-form such that $Q_{L_F}(K) < 0$.
Let $L=L_F+\omega_1+V$ where $\omega_1$ and $V$ are, respectively,
a $C^1$ one-form on $M$, such that
$\sup_{x\in M}\|(\omega_1)_x\|<+\infty$,
and a $C^1$ function, bounded from below   on $M$.
We assume that both $\omega_1$ and $V$ are
invariant by the flow of $K$.
Then the Noether charge of $L$ is $N=N_{L_F}+\omega_1(K)=Q_{L_F}+\omega_1(K)$.
Thus, $L$ satisfies Assumption~\ref{ass:L}.
Let $Q:=Q_{L_F}$; so $L_c$ is  equal to $L_c= L-\frac{Q^2}{Q(K)}$. 
\begin{proposition}
	If conditions \eqref{LF-1}--\eqref{LF-3}
	of Example \ref{beemxample} hold,
	then $L_c$ satisfies Assumption~\ref{ass:L1}. 
\end{proposition}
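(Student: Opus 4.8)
The plan is to verify the three growth inequalities \eqref{pinched}--\eqref{partialvpinched} and the strong convexity \eqref{monotone} separately, reducing both to statements about the velocity-dependent part $\tilde{L}:=L_F-\frac{Q^2}{Q(K)}$ of $L_c$ (here $Q=Q_{L_F}$). My first step is an algebraic identity that is forced by the hypothesis that the Noether charge of $L_F$ is a one-form. Differentiating the relation $\partial_vL_F(x,v)[K]=Q(v)$, which is \emph{linear} in $v$, with respect to $v^\gamma$ on $TM\setminus 0$ yields $(g_F)_{\gamma\alpha}(x,v)K^\alpha=Q_\gamma(x)$; that is, $Q=g_F(x,v)[K,\cdot\,]$ is the $g_F(x,v)$-metric dual of $K$, is in particular independent of $v$, and $Q(K)=g_F(x,v)[K,K]$ equals the constant $Q_x(K)<0$. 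Since $g_F(x,v)$ has index $1$ by \eqref{LF-3}, this says precisely that $K$ is $g_F(x,v)$-timelike for every $v\neq 0$.

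For the growth conditions I would argue by homogeneity. As $L_F$ is positively homogeneous of degree $2$, the functions $L_F$ and $\partial_xL_F$ are homogeneous of degree $2$ in $v$ and $\partial_vL_F$ of degree $1$; being continuous on the fiberwise compact unit sphere bundle $\{\norm{u}=1\}$, their fiberwise maxima are continuous in $x$, so $|L_F|,\norm{\partial_xL_F}\le c(x)\norm{v}^2$ and $\norm{\partial_vL_F}\le c(x)\norm{v}$, the $+1$ slack absorbing the value at $v=0$ (where all three vanish). The term $\frac{Q^2}{Q(K)}$ is fiberwise a quadratic form in $v$ whose coefficients are $C^1$ in $x$ (using $Q\in C^1$ and $Q(K)<0$ continuous), and $\omega_1,V\in C^1$ make $\omega_1$, $V$, $\norm{\partial_x\omega_1}$ and $\norm{\partial_xV}$ continuous in $x$; summing all contributions and converting linear-in-$\norm{v}$ terms via Young's inequality produces a continuous $C\colon M\to(0,+\infty)$ for which \eqref{pinched}--\eqref{partialvpinched} hold.

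The heart of the matter is \eqref{monotone}, and the main obstacle is the failure of $L_F$ to be twice differentiable on the zero section, exactly the difficulty already met in Remark~\ref{FV}. Since $\omega_1$ and $V$ are affine in $v$, they cancel from the second difference, so it suffices to prove \eqref{monotone} for $\tilde{L}$. For $v\neq 0$ the function $\tilde{L}$ is $C^2$, with vertical Hessian
\[
	H(x,v)=g_F(x,v)-\frac{2}{Q(K)}\,Q\otimes Q,
\]
where, by the first step, $Q=g_F(x,v)[K,\cdot\,]$ and $Q(K)=g_F(x,v)[K,K]<0$; thus $H(x,v)$ is the Finsler analogue of the Wick rotation turning a Lorentzian form into a Riemannian one. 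Decomposing any $w\in T_xM$ as $w=w^\perp+\alpha K$ with $g_F(x,v)[K,w^\perp]=0$, a direct computation gives $H(x,v)[w,w]=g_F(x,v)[w^\perp,w^\perp]-\alpha^2\,g_F(x,v)[K,K]$; as $g_F(x,v)$ has index $1$ and $K$ is $g_F(x,v)$-timelike, both summands are nonnegative and vanish only when $w=0$, so $H(x,v)$ is positive definite. Because $g_F(x,v)$ is homogeneous of degree $0$ in $v$ while $Q$ and $Q(K)$ are independent of $v$, the form $H(x,v)$ is homogeneous of degree $0$, so $\lambda(x):=\min\{H(x,u)[w,w]:\norm{u}=\norm{w}=1\}$ is continuous and strictly positive on $M$, and $H(x,v)[w,w]\ge\lambda(x)\norm{w}^2$ for every $v\neq 0$ and every $w$.

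It then remains to integrate this pointwise bound. For $v_1,v_2$ whose connecting segment avoids the origin, I would write the second difference as $\inte H\big(x,v_1+s(v_2-v_1)\big)[v_2-v_1,v_2-v_1]\,\de s$ and apply the Hessian estimate to obtain \eqref{monotone} with the above $\lambda(x)$. The remaining pairs, namely those with $v_1,v_2$ collinear and oppositely oriented or with one of them equal to $0$, I would treat, as in Remark~\ref{FV}, by continuity: since $\tilde{L}\in C^1(TM)$, the map $\partial_v\tilde{L}$ is continuous on all of $TM$, so both sides of \eqref{monotone} are continuous in $(v_1,v_2)$ and the inequality extends from the dense set of admissible pairs to all of $T_xM\times T_xM$. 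This yields \eqref{monotone} for $L_c$ and completes the verification of Assumption~\ref{ass:L1}.
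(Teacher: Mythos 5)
Your proposal is correct and follows essentially the same route as the paper's proof: your first step (that $Q$ is the $g_F(x,v)$-dual of $K$, so $K$ is $g_F(x,v)$-timelike) is exactly the paper's computation \eqref{orto}, your orthogonal splitting $w=w^\perp+\alpha K$ is the paper's decomposition $T_xM=\ker Q\oplus[K]$ (identical, since $\ker Q=\ker g_F(x,v)[K,\cdot\,]$), and the positive definiteness of the vertical Hessian of $L_c$ on $TM\setminus 0$, followed by the mean-value-theorem-plus-continuity argument of Remark~\ref{FV} and homogeneity/continuity arguments for \eqref{pinched}--\eqref{partialvpinched}, mirrors the paper step by step. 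The only differences are presentational: you make the fiberwise minimum defining $\lambda(x)$ explicit and derive the growth bounds from the homogeneity of $L_F$, $\partial_xL_F$, $\partial_vL_F$ directly, whereas the paper obtains them from the degree-zero homogeneity of the vertical Hessian.
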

\begin{proof}
	Let us show that	$L_c$ admits vertical Hessian 
	at any $(x,v)\in TM\setminus 0$, which is a positive definite bilinear form on $T_xM$.
	We observe that for any $(x,v)\in TM\setminus 0$, we have
	\begin{align}
	\label{fiberHessL1}
	\partial_{vv}L_c(x,v)
			&=\partial_{vv}L(x,v)-\frac{2}{Q(K)}Q\otimes Q\nonumber\\
			&=\partial_{vv}L_F(x,v)-\frac{2}{Q(K)}Q\otimes Q
	\end{align}
	As for each $u\in T_x M$ we have
	\begin{multline}
		\label{orto}
		\partial_{vv}L(x,v)[K,u]=\frac{\partial^2 L}{\partial s \partial t}
		(x, v + tK + su)\bigg|_{(s,t) = (0,0)}\\
		=\frac{\partial \big(\partial_v L(x,v+su)[K]\big)}{\partial s }\bigg|_{s= 0}
		=\frac{\partial Q(v+su)}{\partial s }\bigg|_{s= 0}=Q(u),
	\end{multline}
	from \eqref{negativeQK} and \eqref{fiberHessL1},
	we get $\partial_{vv}L_c(x,v)[K,K]=-Q(K)>0$.
	Let $w\in \ker Q$.
	From \eqref{orto}, we have
	$\partial_{vv}L(x,v)[w,K]=0$,
	and since $\partial_{vv}L(x,v)$ has index $1$,
	we also have 
	\[\partial_{vv}L_c(x,v)[w,w]=\partial_{vv}L(x,v)[w,w]>0,\]
	hence $\partial_{vv}L_c(x,v)[\cdot,\cdot]$ 
	is positive definite.
	Reasoning as in the last part of Remark~\ref{FV},
	we deduce that \eqref{monotone} holds. 
	From \eqref{fiberHessL1},  since $\partial_{vv}L_F(x,v)-\frac{2}{Q(K)}Q\otimes Q$ is continuous on $TM\setminus 0$ and positively homogeneous of degree $0$ in $v$,  we deduce \eqref{pinched} 
	and \[C(x)=\max\big\{\Lambda(x)+1, V(x)+\|(\omega_1)_x\|^2\big\},\] where 
	\[\Lambda(x):=\max_{\substack{v\in T_xM,\|v\|=1\\ w\in T_xM}} \left(\frac 1 2\partial_{vv}L_F-\frac{Q\otimes Q}{Q(K)}\right)(x,v)\left[\frac{w}{\|w\|}, \frac{w}{\|w\|}\right].
	\]
	Up to redefine $C(x)$, \eqref{partialxpinched} and \eqref{partialvpinched} can be obtained analogously.
\end{proof}
In particular, Lagrangians in Example~\ref{beemxample} include the class of $C^2$ stationary Lorentzian metrics. We also want to consider the $C^1$ case. 
\begin{example}\label{statlor}
	Let $(M,g_L)$ be a Lorentzian manifold of dimension $m+1$ with $C^1$ metric
	tensor $g_L$. Let $K$ be a timelike Killing vector field for $g_L$, i.e. $K$
	is a Killing vector field such that $g_L(K,K)<0$. Then $(M,g_L)$ is called a
	{\em stationary Lorentzian manifold}. 
	Let $L(x,v):=g_L(v,v)$; we notice that $L\in C^1(TM)$ and
	$\partial_vL(x,v)[K]=(g_L)_x(\cdot,K)$, thus $Q(K)=2g_L(K,K)<0$.
	The Lagrangian $L_c$ is equal to
	\[L_c(x,v)= g_L(v,v)- \frac{2 g_L(K,v)^2}{g_L(K,K)}\]
	and then it is equal to the square of the norm of a Riemannian metric
	$g_R$ (as in \cite{giannoni1999}).
	Thus, Assumption~\ref{ass:L1} is satisfied as well (by using the same metric
	$g_R$ as auxiliary Riemannian metric $g$), provided that $g_R$ is complete
	with
	\[
		C(x)=\max\left\{2,
			(m+1)\max_{k\in\{0,\ldots,m\}}
			\Big(\max_{v\in T_x M\neq 0}
				\frac{\partial (g_R)_{ij}}{\partial x^k}(x)\frac{v^i}{\|v\|}\frac{v^j}{\|v\|}
			\Big)
		\right\}.
	\] 
	Finally, Assumption~\ref{ass:bounds} is satisfied provided that
	there exists a constant $k_1$ such that $-g_L(K,K)\geq k_1>0$. 
\end{example}

	The following example of Lagrangians are 
	the Lorentz-Finsler Lagrangians studied in \cite{JavSan20}
	and they can be included in the class of Example~\ref{beemxample}
	(see Proposition \ref{prop:JavSan-in-Beem}).

\begin{example}\label{JS}
	Let $M$ be a smooth manifold and 
	\begin{equation}
		\label{LFJS}
		L_F=F^2-\omega^2,
	\end{equation}
	where $F$ and $\omega$ are, respectively, a Finsler metric of class $C^1(TM)\cap C^2(TM\setminus 0)$ and a one-form of class $C^1$ both invariant 
	invariant by the flow of a nowhere vanishing vector
	field $K$ and such that		the Noether charge associated with $F^2$ and $K$
    is a $C^1$  one-form on $M$,  $N_F = Q_F$.
		Then, $L_F$ admits $K$ as an infinitesimal
	symmetry and 
	$N_{L_F}=Q_F-2\omega(K)\omega=:Q_{L_F}$.
	Notice that $(Q_{L_F})_x(K)=2\big(F^2(x, K)-\omega^2(K)\big)$. 
\end{example}

\begin{proposition}
	\label{prop:JavSan-in-Beem}
	Assume that $Q_{L_F}(K)<0$, then $\omega_x(K)\neq 0$ for all $x\in M$
	and $L_F$ in \eqref{LFJS} is a Lagrangian
	of the type in Example~\ref{beemxample}.
\end{proposition}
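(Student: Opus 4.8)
The statement has two parts: that $\omega_x(K)\neq 0$ everywhere, and that $L_F=F^2-\omega^2$ fulfils conditions \eqref{LF-1}--\eqref{LF-3} of Example~\ref{beemxample}. The first part is immediate: since $(Q_{L_F})_x(K)=2\big(F^2(x,K)-\omega^2(K)\big)<0$, we get $\omega^2(K)>F^2(x,K)\geq 0$, and as $K_x\neq 0$ forces $F^2(x,K)>0$, this yields $\omega^2(K)>0$, i.e. $\omega_x(K)\neq 0$. The homogeneity in \eqref{LF-1}--\eqref{LF-3} and the $C^1(TM)$ regularity are routine, being inherited from $F^2$ together with the fact that $\omega^2$ is fibrewise a quadratic form: $L_F\in C^1(TM)$, it is positively homogeneous of degree two, and on $TM\setminus 0$ the term $\omega^2$ contributes the continuous fibre Hessian $2\,\omega_x\otimes\omega_x$, which is what condition \eqref{LF-3} requires. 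I would therefore spend the real effort on \eqref{LF-3}.

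So the heart of the proof is the index-one and non-degeneracy of the vertical Hessian at every $(x,v)\in TM\setminus 0$. The plan is to first record the computation
\[
	\partial_{vv}L_F(x,v)=2\big(g_F(x,v)-\omega_x\otimes\omega_x\big),
\]
where $g_F$ is the fundamental tensor of $F$, positive definite on $TM\setminus 0$. Restricted to the hyperplane $\ker\omega_x$ this form equals $2g_F(x,v)$, hence is positive definite; therefore its index is at most one. It thus suffices to exhibit a single negative direction at each $(x,v)$, and the natural candidate is $K$ itself, since $\omega_x(K)\neq 0$ places $K$ off the hyperplane $\ker\omega_x$.

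The key step — and the one I expect to be the genuine obstacle, precisely because $g_F(x,v)$ really does depend on the direction $v$ when $F$ is non-Riemannian, whereas the hypothesis $Q_{L_F}(K)<0$ only controls the single direction $v=K$ — is to prove that $g_F(x,v)(K,K)=F^2(x,K)$ for \emph{every} $(x,v)\neq 0$, not merely for $v=K$. This is forced by the assumption that the Noether charge $N_F=\partial_vF^2(\cdot)[K]=Q_F$ is a one-form, i.e. linear in $v$: differentiating the identity $\partial_vF^2(x,v)[K]=Q_F(v)$ with respect to $v$ gives $2g_F(x,v)(K,\cdot)=\partial_{vv}F^2(x,v)[K,\cdot]=Q_F$, which is independent of $v$; evaluating at $v=K$ and invoking Euler's identity $g_F(x,K)(K,K)=F^2(x,K)$ yields $Q_F(K)=2F^2(x,K)$, whence $g_F(x,v)(K,K)=F^2(x,K)$ for all $v$. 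Consequently
\[
	\big(g_F(x,v)-\omega_x\otimes\omega_x\big)(K,K)=F^2(x,K)-\omega^2(K)=\tfrac12 (Q_{L_F})_x(K)<0
\]
at every $(x,v)\neq 0$, so $K$ is a negative direction there and, combined with the bound index $\leq 1$, the index is exactly one. Non-degeneracy then follows since the form is positive definite on the hyperplane $\ker\omega_x$ and negative on the transversal line $\R K$ (equivalently, the matrix determinant lemma gives $\det\partial_{vv}L_F=2^{m+1}\det g_F\,\big(1-g_F^*(x,v)(\omega,\omega)\big)\neq 0$, because $g_F^*(x,v)(\omega,\omega)\geq \omega^2(K)/F^2(x,K)>1$). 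This establishes \eqref{LF-3} and completes the identification of $L_F$ as a Lagrangian of Beem type.
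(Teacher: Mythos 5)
Your proof is correct and follows essentially the same route as the paper's: both arguments exploit the linearity of $Q_F$ in $v$ (i.e.\ differentiating $\partial_v F^2(x,v)[K]=Q_F(v)$ and using Euler's identity) to show that $\partial_{vv}F^2(x,v)[K,K]=Q_F(K)=2F^2(x,K)$ independently of $v$, and then split $T_xM=\ker\omega_x\oplus\mathbb{R}K$, with positivity on $\ker\omega_x$ and negativity on $K$ giving index $1$. The only (welcome) addition is that you spell out the non-degeneracy of the vertical Hessian, via the signature count and the determinant identity, which the paper leaves implicit.
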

\begin{proof}
	The non-trivial part of the statement is to prove \eqref{LF-3}
	in Example~\ref{beemxample}.
	For all $(x,v)\in TM\setminus 0$ we have:
	\begin{multline*}
		\partial_{vv}L_F(x,v)[K,K]
		= \partial_{vv}F^2(x,v)[K,K]-2\omega^2(K)\\
		= \partial_v\big(\partial_vF^2(x,v)[K]\big)[K] -2\omega^2(K)
		=\partial_v\big((Q_F)(v)\big)[K]-2\omega^2(K)\\
		=Q_F(K)-2\omega^2(K)=2\big(F^2(x, K)-\omega^2(K)\big)<0,
	\end{multline*}
	thus in particular we get that $\omega_x(K)\neq 0$, for all $x\in M$.
	Moreover for all $w\in \ker(\omega_x)$, $w \ne 0$,
	we have
	\begin{align*}
		\partial_{vv}L_F(x,v)[w,w]
		&=\partial_{vv}F^2(x,v)[w,w]-2\omega^2(w)\\
		&=\partial_{vv}F^2(x,v)[w,w]>0.
	\end{align*}
	Thus, being $K$ transversal to $\ker (\omega)$,
	we deduce that $\partial_{vv}L_F(x,v)$
	has index $1$ for all $(x,v)\in TM\setminus 0$.
\end{proof}

\section{The reduced manifold of paths and action}
\label{sec:problem-reduction}
Let $L:TM\to\R$ be a Lagrangian satisfying Assumptions~\ref{ass:L} and \ref{ass:L1}.
Recalling that $M$ is endowed with an auxiliary complete Riemannian metric $g$,
let us consider the set
\begin{multline*}
	W^{1,2}([0,1],M) :=
	\bigg\{
		z\colon [0,1] \to M:
		z \text{ is absolutely continuous and }\\
		\int_{0}^1 g(\dot{z},\dot{z})\de s < + \infty
	\bigg\},
\end{multline*}
and, for any two fixed points $p,q\in M$, its subset
\[
	\Omega_{p,q}^{1,2} :=
	\left\{z\in W^{1,2}([0,1],M): z(0) = p,\ z(1) = q
	\right\}.
\]
It is well known that since $(M,g)$ is complete, $W^{1,2}([0,1],M)$ is a smooth,
infinite dimensional, complete Riemannian manifold and
$\Omega_{p,q}^{1,2}$ is a smooth closed (hence complete) submanifold
(see, e.g., \cite[Lemma 6.2]{Ekelan74}).
For every $z \in \Omega_{p,q}^{1,2}$,
the tangent space $T_z\Omega^{1,2}_{p,q}$ is equal to 
\[
	T_z\Omega_{p,q}^{1,2} = 
	\left\{
		\xi \in W^{1,2}_0([0,1],TM):
		\xi(s) \in T_{z(s)}M,\ \forall s \in [0,1]
	\right\}.
\]
Weak solutions of 
\eqref{eq:Euler-Lagrange} connecting the points $p$, $q\in M$ are by definition 
the critical points of the action functional
$\mathcal A\colon \Omega_{p,q}^{1,2} \to \mathbb{R}$, defined as
\[
	\mathcal A(z): = \int_{0}^{1} L(z,\dot{z})\diff s.
\]
\begin{remark}
	From \eqref{L}, we have that $L=L_c+Q^2/Q(K)$ and hence from Assumption~\ref{ass:L1}
	we get that $\A$ is $C^1$ on $\Omega_{p,q}^{1,2}$
	(see, e.g., the first part of the proof of Proposition 3.1 in \cite{Abbondandolo2009}), with differential  
	$\diff\mathcal A(z)\colon T_z\Omega_{p,q}^{1,2} \to \mathbb{R}$ at a curve $z\in \Omega_{p,q}^{1,2}$ equal to
	\begin{equation}
		\label{eq:diff-f}
		\diff\mathcal A(z)[\xi]= \int_{0}^{1}
		\left(
			\partial_x L(z,\dot{z})[\xi]
			+ \partial_{v}L(z,\dot{z})[\dot{\xi}]
		\right) \diff s.
	\end{equation}
\end{remark}

Let $\xi \in T_z\Omega_{p,q}$ such that $\xi=X\circ z$,
with $X$ a smooth vector field  in $M$, then  in natural coordinates  
$(x^0,\dots,x^m,v^{0},\dots,v^m)$, of $TM$,
the integrand function in \eqref{eq:diff-f} is given by
\[
	\partial_x L(z,\dot{z})[\xi]
	+ \partial_{v}L(z,\dot{z})[\dot{\xi}] =
	\frac{\partial L}{\partial x^i}(z,\dot{z}) X^i(z)
	+ \frac{\partial L}{\partial v^i}(z,\dot{z})
	\frac{\partial X^i}{\partial x^h}(z)\dot z^h.
\]	
In the following, by an abuse of notation,
we also denote by $\dot X$ the derivative of $X(z)$,
i.e. $\dfrac{\partial X^i}{\partial x^h}(z)\dot z^h$.

From \eqref{eq:Killing-LocCoord}
we then get
\begin{equation}
	\label{eq:hp-K1}
	\partial_x L(z,\dot{z})[K]+
	\partial_{v}L(z,\dot{z})[\dot{K}] = 0,
\end{equation}
for all $z \in \Omega_{p,q}^{1,2}$.

The main goal of this section is to 
prove that the critical points of $\mathcal A$
lay on the following subset of $\Omega_{p,q}^{1,2}$:
\begin{equation}
	\label{eq:def-Npq}
	\mathcal{N}_{p,q}:=	\left\{
		z \in \Omega_{p,q}^{1,2}(M):
		N(z,\dot z)\text{ is constant a.e. on }[0,1]
	\right\}.
\end{equation}
For every $z \in \Omega_{p,q}^{1,2}$,
let us define
\bmln
\mathcal{W}_z := \left\{
	\xi \in T_z\Omega_{p,q}^{1,2}:
\exists \mu\in W^{1,2}_0([0,1],\R)\right.\\
\left.\text{ such that $\xi(s)=\mu(s)K_{z(s)}$},\text{ a.e. on } [0,1]
\right\}.
\emln
\begin{proposition}
	\label{prop:NpqCost}

	\[	\mathcal{N}_{p,q} =
		\left\{
			z \in \Omega_{p,q}^{1,2}:
			\diff\mathcal A(z)[\xi] = 0,\ \forall \xi \in \mathcal{W}_z
	\right\}.\]
\end{proposition}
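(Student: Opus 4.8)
The plan is to prove the two inclusions at once by computing $\diff\mathcal A(z)[\xi]$ for an arbitrary $\xi\in\mathcal W_z$ and showing that the vanishing of this quantity for all such $\xi$ is equivalent to the a.e.\ constancy of the Noether charge along $z$. Fix $z\in\Omega^{1,2}_{p,q}$ and $\xi\in\mathcal W_z$, so that $\xi=\mu K(z)$ with $\mu\in W^{1,2}_0([0,1],\R)$ and, in the abuse of notation introduced above, $\dot\xi=\dot\mu\,K(z)+\mu\,\dot K$. Substituting into \eqref{eq:diff-f} and using the linearity of $\partial_xL$ and $\partial_vL$ in their bracketed argument, I would split
\[
\diff\mathcal A(z)[\xi]=\int_0^1\mu\big(\partial_xL(z,\dot z)[K]+\partial_vL(z,\dot z)[\dot K]\big)\diff s+\int_0^1\dot\mu\,\partial_vL(z,\dot z)[K]\,\diff s.
\]
The key step is that the first integrand vanishes identically by the infinitesimal symmetry identity \eqref{eq:hp-K1}; since $\partial_vL(z,\dot z)[K]=N(z,\dot z)$ by \eqref{noether}, one is left with
\[
\diff\mathcal A(z)[\xi]=\int_0^1\dot\mu(s)\,N\big(z(s),\dot z(s)\big)\,\diff s.
\]

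With this reduction in hand, the inclusion $\mathcal N_{p,q}\subseteq\{z:\diff\mathcal A(z)[\xi]=0\ \forall\xi\in\mathcal W_z\}$ is immediate: if $N(z,\dot z)$ equals a constant $c$ a.e., then the last integral equals $c\big(\mu(1)-\mu(0)\big)=0$ for every $\mu\in W^{1,2}_0$. For the reverse inclusion I would first record that $N(z,\dot z)=Q(\dot z)+d(z)\in L^2([0,1])$, because the coefficients of the $C^1$ one-form $Q$ and of $d$ are bounded on the compact image $z([0,1])$ while $\dot z\in L^2$. Setting $f:=N(z,\dot z)$ and $\bar f:=\int_0^1 f\,\diff s$, I would then test with the specific function $\mu_0(s):=\int_0^s\big(f(\tau)-\bar f\big)\,\diff\tau$, which belongs to $W^{1,2}_0$ precisely because $\mu_0(1)=\int_0^1(f-\bar f)\,\diff s=0$; plugging $\mu=\mu_0$ into $\diff\mathcal A(z)[\xi]=0$ yields $\int_0^1(f-\bar f)f\,\diff s=\int_0^1(f-\bar f)^2\,\diff s=0$, whence $f=\bar f$ a.e., i.e.\ $z\in\mathcal N_{p,q}$. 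Alternatively one may quote the du Bois--Reymond lemma, since the smooth compactly supported functions form a subset of $W^{1,2}_0$.

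I do not expect a serious obstacle here: the crux is simply the cancellation produced by \eqref{eq:hp-K1}, which makes the restricted first variation of $\mathcal A$ along $\mathcal W_z$ detect only the pairing of $\dot\mu$ with the Noether charge. The only points requiring care are the $L^2$-membership of $N(z,\dot z)$, needed so that the test curve $\mu_0$ is an admissible element of $W^{1,2}_0$, and the observation that $\mu$ is merely $W^{1,2}$ rather than smooth, which is harmless since both the expression \eqref{eq:diff-f} for $\diff\mathcal A$ and the identity \eqref{eq:hp-K1} remain valid for $W^{1,2}$ variations.
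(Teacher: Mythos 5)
Your proposal is correct and follows essentially the same route as the paper's proof: decompose $\diff\mathcal A(z)[\xi]$ for $\xi=\mu K(z)$, cancel the first integral via the symmetry identity \eqref{eq:hp-K1}, and reduce the statement to the vanishing of $\int_0^1 \mu'\, N(z,\dot z)\,\diff s$ for all $\mu\in W^{1,2}_0([0,1],\R)$. The only difference is that you make explicit the final equivalence (constancy a.e.\ of the Noether charge) via the du Bois--Reymond test function $\mu_0$ and the $L^2$-membership of $N(z,\dot z)$, a step the paper simply asserts.
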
 
\begin{proof}
	For all $\xi\in \mathcal W_z$, from \eqref{eq:hp-K1} we have
	\begin{multline*}
		\diff\mathcal A(z)[\xi]= 
		\int_{0}^{1} \left(	
			\partial_x L(z,\dot{z})[\xi]
			+ \partial_{v}L(z,\dot{z})[\dot{\xi}]
		\right) \diff s \\
		= \int_{0}^{1} \mu\left(
			\partial_x L(z,\dot{z})[K]
			+ \partial_{v}L(z,\dot{z})[\dot{K}]
		\right) \diff s
		+\int_{0}^{1}\mu'\ \partial_{v} L(z,\dot{z})[K] \diff s \\
		=\int_{0}^{1}\mu'\ \partial_{v} L(z,\dot{z})[K] \diff s .
	\end{multline*}
	As a consequence, $\diff\mathcal A(z)[\xi] = 0$
	for all $\xi \in \mathcal{W}_z$ if and only if
	\[
		\int_{0}^{1}\mu'\ \partial_{v} L(z,\dot{z})[K] \diff s = 0,
		\quad \forall \mu \in W^{1,2}_0([0,1],\mathbb{R}),
	\]
	namely if and only if
	$
	\partial_v L(z,\dot{z})[K]=N(z,\dot z)
	$
	is constant a.e. on $[0,1]$. 
\end{proof}

\begin{proposition}
	\label{prop:Npqmanifold}
	The set $\mathcal{N}_{p,q}$
	is a $C^1$ closed submanifold of $\Omega_{p,q}^{1,2}$.
	Moreover, for every $z \in \mathcal{N}_{p,q}$,
	the tangent space $T_z\mathcal{N}_{p,q}$ is given by
	\begin{equation}
		\label{eq:TzNpq}
		T_z\mathcal{N}_{p,q} = 
		\left\{
			\xi \in T_z\Omega^{1,2}_{p,q}:
			\partial_{x}N(z,\dot{z})[\xi]+Q(\dot\xi)
			\text{ is constant a.e. on }[0,1]
		\right\}.
	\end{equation}
\end{proposition}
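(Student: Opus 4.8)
The plan is to realize $\mathcal{N}_{p,q}$ as the regular preimage of a point under a $C^1$ map, so that the submersion (regular value) theorem for Banach manifolds yields at once the submanifold structure and the identification of the tangent space with the kernel of the differential. Define $\mathcal{N}\colon\Omega_{p,q}^{1,2}\to L^2([0,1],\R)$ by $\mathcal{N}(z)(s):=N\big(z(s),\dot z(s)\big)=Q(\dot z)+d(z)$, and then
\[
\Psi\colon\Omega_{p,q}^{1,2}\to W^{1,2}_0([0,1],\R),\qquad
\Psi(z)(s):=\int_0^s\mathcal{N}(z)\,\de\sigma-s\int_0^1\mathcal{N}(z)\,\de\sigma .
\]
Since $\Psi(z)(0)=\Psi(z)(1)=0$, the map $\Psi$ is well defined, and $\Psi(z)=0$ exactly when the primitive of $\mathcal{N}(z)$ is linear, i.e. when $N(z,\dot z)$ is constant a.e.; hence $\mathcal{N}_{p,q}=\Psi^{-1}(0)$. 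It then suffices to prove that $\Psi$ is $C^1$ and that $0$ is a regular value, i.e. that at every $z\in\Psi^{-1}(0)$ the differential $\diff\Psi(z)$ is onto with split kernel.

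First I would check that $\mathcal{N}$ is of class $C^1$ as a map into $L^2$, with
\[
\diff\mathcal{N}(z)[\xi]=\partial_x N(z,\dot z)[\xi]+Q(\dot\xi).
\]
This is the \emph{main technical obstacle}: a differentiability statement for a substitution (Nemytskii) operator under the sole $C^1$ regularity of $Q$ and $d$. It can be handled exactly as the $C^1$ regularity of $\mathcal{A}$ discussed after the definition of the action (and as in the cited Proposition~3.1 of \cite{Abbondandolo2009}): in local coordinates $\mathcal{N}(z)=Q_i(z)\dot z^i+d(z)$, where $z\mapsto Q_i(z)$ and $z\mapsto d(z)$ are $C^1$ from $W^{1,2}$ into $C^0$ (using $W^{1,2}([0,1])\hookrightarrow C^0$), while $z\mapsto\dot z^i$ is bounded linear into $L^2$; the linearity in the velocities makes the $v$-dependence harmless. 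Since the primitive operator $L^2\to W^{1,2}_0$ together with the subtraction of the mean is bounded and linear, $\Psi$ inherits the $C^1$ regularity and $\diff\Psi(z)[\xi](s)=\int_0^s\diff\mathcal{N}(z)[\xi]\,\de\sigma-s\int_0^1\diff\mathcal{N}(z)[\xi]\,\de\sigma$.

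For the regular value property I would test the differential on the directions $\mathcal W_z$ used in Proposition~\ref{prop:NpqCost}. For $\xi=\mu K$ with $\mu\in W^{1,2}_0([0,1],\R)$ one computes $\diff\mathcal{N}(z)[\mu K]=\mu'\,Q(K)+\mu\,a_z$, where $a_z:=\partial_x N(z,\dot z)[K]+Q(\dot K)\in L^2$. Solving $\diff\Psi(z)[\mu K]=\eta$ for a prescribed $\eta\in W^{1,2}_0$ reduces, after differentiation, to the first-order linear ODE $\mu'Q(K)+\mu\,a_z=\eta'+C$ with $\mu(0)=\mu(1)=0$, where $C$ is a free real constant. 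Because $Q(K)\le -k_1<0$ by Assumption~\ref{ass:L} and \eqref{eq:boundNoether}, the coefficient $Q(K)$ is bounded away from zero; integrating with $\mu(0)=0$ through an integrating factor $e^{P}$, $P:=\int_0^\cdot a_z/Q(K)$, one fixes $C$ uniquely so that the remaining endpoint condition $\mu(1)=0$ holds, solvability being guaranteed by $\int_0^1 e^{P}/Q(K)\,\de s\neq 0$. The same computation with $\eta=0$ forces $\mu\equiv 0$, so the bounded linear map $T_z\colon\mu K\mapsto\diff\Psi(z)[\mu K]$ from $\mathcal W_z\cong W^{1,2}_0$ onto $W^{1,2}_0$ is a bijection, hence an isomorphism by the open mapping theorem.

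This isomorphism gives at once the two facts I need: $\diff\Psi(z)$ is surjective (already on $\mathcal W_z$), and $T_z\Omega^{1,2}_{p,q}=\ker\diff\Psi(z)\oplus\mathcal W_z$ is a topological direct sum, so $\ker\diff\Psi(z)$ is complemented. Thus $0$ is a regular value and $\mathcal{N}_{p,q}=\Psi^{-1}(0)$ is a $C^1$ submanifold; it is closed because it is the preimage of the closed set $\{0\}$ under the continuous map $\Psi$ and $\Omega^{1,2}_{p,q}$ is itself closed. Finally, $\diff\Psi(z)[\xi]=0$ if and only if $\diff\mathcal{N}(z)[\xi]$ is constant a.e., so $T_z\mathcal{N}_{p,q}=\ker\diff\Psi(z)=\{\xi:\partial_x N(z,\dot z)[\xi]+Q(\dot\xi)\text{ is constant a.e.}\}$, which is exactly \eqref{eq:TzNpq}.
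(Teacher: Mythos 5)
Your proof is correct, and its engine is the same as the paper's: surjectivity is tested on the subspace $\mathcal W_z$, where the equation reduces to the linear ODE $\mu'\,Q(K(z))+\mu\bigl(\partial_xN(z,\dot z)[K]+Q(\dot K)\bigr)=h+C$ with $\mu(0)=\mu(1)=0$, solvable because $Q(K(z))\neq 0$ and because the free additive constant $C$ can be adjusted, the relevant integral $\int_0^1 e^{P}/Q(K(z))\,\de s$ being nonzero since its integrand has a fixed sign. The functional-analytic packaging, however, differs. The paper keeps $F(z)=N(z,\dot z)$ with values in $L^2$ and applies the criterion of \cite[Proposition 3, p.~28]{lang1985} for preimages of submanifolds, applied to the submanifold $\mathcal C$ of a.e.\ constant functions (the required splitting of the kernel is automatic, $T_z\Omega^{1,2}_{p,q}$ being a Hilbert space); you instead compose $F$ with the explicit bounded linear map $P\colon L^2\to W^{1,2}_0$, $P(f)(s)=\int_0^s f\,\de\sigma-s\int_0^1 f\,\de\sigma$, whose kernel is exactly $\mathcal C$, so that $\mathcal N_{p,q}=(P\circ F)^{-1}(0)$ and the ordinary regular value theorem suffices. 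This repackaging buys you two things: closedness comes for free, as the preimage of a point under a continuous map, where the paper argues by sequences and pointwise a.e.\ convergence; and the splitting $T_z\Omega^{1,2}_{p,q}=\ker\diff\Psi(z)\oplus\mathcal W_z$ is exhibited explicitly through the isomorphism $\diff\Psi(z)|_{\mathcal W_z}$, so your argument already contains Lemma~\ref{lem:directsum}.

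One blemish should be fixed: you justify that $Q(K)$ is bounded away from zero by invoking \eqref{eq:boundNoether}, but that inequality belongs to Assumption~\ref{ass:bounds}, which is not in force here — Section~\ref{sec:problem-reduction} assumes only Assumptions~\ref{ass:L} and \ref{ass:L1}, and the paper's proof correspondingly uses only $Q_x(K)\neq 0$ from Assumption~\ref{ass:L}-\eqref{negativeQKhyp}. The repair is immediate and costs nothing: along the fixed curve $z$, the function $s\mapsto Q_{z(s)}(K_{z(s)})$ is continuous and everywhere negative on the compact interval $[0,1]$, hence bounded away from zero there, which is all that your integrating-factor construction and the open mapping argument require.
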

\begin{proof}
	Let 
	$F:\Omega_{p,q}^{1,2}\to L^{2}([0,1],\mathbb{R})$
	be defined as
	\[
		F(z):=N(z,\dot{z})
	\]
	and $\mathcal{C}\subset L^2([0,1],\mathbb{R})$
	be defined as
	\[
		\mathcal{C}:=		\left\{
			f \in L^{2}([0,1],\mathbb{R}):\ f(s)=\text{const. a.e.}
		\right\}.
	\]
	By the definition of $\mathcal{N}_{p,q}$ given in \eqref{eq:def-Npq},
	we have
	\[
		\mathcal{N}_{p,q} = F^{-1}(\mathcal{C}).
	\]
	The map $F$ is $C^1$ 
	and its differential is
	\begin{equation}
		\label{eq:Npqmanifold-diffF}
		\diff F(z)[\xi]=
		\partial_{x}N(z,\dot{z})[\xi]+Q(\dot\xi).
	\end{equation}
	By \cite[Proposition 3, p. 28]{lang1985},
	it is enough to show that for all $z \in \mathcal{N}_{p,q}$
	and $h \in L^2([0,1],\mathbb{R})$
	there exist $\xi \in T_z\Omega_{p,q}^{1,2}$ 
	and $c \in \mathbb{R}$ such that
	\begin{equation}
		\label{eq:Npqmanifold-proof1}
		\diff F(z)[\xi] = h + c.
	\end{equation}
	Therefore,
	let us fix $z \in \mathcal{N}_{p,q}$
	and $h \in L^2([0,1],\mathbb{R})$.
	Let us consider $\xi \in \mathcal{W}_z \subset T_z\Omega_{p,q}^{1,2}$,
	so there exists 
	$\mu \in W^{1,2}_0([0,1],\mathbb{R})$
	such that $\xi(s) = \mu(s)K(z(s))$.
	By \eqref{eq:Npqmanifold-diffF},
	recalling that $d$ is invariant by the flow of $K$ and then $\de d(K)=0$,
	we obtain
	\begin{equation}
		\label{eq:Npqmanifold-diffFxi}
		\diff F(z)[\xi]=
		\mu\big(\partial_{x}Q(\dot z, K)+Q(\dot K)\big)+\mu'Q(K(z)).
	\end{equation}
	Using \eqref{eq:Npqmanifold-diffFxi} and
	recalling that by Assumption \ref{ass:L}-\eqref{negativeQKhyp},
	$Q_x(K) \neq  0$ for all $x\in M$,
	\eqref{eq:Npqmanifold-proof1}
	becomes an ODE in normal form with respect to $\mu$,
	namely
	\begin{equation}
		\label{eq:Npqmanifold-ODE}
		\mu'(s) + a(s)\mu(s) = b_c(s),
	\end{equation}
	where
	\[
		a(s) = \frac{\partial_{x}Q(\dot z, K)+Q(\dot K)}{Q(K(z))}
		\quad \text{and}\quad
		b_c(s) = \frac{h(s) + c}{Q(K(z))}.	
	\]
	Setting
	$
	A(s)=\int_{0}^{s}a(\tau)\diff\tau,
	$
	and
	\[
		c = - \left(\int_{0}^{1}
			\frac{e^{A(s)}}{Q(K(z))}\diff s
		\right)^{-1}
		\left(\int_{0}^{1}
			\frac{e^{A(s)}h(s)}{Q(K(z))}\diff s
		\right),
	\]
	a solution of \eqref{eq:Npqmanifold-ODE}
	which satisfies the boundary conditions 
	$\mu(0) = \mu(1) = 0$ is given by
	\begin{equation*}
		\mu(s)=e^{-A(s)}\int_{0}^{s}b_c(s)e^{A(\tau)}\diff\tau.
	\end{equation*}
	Thus, for every $z \in \mathcal{N}_{p,q}$
	and $h\in L^{2}([0,1],\mathbb{R})$,
	there exist $\xi \in T_z\Omega_{p,q}^{1,2}$ and $c \in \mathbb{R}$
	such that \eqref{eq:Npqmanifold-proof1} holds,
	hence $\mathcal{N}_{p,q}$ is a $C^1$ submanifold of $\Omega^{1,2}_{p,q}$.

	By the previous part of the proof,
	for all $z\in \mathcal N_{p,q}$,
	$T_z\mathcal{N}_{p,q}$ is identified with the set 
	of all $\zeta$ such that $\de F(z)[\zeta] \in T_{F(z)}\mathcal{C}$.
	Then, \eqref{eq:TzNpq} follows from \eqref{eq:Npqmanifold-diffF}
	and the fact that $T_{F(z)}\mathcal{C}$
	is identified with the set of constant functions on $[0,1]$.

	It remains to show that $\mathcal{N}_{p,q}$ is closed.
	Let $(z_n)_n \subset \mathcal{N}_{p,q} \subset \Omega^{1,2}_{p,q}$ be a sequence converging to 
	$z\in \Omega^{1,2}_{p,q}$.
	Up to considering a subsequence, we have that
	$N(z_n,\dot{z}_n)$ converges pointwise to $N(z,\dot{z})$,
	so $N(z,\dot{z})$ is constant a.e. on $[0,1]$, i.e.  $z \in \mathcal{N}_{p,q}$.

\end{proof}

\begin{lemma}\label{lem:directsum}
	For each $z\in \mathcal N_{p,q}$,
	$T_z\Omega_{p,q}^{1,2} = \mathcal{W}_z \oplus T_z\mathcal{N}_{p,q}$.
\end{lemma}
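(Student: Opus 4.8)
The goal is to show that for each $z\in\mathcal N_{p,q}$, the tangent space $T_z\Omega_{p,q}^{1,2}$ decomposes as the direct sum $\mathcal W_z\oplus T_z\mathcal N_{p,q}$. We have explicit descriptions of all three spaces: $\mathcal W_z$ consists of $\xi(s)=\mu(s)K_{z(s)}$ with $\mu\in W^{1,2}_0([0,1],\mathbb R)$; $T_z\mathcal N_{p,q}$ is characterized by equation (4.?) as those $\xi$ for which $\partial_x N(z,\dot z)[\xi]+Q(\dot\xi)$ is constant a.e.; and $T_z\Omega_{p,q}^{1,2}$ is the full space of $W^{1,2}_0$ sections along $z$.

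**The plan.**

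A direct-sum decomposition $V=A\oplus B$ requires two things: $A\cap B=\{0\}$ and $A+B=V$. The strategy is to establish both using the explicit formula (4.?) — which I'll refer to as the expression $\diff F(z)[\xi]=\partial_x N(z,\dot z)[\xi]+Q(\dot\xi)$ computed in the previous proof — and the ODE machinery already developed in the proof of Proposition~\ref{prop:Npqmanifold}.

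Let me sketch each piece.

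First I would prove the intersection is trivial. Suppose $\xi\in\mathcal W_z\cap T_z\mathcal N_{p,q}$, so $\xi=\mu K(z)$ with $\mu\in W^{1,2}_0$, and simultaneously $\diff F(z)[\xi]$ is constant a.e. Substituting $\xi=\mu K(z)$ into the differential formula, I can reuse the computation \eqref{eq:Npqmanifold-diffFxi}, which gives $\diff F(z)[\xi]=\mu\big(\partial_x Q(\dot z,K)+Q(\dot K)\big)+\mu' Q(K(z))$. The condition that this equals a constant $c$ is precisely the linear first-order ODE \eqref{eq:Npqmanifold-ODE} in $\mu$ with homogeneous right-hand side replaced by $c/Q(K(z))$. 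Since $\mu(0)=\mu(1)=0$ are imposed, I need to show this forces $\mu\equiv 0$. The key point is that the constant $c$ is itself determined: integrating the ODE against the integrating factor $e^{A(s)}$ and using the boundary conditions $\mu(0)=\mu(1)=0$ forces exactly the value of $c$ written in the previous proof (with $h=0$, giving $c=0$), and then $\mu\equiv 0$ follows by uniqueness for the resulting homogeneous ODE with zero boundary data. So $\xi=0$, establishing $\mathcal W_z\cap T_z\mathcal N_{p,q}=\{0\}$.

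Next I would prove the sum is everything. Given an arbitrary $\xi\in T_z\Omega_{p,q}^{1,2}$, set $h:=\diff F(z)[\xi]=\partial_x N(z,\dot z)[\xi]+Q(\dot\xi)\in L^2([0,1],\mathbb R)$. By exactly the construction in the proof of Proposition~\ref{prop:Npqmanifold} (with this $h$), there exist $c\in\mathbb R$ and $\eta=\mu K(z)\in\mathcal W_z$ with $\diff F(z)[\eta]=h+c$. Wait — I want to split $\xi$, so I should instead solve for $\eta\in\mathcal W_z$ with $\diff F(z)[\eta]=h-c'$ for the appropriate constant, making $\xi-\eta\in T_z\mathcal N_{p,q}$. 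Concretely, choose $\eta=\mu K(z)$ so that $\diff F(z)[\eta]$ differs from $h$ by a constant; then $\diff F(z)[\xi-\eta]=h-\diff F(z)[\eta]$ is constant a.e., which by \eqref{eq:TzNpq} means $\xi-\eta\in T_z\mathcal N_{p,q}$. Thus $\xi=\eta+(\xi-\eta)$ with $\eta\in\mathcal W_z$ and $\xi-\eta\in T_z\mathcal N_{p,q}$, giving $T_z\Omega_{p,q}^{1,2}=\mathcal W_z+T_z\mathcal N_{p,q}$.

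**The main obstacle.**

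The delicate point is the surjectivity step: I must produce $\eta\in\mathcal W_z$ whose image under $\diff F(z)$ matches $h$ up to an additive constant, and this is exactly the solvability of the two-point boundary value problem \eqref{eq:Npqmanifold-ODE} with $\mu(0)=\mu(1)=0$. The whole point of introducing the free constant $c$ in Proposition~\ref{prop:Npqmanifold} was to guarantee this solvability — the coefficient $Q(K(z))$ is nowhere zero by Assumption~\ref{ass:L}-\eqref{negativeQKhyp}, so the integrating factor is well defined, and the explicit choice of $c$ makes $\mu(1)=0$ automatic. I would therefore simply invoke that construction rather than redo it. The triviality of the intersection is the easier half, being just the uniqueness for the same ODE with zero data. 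Once both halves are in place, the direct-sum conclusion is immediate.
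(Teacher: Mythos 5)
Your proposal is correct and, for the decomposition half, follows exactly the paper's route: given $\zeta\in T_z\Omega^{1,2}_{p,q}$, the paper also reduces to finding $\mu\in W^{1,2}_0([0,1],\R)$ and a constant $c$ solving the linear ODE \eqref{eq:directsum-proof}, and then simply invokes the construction from the proof of Proposition~\ref{prop:Npqmanifold}, as you do. The one genuine difference is that you also verify $\mathcal W_z\cap T_z\mathcal N_{p,q}=\{0\}$, whereas the paper's proof states ``it is enough to show'' the splitting $\zeta=\mu K(z)+\xi$ and never addresses the intersection explicitly; your argument for this half is sound (for $\xi=\mu K(z)\in T_z\mathcal N_{p,q}$, equation \eqref{eq:Npqmanifold-diffFxi} turns the constancy condition into the ODE $\mu'+a\mu=c/Q(K(z))$ with $\mu(0)=\mu(1)=0$; since $e^{A}/Q(K(z))$ has constant sign by \eqref{negativeQK}, integrating forces $c=0$ and then $\mu\equiv 0$ by uniqueness), and it is precisely the uniqueness statement that makes the sum direct, so your treatment is in fact the more complete of the two.
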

\begin{proof}
	It is enough to show that
	for each 
	$\zeta \in T_z\Omega^{1,2}_{p,q}$
	there exists
	$\mu \in W^{1,2}_0([0,1],\mathbb{R})$ such that
	\[
		\xi:=\zeta - \mu K(z)\in T_z\mathcal{N}_{p,q}.
	\]
	By \eqref{eq:TzNpq}, this amounts to prove that there exist
	$\mu \in W^{1,2}_0([0,1],\mathbb{R})$ and a constant $c\in\R$ such that 
	\[
		\partial_{x}N(z,\dot{z})[\xi]+Q(\dot\xi) = c,
		\quad \text{a.e. on }[0,1],
	\] 
	which is equivalent to 
	\begin{equation}
		\label{eq:directsum-proof}
		\partial_{x}N(z,\dot{z})[\zeta]+Q(\dot\zeta) 
		-
		\mu\big(\partial_{x}Q(\dot{z}, K)+Q(\dot K)\big)-\mu'Q(K(z)) = c,
	\end{equation}
	a.e. on $[0,1]$. Arguing as in the proof of Proposition~\ref{prop:Npqmanifold},
	we see  that \eqref{eq:directsum-proof} admits a solution
	$\mu\in W^{1,2}_0([0,1],\mathbb{R})$ for a certain constant $c$,
	and we are done.
\end{proof}
\begin{definition}
	The {\em reduced action functional} $\mathcal J$ is the restriction of the functional $\mathcal A$ to the manifold $\mathcal{N}_{p,q}$, i.e. $\mathcal J\colon \mathcal{N}_{p,q} \to \mathbb{R}$, $\mathcal J = \mathcal A \big|_{\mathcal{N}_{p,q}}$.
\end{definition}
\begin{remark}
	Being $\A\in C^1(\Omega^{1,2}_{p,q})$, we get that $\J$ is $C^1$ on $\mathcal N_{p,q}$ as well.
\end{remark}

\begin{teo}
	\label{teo:punticriticiLiberi}
	A curve $z \in \Omega_{p,q}^{1,2}$ is a critical point for $\A$
	if and only if $z \in \mathcal{N}_{p,q}$ and 
	$z$ is a critical point for $\J$.
\end{teo}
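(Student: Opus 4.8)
The plan is to deduce the statement purely from the two structural results already established, namely the characterization of $\mathcal N_{p,q}$ in Proposition~\ref{prop:NpqCost} and the splitting $T_z\Omega_{p,q}^{1,2} = \mathcal W_z \oplus T_z\mathcal N_{p,q}$ of Lemma~\ref{lem:directsum}. No new analytic estimate is needed: the whole argument is linear-algebraic once one knows how $\diff\mathcal A(z)$ behaves on the two complementary pieces $\mathcal W_z$ and $T_z\mathcal N_{p,q}$.

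For the forward implication, I would start from a critical point $z$ of $\mathcal A$, so that $\diff\mathcal A(z)[\xi]=0$ for every $\xi \in T_z\Omega_{p,q}^{1,2}$. Restricting this vanishing to the subset $\mathcal W_z$ and invoking Proposition~\ref{prop:NpqCost} immediately yields $z \in \mathcal N_{p,q}$; note that $\mathcal W_z$ is defined for all curves in $\Omega_{p,q}^{1,2}$, so this step is legitimate before membership in $\mathcal N_{p,q}$ is known. Once $z \in \mathcal N_{p,q}$, the tangent space $T_z\mathcal N_{p,q}$ is a linear subspace of $T_z\Omega_{p,q}^{1,2}$, and since $\mathcal J = \mathcal A|_{\mathcal N_{p,q}}$ one has $\diff\mathcal J(z)=\diff\mathcal A(z)|_{T_z\mathcal N_{p,q}}$, which vanishes as well; hence $z$ is a critical point of $\mathcal J$.

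For the converse, I would take $z \in \mathcal N_{p,q}$ that is critical for $\mathcal J$ and show that $\diff\mathcal A(z)$ annihilates every tangent vector. On the one hand, criticality of $\mathcal J$ gives $\diff\mathcal A(z)[\xi]=0$ for all $\xi \in T_z\mathcal N_{p,q}$. On the other hand, since $z \in \mathcal N_{p,q}$, Proposition~\ref{prop:NpqCost} gives $\diff\mathcal A(z)[\xi]=0$ for all $\xi \in \mathcal W_z$. Writing an arbitrary $\zeta \in T_z\Omega_{p,q}^{1,2}$ as $\zeta = \xi_1 + \xi_2$ with $\xi_1 \in \mathcal W_z$ and $\xi_2 \in T_z\mathcal N_{p,q}$, as permitted by Lemma~\ref{lem:directsum}, and using linearity of $\diff\mathcal A(z)$, I obtain $\diff\mathcal A(z)[\zeta]=0$, so $z$ is critical for $\mathcal A$.

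I do not expect a genuine obstacle here, since all the work has been front-loaded into Proposition~\ref{prop:NpqCost} and Lemma~\ref{lem:directsum}. The only point requiring a little care is the logical ordering in the forward direction: one must first establish $z \in \mathcal N_{p,q}$, so that $T_z\mathcal N_{p,q}$ and the restriction $\mathcal J$ make sense at $z$, before speaking of criticality of $\mathcal J$. This is precisely why the characterization via $\mathcal W_z$, which is available for every curve, is the right tool to apply first.
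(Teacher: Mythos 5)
Your proposal is correct and follows essentially the same route as the paper's own proof: both directions rest on Proposition~\ref{prop:NpqCost} (vanishing of $\diff\mathcal A(z)$ on $\mathcal W_z$ characterizes membership in $\mathcal N_{p,q}$) and on the splitting $T_z\Omega_{p,q}^{1,2}=\mathcal W_z\oplus T_z\mathcal N_{p,q}$ of Lemma~\ref{lem:directsum}, used exactly as you use them. Your remark on the logical ordering in the forward direction (establishing $z\in\mathcal N_{p,q}$ via $\mathcal W_z$, which is defined for every curve, before speaking of $\diff\mathcal J(z)$) is a point the paper leaves implicit, but the argument is the same.
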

\begin{proof}
	Let us assume that $z$ is a critical point for $\A$.
	Then 
	$\diff\A(z)[\xi] = 0$ for all $\xi \in \mathcal{W}_z \subset T_z\Omega^{1,2}_{p,q}$
	and by Proposition \ref{prop:NpqCost}
	we have $z \in \mathcal{N}_{p,q}$.
	Since $T_z\mathcal{N}_{p,q}\subset T_z\Omega^{1,2}_{p,q}$,
	\[
		\diff\J(z)[\xi] = \diff\A(z)[\xi] = 0,
		\quad\forall \xi \in T_z\mathcal{N}_{p,q},
	\]
	so $z$ is a critical point for $\J$.

	Now, let us assume that $z\in \mathcal{N}_{p,q}$ and
	$z$ is a critical point for $\J$.
	By Lemma~\ref{lem:directsum},
	for every $\zeta \in T_z\Omega_{p,q}^{1,2}$ there
	exist $\xi \in T_z\mathcal{N}_{p,q}$
	and $\psi \in \mathcal{W}_z$ such that
	$\zeta = \psi + \xi$.
	By Proposition \ref{prop:NpqCost}, we have
	$\diff\A(z)[\psi] = 0$, 
	while
	$\diff\A(z)[\xi] = \diff\J(z)[\xi] = 0$
	because $z$ is a critical point for $\J$.
	Therefore, $\diff\A(z)[\zeta] = \diff\A(z)[\psi] + \diff\A(z)[\xi] = 0$,
	namely $z$ is a critical point for $\A$.
\end{proof}

\section{Lower boundedness and Palais-Smale condition for the reduced action}
Let us give a  condition on the manifold $\mathcal N_{p,q}$
implying  that $\J$ is bounded from below and satisfies the Palais-Smale condition.
For every $c\in \mathbb{R}$, we denote by $\J^c$ the sublevel of $\J$, namely
\[
	\J^c := \left\{z \in \mathcal{N}_{p,q}: \J(z)\le c\right\}.
\]
\begin{definition}\label{c-boundedness}
	We say that $\mathcal N_{p,q}$ is {\em $c$-bounded} if $\J^c\neq\emptyset$ and 
	\[
		N_c := 	\sup_{z\in\J^c}|N(z,\dot z)|<+\infty.
	\]
\end{definition}

\begin{proposition}
	\label{prop:Jbounded}
	Under Assumptions~\ref{ass:L}---\ref{ass:bounds}, let $c\in\R$ such that $\mathcal N_{p,q}$ is $c$-bounded.
	Then, $\J$ is bounded from below.
\end{proposition}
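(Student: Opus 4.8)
The plan is to exploit the splitting $L = L_c + Q^2/Q(K)$ coming from \eqref{L}, together with the defining property of $\mathcal{N}_{p,q}$ and the $c$-boundedness hypothesis. For any $z \in \mathcal{N}_{p,q}$ I would write
\[
	\J(z) = \int_0^1 L_c(z,\dot z)\,\de s + \int_0^1 \frac{Q^2(\dot z)}{Q(K(z))}\,\de s,
\]
and estimate the two integrals separately.

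First I would bound the $L_c$-term from below by a universal constant, independent of $z$. This is immediate from the quadratic growth estimate \eqref{quadgrowth}: discarding the nonnegative term $\tfrac{\lambda(x)}{4}\norm{\dot z}^2$ and using \eqref{eq:boundC1} and \eqref{eq:boundC2} of Assumption~\ref{ass:bounds}, one gets the pointwise bound $L_c(z,\dot z) \geq c_2 - c_3^2/c_1$, whence $\int_0^1 L_c(z,\dot z)\,\de s \geq c_2 - c_3^2/c_1$.

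Next I would control the indefinite second integral, and here the structure of $\mathcal{N}_{p,q}$ enters. Since $N(z,\dot z) = Q(\dot z) + d(z)$ is constant a.e. along $z \in \mathcal{N}_{p,q}$ by \eqref{noether}, we have $Q(\dot z) = N(z,\dot z) - d(z)$ a.e.; because $Q(K) < 0$ the integrand is nonpositive, and using $|Q(K)| \geq k_1$ from \eqref{eq:boundNoether} and $|d| \leq k_2$ from \eqref{dbounded} I obtain
\[
	\frac{Q^2(\dot z)}{Q(K(z))} \geq -\frac{\big(|N(z,\dot z)| + k_2\big)^2}{k_1}
	\quad\text{a.e. on } [0,1].
\]

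The last step is where the $c$-boundedness hypothesis is used, and it is the only genuinely non-routine point. The estimate above controls $\J$ only through the a.e. constant value $|N(z,\dot z)|$, which is not a priori bounded over all of $\mathcal{N}_{p,q}$. However, on the sublevel $\J^c$ the hypothesis gives $|N(z,\dot z)| \leq N_c$, so combining the two estimates yields
\[
	\J(z) \geq c_2 - \frac{c_3^2}{c_1} - \frac{(N_c + k_2)^2}{k_1} =: m
	\quad\text{for all } z \in \J^c.
\]
Finally, for $z \in \mathcal{N}_{p,q} \setminus \J^c$ one has $\J(z) > c$ by the very definition of the sublevel, so $\J(z) \geq \min\{m, c\}$ for every $z \in \mathcal{N}_{p,q}$, which proves the claim. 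The main subtlety is thus recognizing that $c$-boundedness constrains the Noether charge only on $\J^c$, and that this, combined with the trivial lower bound $c$ on the complement, already suffices.
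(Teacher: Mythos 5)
Your proof is correct and follows essentially the same route as the paper: the splitting $L = L_c + Q^2/Q(K)$ from \eqref{L}, the quadratic growth bound \eqref{quadgrowth} combined with the constants of Assumption~\ref{ass:bounds}, and the $c$-boundedness hypothesis to control $Q^2(\dot z)$ on the sublevel $\J^c$ (your $(N_c+k_2)^2$ versus the paper's $2(N_c^2+k_2^2)$ is an immaterial difference). Your explicit final step, noting that $\J > c$ on $\mathcal N_{p,q}\setminus \J^c$ so that $\min\{m,c\}$ bounds $\J$ everywhere, is exactly what the paper leaves implicit in ``and the thesis follows,'' and spelling it out is a welcome clarification rather than a deviation.
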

\begin{proof}
	By \eqref{L} and \eqref{quadgrowth}, we obtain
	\begin{multline}
		\label{Jbelow}
		\J(z) = \int_{0}^{1} L(z,\dot{z})\de s\\
		\geq \inte
		\bigg(\frac{\lambda(z)}{4}\norm{\dot{z}}^2
			+L(z,0)-\frac{1}{\lambda(z)}\|\partial_vL(x,0)\|^2
		\bigg)\de s
		+ \inte\frac{Q^2(\dot{z})}{Q(K(z))}\de s
	\end{multline}
	
		Since $\mathcal{N}_{p,q}$ is $c$-bounded
		and using \eqref{dbounded},	
	for every $z \in \J^c$ we have
	\begin{equation}
		\label{Qbounded}
		Q^2(\dot z)=\big(N(z,\dot z)-d(x)\big)^2\leq 2(N_c^2+k_2^2)
	\end{equation}
	thus, using \eqref{eq:boundC2} and  \eqref{eq:boundNoether}
	we have
	\[
		\J(z) \ge c_2-\frac{c_3^2}{c_1}- \frac{2(N_c^2+k_2^2)}{k_1,}
	\]
	and the thesis follows.
\end{proof}

We show now that $c$-boundedness and Assumptions~\ref{ass:L}---\ref{ass:bounds}
imply a compactness condition for the sublevels of $\J$.  
\begin{lemma}
	\label{lem:zH1bounded-Jc}
	Let $c \in \mathbb{R}$ be such that
	$\mathcal{N}_{p,q}$ is $c$-bounded.
	If Assumptions~\ref{ass:L}---\ref{ass:bounds} hold,
	then every sequence $(z_n)_n \subset \J^c$ admits a 
	uniformly convergent subsequence.
\end{lemma}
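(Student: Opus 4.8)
The plan is to reuse the lower-bound computation of Proposition~\ref{prop:Jbounded} but this time retaining the kinetic term, so as to bound the $L^2$-norms of the derivatives $\dot z_n$ uniformly in $n$, and then to invoke a standard Arzel\`a--Ascoli argument on a fixed compact subset of $M$. Since the statement asks only for uniform ($C^0$) convergence, no weak-$W^{1,2}$ compactness is needed here.

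First I would observe that, as each $z_n\in\J^c\subset\mathcal N_{p,q}$, estimate \eqref{Qbounded} gives $Q^2(\dot z_n)\le 2(N_c^2+k_2^2)$ a.e.\ on $[0,1]$, and since $Q(K(z_n))\le -k_1<0$ by \eqref{eq:boundNoether}, the last integrand in \eqref{Jbelow} satisfies $Q^2(\dot z_n)/Q(K(z_n))\ge -2(N_c^2+k_2^2)/k_1$. Keeping the kinetic term and using \eqref{eq:boundC1} and \eqref{eq:boundC2}, inequality \eqref{Jbelow} then yields
\[
c\ge \J(z_n)\ge \frac{c_1}{4}\inte \norm{\dot z_n}^2\,\de s + c_2-\frac{c_3^2}{c_1}-\frac{2(N_c^2+k_2^2)}{k_1},
\]
whence
\[
\inte \norm{\dot z_n}^2\,\de s\le \frac{4}{c_1}\left(c-c_2+\frac{c_3^2}{c_1}+\frac{2(N_c^2+k_2^2)}{k_1}\right)=:R,
\]
a bound independent of $n$.

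Next, from this uniform $L^2$-bound I would derive equicontinuity and equiboundedness of the family. For $0\le s_1\le s_2\le 1$ the Cauchy--Schwarz inequality yields $\mathrm{dist}_g\big(z_n(s_1),z_n(s_2)\big)\le \int_{s_1}^{s_2}\norm{\dot z_n}\,\de s\le R^{1/2}\abs{s_2-s_1}^{1/2}$, so $\{z_n\}$ is uniformly H\"older continuous of exponent $1/2$, hence equicontinuous. Moreover $z_n(0)=p$ forces $\mathrm{dist}_g\big(p,z_n(s)\big)\le R^{1/2}$ for all $s\in[0,1]$ and all $n$, so every $z_n$ takes values in the closed ball $\overline B_g(p,R^{1/2})$; since $(M,g)$ is complete, this ball is compact by the Hopf--Rinow theorem.

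Finally I would apply the Arzel\`a--Ascoli theorem to the maps $z_n\colon[0,1]\to\overline B_g(p,R^{1/2})$, viewed as maps into a compact metric space: equicontinuity together with relative compactness of the image provides a subsequence converging uniformly to some continuous $z\colon[0,1]\to M$. I expect the only delicate point to be the passage to the uniform $L^2$-bound on the derivatives, which is exactly where $c$-boundedness (Definition~\ref{c-boundedness}) enters, through \eqref{Qbounded}; once that bound is secured the extraction of a uniformly convergent subsequence is routine.
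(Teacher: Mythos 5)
Your proposal is correct and follows essentially the same route as the paper's own proof: both extract the uniform $L^2$ bound on $\norm{\dot z_n}$ from \eqref{Jbelow} together with \eqref{Qbounded} and Assumption~\ref{ass:bounds}, then use Cauchy--Schwarz for H\"older-$1/2$ equicontinuity and uniform boundedness, and conclude by Ascoli--Arzel\`a using completeness of $(M,g)$. The only difference is cosmetic: you make explicit the Hopf--Rinow compactness of the closed ball $\overline B_g(p,R^{1/2})$, which the paper leaves implicit in its appeal to completeness.
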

\begin{proof}
	From \eqref{Jbelow} and  Assumption \ref{ass:bounds},
	if 
	$\mathcal{N}_{p,q}$ is $c$-bounded
	we have
	\[
		c \ge \J(z_n)\ge
		\frac{c_1}{4} \int_{0}^{1}\norm{\dot{z}_n}^2 \de s
		+ c_2  -\frac{c_3^2}{c_1}-\frac{2(N_c^2+k_2^2)}{k_1},
	\]
	hence the sequence $\norm{\dot{z}_n}$ is bounded in $L^2([0,1])$.
	Then,
	denoting by $d_g$ the distance induced by the metric $g$,
	by the Cauchy-Schwarz inequality
	we have
	\[
		d_g (z_n(s_2), z_n(s_1)) \leq
		\int_{s_1}^{s_2} \|\dot z_n\| \de s\leq
		|s_2-s_1|^{1/2} \left( 	\int_0^1 \|\dot z_n\|^2\de s\right)^{1/2},
	\]
	for all $0\leq s_1 \leq s_2\leq 1$.
	Thus, $(z_n)$ is uniformly bounded and uniformly equicontinuous and,
	being $(M,g)$ complete,
	by the Ascoli-Arzel\`a theorem there exists a uniformly convergent subsequence.
\end{proof}

\begin{definition}
	A sequence $(z_n)_n\subset \J^c$ is said
	a Palais-Smale sequence for $\J$ 
	if $\diff\J(z_n)\to 0$ strongly.
	We say that $\J$ satisfies the Palais-Smale condition on $\J^c$
	if every Palais-Smale sequence $(z_n)_n\subset \J^c$ 
	admits a strongly converging subsequence. 	
\end{definition}

\begin{remark}\label{psbounded}
	We point out that
	if Assumptions~\ref{ass:L}---\ref{ass:bounds} hold and
	$\mathcal{N}_{p,q}$ is $c$-bounded,
	then $\J$ is bounded on any sequence
	$(z_n)\subset \J^c$ by Proposition~\ref{prop:Jbounded},
	as it is required in the usual definition
	of the Palais-Smale condition. 
\end{remark}

\begin{teo}
	\label{PS}
	Under Assumptions~\ref{ass:L}--\ref{ass:bounds},
	assume also that $\mathcal{N}_{p,q}$ is $c$-bounded.
	Then $\J$ satisfies the Palais-Smale condition on $\J^c$.	
\end{teo}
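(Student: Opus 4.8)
The plan is to prove the Palais--Smale condition by combining the uniform precompactness already established in Lemma~\ref{lem:zH1bounded-Jc} with a strong $W^{1,2}$-convergence argument driven by the pointwise strong convexity \eqref{monotone} of $L_c$. Let $(z_n)_n\subset\J^c$ be a Palais--Smale sequence, so that $\diff\J(z_n)\to 0$ strongly in $(T_{z_n}\mathcal N_{p,q})^*$. By Remark~\ref{psbounded} and Lemma~\ref{lem:zH1bounded-Jc}, the sequence is bounded in $W^{1,2}$ and, up to a subsequence, converges uniformly to some curve $z_\infty$; moreover $z_\infty\in\mathcal N_{p,q}$ because $\mathcal N_{p,q}$ is closed (Proposition~\ref{prop:Npqmanifold}). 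The target is to upgrade uniform convergence to strong $W^{1,2}$-convergence, which on the complete manifold $\mathcal N_{p,q}$ is exactly what the Palais--Smale condition requires.

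First I would pass from the constrained differential $\diff\J(z_n)$ on $T_{z_n}\mathcal N_{p,q}$ to the ambient differential $\diff\A(z_n)$ on $T_{z_n}\Omega^{1,2}_{p,q}$. Using the splitting $T_{z_n}\Omega^{1,2}_{p,q}=\W_{z_n}\oplus T_{z_n}\mathcal N_{p,q}$ from Lemma~\ref{lem:directsum} together with Proposition~\ref{prop:NpqCost} (which gives $\diff\A(z_n)[\xi]=0$ for all $\xi\in\W_{z_n}$, since $z_n\in\mathcal N_{p,q}$), the smallness of $\diff\J(z_n)$ on the $\mathcal N_{p,q}$-factor yields smallness of $\diff\A(z_n)$ tested against a controlled class of variations. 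The natural test field is $\xi_n:=z_n-z_\infty$ read off in a fixed coordinate-type trivialization (or, more invariantly, $\xi_n$ obtained via the exponential map of $g$), decomposed through Lemma~\ref{lem:directsum}. Because the uniform convergence controls the $\W_{z_n}$-component and because $\diff\A$ annihilates $\W_{z_n}$, the dominant contribution reduces to the $L_c$-part of the action evaluated on $\dot z_n-\dot z_\infty$.

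The heart of the argument is then the convexity estimate. Writing $L=L_c+Q^2/Q(K)$ and using \eqref{eq:diff-f}, the difference $\diff\A(z_n)[\xi_n]-\diff\A(z_\infty)[\xi_n]$ contains the term
\[
\int_0^1\big(\partial_vL_c(z_n,\dot z_n)-\partial_vL_c(z_n,\dot z_\infty)\big)[\dot z_n-\dot z_\infty]\,\de s,
\]
which by \eqref{monotone} and \eqref{eq:boundC1} is bounded below by $c_1\int_0^1\|\dot z_n-\dot z_\infty\|^2\,\de s$. The remaining terms---the horizontal derivatives $\partial_xL_c$, the mismatch between evaluating $\partial_vL_c$ at $z_n$ versus $z_\infty$, and the full contribution of the Noether-charge term $Q^2/Q(K)$---must be shown to vanish as $n\to\infty$. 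These estimates rest on uniform convergence $z_n\to z_\infty$, the $W^{1,2}$-boundedness of $\dot z_n$, and the growth bounds \eqref{partialxpinched}--\eqref{partialvpinched} (which make the integrands dominated), so that the continuity of $C$, $\lambda$ along the compact image of $z_\infty$ lets one extract uniform constants. The Noether term is handled using the $c$-boundedness bound \eqref{Qbounded} on $Q(\dot z_n)$ together with the sublinearity of $Q$ and $\eqref{eq:boundNoether}$.

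The main obstacle I expect is the careful bookkeeping of the non-$L_c$ part: the term $Q^2(\dot z)/Q(K(z))$ is only $C^1$ and not convex, so it cannot be absorbed by \eqref{monotone} and must instead be controlled directly. Here $c$-boundedness is essential, since it bounds $Q(\dot z_n)$ in $L^\infty$ via \eqref{Qbounded}, and the uniform convergence of $z_n$ controls the coefficient $1/Q(K(z_n))$ through \eqref{eq:boundNoether}; combined with weak $W^{1,2}$-convergence of $\dot z_n$ to $\dot z_\infty$ this forces the cross-terms to vanish. A secondary technical point is that the test vectors $\xi_n$ must genuinely lie in the relevant tangent spaces and that the coordinate localization be handled through a finite cover of the compact set $z_\infty([0,1])$, as anticipated in Remark~\ref{atlas}. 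Once the convexity inequality delivers $c_1\int_0^1\|\dot z_n-\dot z_\infty\|^2\,\de s\to 0$, strong $W^{1,2}$-convergence follows, completing the verification of the Palais--Smale condition on $\J^c$.
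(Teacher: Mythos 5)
Your proposal follows essentially the same route as the paper's proof: uniform convergence of a subsequence via Lemma~\ref{lem:zH1bounded-Jc}, passage from $\diff\J$ to $\diff\A$ through Lemma~\ref{lem:directsum} and Proposition~\ref{prop:NpqCost}, localization and testing with $z_n-z_\infty$, vanishing of the $\partial_x L_c$- and $Q^2/Q(K)$-terms via the growth bounds, $c$-boundedness and weak convergence, and finally the strong convexity \eqref{monotone} with \eqref{eq:boundC1} to obtain strong $W^{1,2}$ convergence. The only slip is that you invoke closedness of $\mathcal{N}_{p,q}$ (Proposition~\ref{prop:Npqmanifold}) to place the limit in $\mathcal{N}_{p,q}$ already at the uniform-convergence stage, whereas closedness holds with respect to the $W^{1,2}$ topology, so this step must be deferred until after strong convergence is established (the paper instead argues via a.e. convergence of $\dot z_n$); with that reordering your argument is complete.
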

\begin{proof}
	Let $(z_n)_{n}\subset \J^c$ be a Palais-Smale sequence for $\J$.
	By Lemma \ref{lem:zH1bounded-Jc},
	there exists a subsequence,
	still denoted by $(z_n)_n$, which uniformly converges
	to a continuous curve $z\colon[0,1]\to M$ such that $z(0)=p$ and $z(1)=q$.

	Let us now notice that by Lemma~\ref{lem:directsum},
	and taking into account that the supports of the curves $z_n$
	are in a compact subset of $M$, 
	if $\zeta_n\in T_{z_n}\Omega^{1,2}_{p,q}$ is bounded in $H^1$ norm then
	there exist two  bounded sequences $\xi_n \in T_{z_n}\mathcal{N}_{p,q}$ 
	and $\mu_n \in H^1_0([0,1],\mathbb{R})$
	such that $\zeta_n = \xi_n + \mu_n K_{z_n}$.
	By Proposition~\ref{prop:NpqCost} and since $z_n$ is a Palais-Smale sequence,
	we obtain
	\[
		\diff\A(z_n)[\zeta_n]=\diff\A(z_n)[\xi_n]+\diff\A(z_n)[\mu_nK_{z_n}]=
		\diff\J(z_n)[\xi_n]\to 0.
	\] 	
	We  now apply a localization argument as in \cite[Appendix A]{Abbondandolo2007};
	thus, we can assume that
	the Lagrangian $L$ is defined on $[0,1]\times U \times \R^{m+1}$,
	with $U$ an open neighborhood of $0$ in $\R^{m+1}$.
	Moreover, we can
	identify $(z_n)_n$ with a sequence in the Sobolev space
	$H^1([0,1],U)$.
	By Lemma \ref{lem:zH1bounded-Jc}, taking into account that the
	curves $z_n$ have fixed end-points, we get that $(z_n)_n$ is bounded in
	$H^1([0,1],U)$ and so it admits a subsequence, still denoted by
	$(z_n)$, which weakly and uniformly converges to a curve $z\in
	H^1([0,1],\R^{m+1})$ which also satisfies the same fixed end-points boundary
	conditions. 
	Thus, being $z_n-z$ bounded in $H^1$, we have $\diff\A (z_n)[z_n-z]\to 0$, i.e.
	\begin{multline*}
		\int_0^1 \partial_x L_c(z_n,\dot z_n)[z_n-z]\de s 
		+ \int_0^1 \partial_v L_c(z_n,\dot z_n)[\dot z_n-\dot z]\de s\\
		-\inte\frac{2Q(\dot z_n)\partial_{x}Q(\dot z_n, z_n-z)}{\Lambda(z_n)}\de s
		-\inte \frac{2Q(\dot z_n)Q(\dot z_n-\dot z)}{\Lambda(z_n)}\de s\\
		+\inte
		\frac{Q^2(\dot z_n)\diff\Lambda(z_n)[\dot z_n-\dot z]}{\Lambda^2(z_n)}\de s
		\longrightarrow 0,
	\end{multline*}
	where $\Lambda(x):=-Q_x(K)$.
	From \eqref{partialxpinched},
	\[
		\big|\partial_x L_c(z_n,\dot z_n)[z_n- z]\big |\leq
		C(z_n)\big(\|\dot z_n\|^2+1\big)\|z_n-z\|,
	\]
	thus, recalling that $C$ is continuous and $z_n-z$ uniformly converges to
	$0$, the first integral in the above expression converges to $0$.
	Since the sequence  $Q(\dot z_n )$ is uniformly bounded on $[0,1]$
	(recall \eqref{Qbounded}) and,
	from \eqref{eq:boundNoether},
	$0<1/\Lambda (z_n)<1/k_1$,
	the third term above converges to $0$ because
	$\dot z_n$ is bounded in $L^1$ and $z_n-z\to 0$ uniformly.
	Analogously the fourth term goes to $0$
	since $z_n$ converges uniformly to $z$ and $\dot z_n-\dot z\to 0$ weakly in $H^1$.
	For estimating the fifth term, taking into account that $Q^2(\dot z_n)$ is uniformly bounded on $[0,1]$, 
	we observe that $\text{d}\Lambda (z_n)\to \text{d}\Lambda (z)$
	in operator norm and then 
	\begin{multline*}
		\inte\frac{\diff\Lambda(z_n)[\dot z_n-\dot z]}{\Lambda^2(z_n)}\de s\\
		=\inte\frac{\big(\diff\Lambda(z_n)-\diff\Lambda(z)\big)
		[\dot z_n-\dot z]}{\Lambda^2(z_n)}\de s
		+\inte\frac{\diff\Lambda(z)[\dot z_n-\dot z]}{\Lambda^2(z_n)}\de s,
	\end{multline*}
	and both the above integrals goes to $0$, because $\dot z_n-\dot z$,
	in the first one, is bounded in $L^1$ and, in the second one,
	weakly converges to $0$ in $H^1$.
	Thus, we have obtained that  
	\begin{equation}
		\label{almostdone}
		\int_0^1 \partial_v
		L_c(z_n,\dot z_n)[\dot z_n-\dot z]\de s\longrightarrow 0.
	\end{equation}
	Using that $z_n$ pointwise converges to $z$ and $\dot z_n$ is bounded in $L^1$, from \eqref{partialvpinched} and Lebesgue's dominated convergence theorem we get
	\[
		\int_0^1 \partial_vL_c(z_n,\dot z)[\dot z_n-\dot z]\de s-\int_0^1 \partial_v
		L_c(z,\dot z)[\dot z_n-\dot z]\de s\longrightarrow 0.
	\] 
	As $\dot z_n-\dot z\to 0$ weakly in $H^1$,
	also $\inte\partial_v L_c(z,\dot z)[\dot z_n-\dot z]\de s\rightarrow 0$,
	and then from the above limit 
	\begin{equation}
	\label{almostdone2}
	\int_0^1 \partial_vL_c(z_n,\dot z)[\dot z_n-\dot z]\de s\longrightarrow 0.
	\end{equation}
	From \eqref{eq:boundC1},  \eqref{almostdone} and \eqref{almostdone2} we then get 
	\[
		\frac{c_1}{4}\inte |\dot z_n-\dot z|^2\de s\leq \int_0^1 \big(\partial_v
			L_c(z_n,\dot z_n)-\partial_v
		L_c(z_n,\dot z)\big)[\dot z_n-\dot z]\de s\longrightarrow 0,
	\]
	which implies that $z_n\to z$ strongly in $H^1$.
	Moreover, there exists a subsequence such that $\dot z_n(s)\to \dot z(s)$
	a.e. on $[0,1]$ and then 
	\[
		N\big(z_n(s),\dot z_n(s)\big)\to N\big(z(s), \dot z(s)\big),
		\quad\text{a.e. on $[0,1]$},
	\]
	so that also $N(z,\dot z)$ is constant a.e. on $[0,1]$,
	i.e. $z\in \mathcal N_{p,q}$ as required.
\end{proof}
From Propositions~\ref{prop:Jbounded} and Theorem~\ref{PS}, $\J$ is bounded from below and satisfies the Palais-Smale condition on $\J^c$.
Since $\mathcal N_{p,q}$ is only a $C^1$ submanifold of $\Omega_{p,q}^{1,2}$
(recall Proposition~\ref{prop:Npqmanifold}) then the exponential  map of its
infinite dimensional Riemannian structure is not well-defined, and we cannot
invoke  Ekeland's variational principle to conclude that a minimizer of $\J$
exists (see \cite[Proposition 5.1]{Ekelan74}).
Anyway, from \cite[Theorem 3.1]{szulkin1988}
(which, nevertheless,  is based on Ekeland's variational principle)
or  as a straightforward  consequence of the
noncritical interval theorem  (see \cite[Theorem (2.15)]{corvellec1993}),
we actually get the existence of a  minimizer of $\J$. Summing up, we
have the following result:
\begin{teo}
	\label{teo:main}
	Let $L\colon TM \to \mathbb{R}$ be an indefinite Lagrangian satisfying
	Assumptions~ \ref{ass:L}--\ref{ass:bounds}.
	Assume also that $\mathcal{N}_{p,q}$ is $c$-bounded, for some $c\in \R$.
	Then there exists a curve $z \in \mathcal N_{p,q}$
	which minimizes $\J$ and it is then
	a critical point of $\A$ on $\Omega_{p,q}^{1,2}$.
\end{teo}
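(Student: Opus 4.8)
The plan is to assemble the two properties already established for the reduced action --- lower boundedness (Proposition~\ref{prop:Jbounded}) and the Palais--Smale condition on $\J^c$ (Theorem~\ref{PS}) --- into the existence of a global minimizer of $\J$, and then to transport this minimizer to a critical point of $\A$ by means of Theorem~\ref{teo:punticriticiLiberi}.

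First I would set $m:=\inf_{\mathcal N_{p,q}}\J$, which is a real number by Proposition~\ref{prop:Jbounded}. Since $c$-boundedness of $\mathcal N_{p,q}$ forces $\J^c\neq\emptyset$, there is a curve with $\J$-value at most $c$, whence $m\le c$. If $m=c$ then $\J\equiv c$ on the nonempty set $\J^c$ and the infimum is attained trivially, so I may assume $m<c$.

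The central step is to produce a \emph{minimizing} Palais--Smale sequence. Recall that $\mathcal N_{p,q}$ is a closed submanifold of the complete Riemannian manifold $\Omega_{p,q}^{1,2}$ (Proposition~\ref{prop:Npqmanifold}), hence a complete metric space for the induced distance, and that $\J$ is $C^1$ and bounded from below. I would then apply Ekeland's variational principle to obtain a sequence $(z_n)\subset\mathcal N_{p,q}$ with $\J(z_n)\to m$ and, exploiting that $\J$ is $C^1$ along curves in $\mathcal N_{p,q}$, with $\diff\J(z_n)\to 0$. Because $m<c$, we have $z_n\in\J^c$ for $n$ large, so $(z_n)$ is a Palais--Smale sequence for $\J$ on $\J^c$. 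By Theorem~\ref{PS} a subsequence converges strongly to some $z\in\mathcal N_{p,q}$, and continuity of $\J$ gives $\J(z)=m$; thus $z$ minimizes $\J$ and is in particular a critical point of $\J$. Finally, Theorem~\ref{teo:punticriticiLiberi} yields that $z$ is a critical point of $\A$ on $\Omega_{p,q}^{1,2}$, as desired. Equivalently, one may invoke Szulkin's theorem or, observing that a non-attained infimum would not be a critical value of $\J$, the noncritical interval theorem together with Theorem~\ref{PS}, to obtain the minimizer.

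The delicate point is precisely the construction of the Palais--Smale sequence. Since Proposition~\ref{prop:Npqmanifold} only guarantees that $\mathcal N_{p,q}$ carries a $C^1$ (not $C^2$) differentiable structure, the usual smooth pseudo-gradient flow and exponential-map deformations are unavailable, and the argument must remain at the level of a complete metric space. This is why the Ekeland route (or the equivalent weak-slope formulation underlying the noncritical interval theorem) is needed: Ekeland's principle requires only completeness and lower semicontinuity, and I expect the main technical care to lie in converting its metric conclusion into the statement $\diff\J(z_n)\to 0$ used in the definition of the Palais--Smale condition, a step that relies solely on the $C^1$ regularity of $\J$ rather than on any smoothness of the underlying manifold.
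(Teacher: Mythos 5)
Your proposal is correct and follows essentially the same route as the paper: both combine Proposition~\ref{prop:Jbounded}, Theorem~\ref{PS} and an Ekeland-type argument valid on the merely $C^1$ manifold $\mathcal N_{p,q}$ to produce the minimizer, and then pass to a critical point of $\A$ via Theorem~\ref{teo:punticriticiLiberi}. The ``delicate point'' you isolate --- converting Ekeland's metric conclusion into $\diff\J(z_n)\to 0$ using only $C^1$ regularity, without exponential maps or smooth pseudo-gradients --- is exactly the content of the results the paper cites for this step, namely \cite[Theorem 3.1]{szulkin1988} and the noncritical interval theorem of \cite{corvellec1993}, so your inlined argument and the paper's citation are the same proof.
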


\begin{remark}
	The critical points of $\A$ on $\Omega_{p,q}^{1,2}$,
	whose existence is ensured by Theorem \ref{teo:main},
	satisfy the Euler-Lagrange equation \eqref{eq:Euler-Lagrange} in weak sense.
	We will show in Appendix~\ref{sec:regularity}
	that they also satisfy it in classical sense.
\end{remark}

\section{Multiplicity of critical points}
In this section we obtain a multiplicity result for critical points
of the functional $\A$ by using  Ljusternik-Schnirelmann theory,
provided that $M$ is a not contractible.
Let us recall the definition of Ljusternik-Schnirelmann category.
Let $A$ be a non-empty subset of a topological space $B$;
the {\em Lusternik-Schnirelman category} of a $A$,
denoted by $\mathrm{cat}_B(A)$,
is the least integer $n$ such
that $A$ can be covered $n$ closed contractible (in $B$) subsets of $B$.
If no such a number exists then $\cat_B(A)=+\infty$.
If $A = \emptyset$, we set $\cat_B{A} = 0$.
We denote $\cat_B(B)$ with $\cat(B)$.

By \cite[Proposition 3.2]{fadell1991},
we know that if $M$ is a non-contractible manifold then
$\cat(\Omega_{p,q}^{1,2}) = + \infty$.
This  fact can be exploited together with the following proposition,
which is a straightforward corollary of 
\cite[Theorem (3.6)]{corvellec1993}
and allows to prove the multiplicity of critical points for 
a functional of class $C^1$ defined on a manifold with the same regularity,
as it is in our setting.

\begin{teo}[Corvellec-Degiovanni-Marzocchi]\label{CDM}
	Let $\mathcal M$ be a (possibly infinite dimensional)
	$C^1$  Riemannian manifold
	and $f:\mathcal M \to \R$ be a bounded from below $C^1$ functional
	satisfying the Palais-Smale condition.

	Then $f$ has at least $\cat(\mathcal{M})$ critical points.
	Moreover, if $\cat(\mathcal{M})=+\infty$ then $\sup f=+\infty$
	and there exists a sequence $(c_m)_m$ of critical values such that $c_m\to +\infty$.    
\end{teo}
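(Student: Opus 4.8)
The plan is to deduce the statement directly from the metric critical point theory of \cite{corvellec1993}, the point being that the merely $C^1$ regularity of $\mathcal M$ rules out the classical Ljusternik--Schnirelmann argument, which integrates a pseudo-gradient vector field to build deformations and therefore needs a locally Lipschitz gradient. First I would regard $\mathcal M$ as a metric space through its Riemannian distance and recall that, for a $C^1$ functional on a $C^1$ Riemannian manifold, the \emph{weak slope} $|df|(x)$ of \cite{corvellec1993} coincides with the dual norm $\|\diff f(x)\|$ of the differential. Hence the points where $\diff f$ vanishes are exactly the metric critical points, and the Palais--Smale condition assumed here is precisely the Palais--Smale condition formulated via the weak slope. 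This makes the deformation machinery of \cite{corvellec1993}, in particular the noncritical interval theorem \cite[Theorem~(2.15)]{corvellec1993}, available for $f$ on $\mathcal M$.

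Next I would set up the standard Ljusternik--Schnirelmann minimax scheme. For every integer $m\ge 1$ put
\[
	\Gamma_m:=\{A\subset\mathcal M:\ \cat_{\mathcal M}(A)\ge m\},
	\qquad
	c_m:=\inf_{A\in\Gamma_m}\ \sup_{x\in A} f(x).
\]
Since $\cat_{\mathcal M}(\mathcal M)=\cat(\mathcal M)$, the class $\Gamma_m$ is nonempty whenever $m\le\cat(\mathcal M)$, the monotonicity of the category under inclusion gives $c_1\le c_2\le\cdots$, and each $c_m\ge\inf_{\mathcal M}f>-\infty$ because $f$ is bounded from below. With these classes, the first conclusion is exactly Theorem~(3.6) of \cite{corvellec1993}: every finite value $c_m$ is a critical value of $f$, and whenever $m$ consecutive values coincide, $c_j=\cdots=c_{j+m-1}=c$, the critical set $K_c:=\{x\in\mathcal M:\ \diff f(x)=0,\ f(x)=c\}$ satisfies $\cat_{\mathcal M}(K_c)\ge m$. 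Counting critical points with these multiplicities for $j$ ranging up to $\cat(\mathcal M)$ produces at least $\cat(\mathcal M)$ critical points.

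It remains to treat the case $\cat(\mathcal M)=+\infty$, for which the $c_m$ are defined for all $m\in\N$ and form a nondecreasing sequence of critical values. I would argue that they cannot stay bounded: if $c_m\to c<+\infty$, then the Palais--Smale condition forces the critical set at levels $\le c$ to be compact, hence of some finite category $N$; the deformation theory then yields $\cat_{\mathcal M}(\{f\le c+\varepsilon\})\le N$ for small $\varepsilon>0$, while $c_{N+1}\le c$ gives, by definition of the minimax value, a set $A\subset\{f\le c+\varepsilon\}$ with $\cat_{\mathcal M}(A)\ge N+1$ --- a contradiction. Therefore $c_m\to+\infty$ and, a fortiori, $\sup f=+\infty$. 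The main obstacle throughout is precisely the $C^1$ (rather than $C^{1,1}$ or smooth) structure of $\mathcal M$: once the identification $|df|=\|\diff f\|$ is in place, so that our hypotheses match those of \cite{corvellec1993}, the Ljusternik--Schnirelmann conclusions follow from their abstract deformation theory without ever constructing a gradient flow.
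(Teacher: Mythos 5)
Your handling of the first conclusion is exactly the paper's: the paper gives no proof of Theorem~\ref{CDM} beyond citing \cite[Theorem (3.6)]{corvellec1993} and supplying precisely your dictionary (for a $C^1$ functional on a $C^1$ Riemannian manifold the weak slope equals $\|\diff f(x)\|$, and such a manifold is weakly locally contractible); note that the statement of \cite[Theorem (3.6)]{corvellec1993} already contains the $\cat=+\infty$ clause, and that two points you leave implicit are needed to invoke it: completeness of $\mathcal M$ as a metric space (CDM's deformation machinery requires it), and the fact that CDM define category via open coverings, which the paper reconciles with closed coverings through the ANR property of metrizable manifolds.

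The genuine gap is in your self-contained proof of the $\cat=+\infty$ clause, at the step ``the deformation theory then yields $\cat_{\mathcal M}(\{f\le c+\varepsilon\})\le N$'', where $N$ is the category of the critical set at levels $\le c$. This is not a consequence of any deformation lemma, and it is in fact \emph{false} under exactly your hypotheses: take $\mathcal M=T^2$ (flat $2$-torus, so complete, compact, PS automatic) and $f$ a Morse function with critical values $0<1<2<3$; for $c=5/2$ the critical set at levels $\le c$ consists of three points, whose category in the path-connected space $T^2$ is $1$ (a tree through them is contractible in $T^2$), while $\{f\le c+\varepsilon\}$ is the torus minus a disk, which contains a homotopically essential circle and hence has $\cat_{T^2}\ge 2$. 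The reason your intuition fails is that $f$ being bounded below forbids pushing points downward out of the way, and a negative pseudo-gradient flow compresses a sublevel onto (roughly) the union of the unstable sets of the critical points --- here a wedge of two circles --- not into an arbitrarily small neighborhood $U$ of the critical set; no fixed-time deformation maps the sublevel into $U$. (Two further untreated points: critical values may accumulate at $c$ from above, so even defining $N$ needs a fattened level range; and if some $c_m=+\infty$ your dichotomy produces no diverging sequence of critical values.) The correct argument, which is how this clause is established in \cite{corvellec1993}, never bounds the category of a sublevel: assuming $c_m\nearrow \bar c<+\infty$, one applies the deformation theorem only at the single level $\bar c$, with $U$ a closed neighborhood of the compact set $K_{\bar c}$ satisfying $\cat_{\mathcal M}(U)=N<+\infty$, obtaining $\varepsilon>0$ and $\eta$ with $\eta_1\big(\{f\le \bar c+\varepsilon\}\setminus U\big)\subset\{f\le \bar c-\varepsilon\}$; then one picks $m$ with $c_m>\bar c-\varepsilon$ and $A$ with $\cat_{\mathcal M}(A)\ge m+N$ and $\sup_A f\le \bar c+\varepsilon$, so that $\cat_{\mathcal M}(A\setminus U)\ge m$ and $\eta_1(A\setminus U)\subset\{f\le\bar c-\varepsilon\}$ force $c_m\le \bar c-\varepsilon$, a contradiction. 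The simplest repair of your write-up is to read the $\cat=+\infty$ clause off from \cite[Theorem (3.6)]{corvellec1993} as well, which is what the paper does.
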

\begin{remark}
	Actually \cite[Theorem (3.6)]{corvellec1993} is stated for a continuous
	functional $f$ on a complete metric space $X$ with a critical point defined by
	using the notion of {\em weak slope} introduced in \cite{Degiovanni1994}.
	Points with vanishing weak slope are standard critical  points if $f$ is  a
	$C^1$ functional on a Riemannian manifold.
	The metric space must also be  {\em weakly locally contractible},
	meaning that  each $x\in X$ admits a neighborhood
	contractible in $X$.
	Notice that if $X$ is weakly locally contractible then,
	for each $x\in X$, $\cat_X(\{x\})=1$.
	A $C^1$  Riemannian manifold  is clearly
	weakly locally contractible (it is enough to take a small neighborhood of $x$
	diffeomorphic to a ball  in the model Hilbert space).
	Thus, for example, both $\Omega_{p,q}^{1,2}$ and $\mathcal N_{p,q}$
	are  weakly locally contractible,
	the latter a fortiori being also a strong deformation retract of
	$\Omega_{p,q}^{1,2}$ if  $K$ is complete
	(see Proposition~\ref{prop:deformation-retract}).
	Finally, we notice that in \cite{corvellec1993} the definition of 
	Ljusternik-Schnirelman category is
	given with open coverings instead of closed one.
	This is equivalent to the definition with closed coverings in every ANR space;
	since metrizable manifolds are ANR (see \cite[Theorem 5]{Palais66a}),
	the two definitions are then equivalent for $\mathcal N_{p,q}$. 
\end{remark}

Let us now state the main result of this section.
\begin{teo}
	\label{teo:multiplicity}
	Let $M$ be a non-contractible manifold and $L\colon TM \to \mathbb{R}$
	a Lagrangian that satisfies Assumptions \ref{ass:L}--\ref{ass:bounds}.
	If $K$ is a complete vector field and
	$\mathcal{N}_{p,q}$ is $c$-bounded for all $c \in \mathbb{R}$,
	then there exists a sequence $(z_n)_{n\in\mathbb{N}}\subset \Omega_{p,q}^{1,2}$
	of critical points of $\A$ such that
	\[
		\lim_{n\to\infty}\A(z_n) = + \infty.
	\]
\end{teo}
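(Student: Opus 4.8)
The plan is to reduce the statement to an application of the abstract Ljusternik--Schnirelmann theorem of Corvellec--Degiovanni--Marzocchi (Theorem~\ref{CDM}) to the reduced action functional $\J$ on the $C^1$ Riemannian manifold $\mathcal N_{p,q}$. By Theorem~\ref{teo:punticriticiLiberi}, the critical points of $\J$ on $\mathcal N_{p,q}$ are exactly the critical points of $\A$ on $\Omega_{p,q}^{1,2}$, and on $\mathcal N_{p,q}$ the two functionals coincide; thus a sequence of critical values of $\J$ diverging to $+\infty$ yields precisely the desired sequence $(z_n)_n$ with $\A(z_n)\to+\infty$. Everything therefore comes down to checking the three hypotheses of Theorem~\ref{CDM}: lower boundedness, the (global) Palais--Smale condition, and $\cat(\mathcal N_{p,q})=+\infty$.

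First I would verify lower boundedness and the Palais--Smale condition. Since $\mathcal N_{p,q}$ is assumed $c$-bounded for \emph{every} $c\in\R$, Proposition~\ref{prop:Jbounded} gives that $\J$ is bounded from below, while Theorem~\ref{PS} gives the Palais--Smale condition on each sublevel $\J^c$. These two facts upgrade to the global Palais--Smale condition required by Theorem~\ref{CDM}: any sequence $(z_n)_n$ with $\J(z_n)$ bounded and $\diff\J(z_n)\to 0$ lies, for $n$ large, in a fixed sublevel $\J^c$, on which Theorem~\ref{PS} provides a strongly convergent subsequence.

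The key step is the category computation $\cat(\mathcal N_{p,q})=+\infty$. Here I would use the completeness of $K$: by Proposition~\ref{prop:deformation-retract}, $\mathcal N_{p,q}$ is a strong deformation retract of $\Omega_{p,q}^{1,2}$, so the inclusion is a homotopy equivalence. Since the (absolute) Ljusternik--Schnirelmann category is a homotopy invariant---and, both spaces being metrizable manifolds and hence ANR, the open and closed versions of the category coincide, as recalled in the remark following Theorem~\ref{CDM}---we obtain $\cat(\mathcal N_{p,q})=\cat(\Omega_{p,q}^{1,2})$. Finally, because $M$ is non-contractible, \cite[Proposition 3.2]{fadell1991} yields $\cat(\Omega_{p,q}^{1,2})=+\infty$, whence $\cat(\mathcal N_{p,q})=+\infty$.

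With the three hypotheses in hand, the last part of Theorem~\ref{CDM} produces a sequence $(c_m)_m$ of critical values of $\J$ with $c_m\to+\infty$; choosing for each $m$ a critical point $z_m\in\mathcal N_{p,q}$ of $\J$ at level $c_m$ and invoking Theorem~\ref{teo:punticriticiLiberi} gives critical points of $\A$ on $\Omega_{p,q}^{1,2}$ with $\A(z_m)=\J(z_m)=c_m\to+\infty$. I expect the only genuinely delicate point to be the transfer of the category through the deformation retraction---that is, ensuring the homotopy equivalence coming from Proposition~\ref{prop:deformation-retract} suffices to equate the absolute categories of $\mathcal N_{p,q}$ and $\Omega_{p,q}^{1,2}$ (recall that $\cat_B(A)$ was defined relative to the ambient space $B$). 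All the remaining analytical content has already been established in the preceding results, so the rest of the argument is a bookkeeping assembly.
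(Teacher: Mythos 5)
Your proposal is correct and follows essentially the same route as the paper's own proof: category transfer via Proposition~\ref{prop:deformation-retract} (using completeness of $K$) combined with \cite[Proposition 3.2]{fadell1991}, lower boundedness from Proposition~\ref{prop:Jbounded} and the Palais--Smale condition from Theorem~\ref{PS} upgraded to all of $\mathcal N_{p,q}$ (the paper's Remark~\ref{psbounded}), then Theorem~\ref{CDM} and the translation back to critical points of $\A$ via Theorem~\ref{teo:punticriticiLiberi}. Your explicit attention to the sublevel-to-global Palais--Smale upgrade and to the open-versus-closed-coverings issue for the category only makes explicit what the paper delegates to its remarks, so there is nothing further to add.
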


Like  in the existence result given in Theorem \ref{teo:main},
we cannot work directly on $\Omega_{p,q}^{1,2}$ to prove Theorem~\ref{teo:multiplicity},
where $\A$ is not bounded from below and does not satisfy the Palais-Smale condition,
but we have to restrict our analysis on $\mathcal{N}_{p,q}$.

Let us first show that when $K$ is complete then $\mathcal N_{p,q}$
is a strong deformation retract of $\Omega^{1,2}_{p,q}$
(so that the Ljusternik-Schnirelmann category is preserved), namely there exists a homotopy
$H\colon \Omega_{p,q}^{1,2}\times [0,1] \to \Omega_{p,q}^{1,2}$
	such that, for all
	$z \in \Omega_{p,q}^{1,2}$,
	$w \in \mathcal{N}_{p,q}$
	and $t \in [0,1]$,
we have $H(z,0) = z$, $H(z,1)\in \mathcal{N}_{p,q}$ and $H(w,t) = w$.
Next proposition extends \cite[Proposition 5.9]{giannoni1999}
from stationary Lorentzian manifold to our setting.
\begin{proposition}
	\label{prop:deformation-retract}
	Assume that $K$ is a complete vector field, then  $\mathcal{N}_{p,q}$ is a
	strong deformation retract of $\Omega_{p,q}^{1,2}$.
\end{proposition}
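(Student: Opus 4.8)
The plan is to use the flow of the infinitesimal symmetry $K$ to slide an arbitrary curve onto one with constant Noether charge while keeping the endpoints fixed. Since $K$ is complete, its flow $\psi\colon\R\times M\to M$ is a globally defined $C^1$ map with $\partial_\tau\psi(\tau,x)=K(\psi(\tau,x))$. For $z\in\Omega_{p,q}^{1,2}$ and $\sigma\in W^{1,2}_0([0,1],\R)$, I set $z^\sigma(s):=\psi(\sigma(s),z(s))$; this is again a $W^{1,2}$-curve with the same endpoints (because $\sigma(0)=\sigma(1)=0$), with velocity $\dot z^\sigma=\sigma'\,K(z^\sigma)+\partial_x\psi(\sigma,z)[\dot z]$. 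The first key step is to compute how the Noether charge transforms. Using that $d$ and $Q(K)$ are invariant under the flow of $K$ and that $Q$, regarded as a function on $TM$, is invariant under the complete lift $K^c$ (all established in Proposition~\ref{prop:Lproperties}, together with $\psi^c(\tau,x,v)=(\psi(\tau,x),\partial_x\psi(\tau,x)[v])$), I would obtain the affine transformation law
\[
	N(z^\sigma,\dot z^\sigma)=N(z,\dot z)+\sigma'\,Q_z(K),\qquad\text{a.e. on }[0,1].
\]

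The second step is to choose $\sigma$ making the right-hand side constant. Since $Q_x(K)\le -k_1<0$ by Assumption~\ref{ass:bounds}, I can solve $N(z,\dot z)+\sigma'Q_z(K)=c$ in normal form, $\sigma'=(c-N(z,\dot z))/Q_z(K)$, and integrate with $\sigma(0)=0$; imposing $\sigma(1)=0$ to preserve the second endpoint fixes the unique constant
\[
	c(z)=\left(\inte\frac{1}{Q_z(K)}\de s\right)^{-1}\inte\frac{N(z,\dot z)}{Q_z(K)}\de s,
\]
and hence a unique $\sigma_z\in W^{1,2}_0([0,1],\R)$ with $z^{\sigma_z}\in\mathcal N_{p,q}$. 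The construction is well posed because $N(z,\dot z)\in L^2$ (as $Q$ is linear in the velocity with continuous coefficients and $d$ is bounded on the compact image of $z$), so $\sigma_z'\in L^2$. If $w\in\mathcal N_{p,q}$, then $N(w,\dot w)$ is already constant, whence $c(w)=N(w,\dot w)$ and $\sigma_w\equiv 0$. I would then define
\[
	H(z,t)(s):=\psi\big(t\,\sigma_z(s),z(s)\big),\qquad (z,t)\in\Omega_{p,q}^{1,2}\times[0,1],
\]
so that $H(z,0)=z$, $H(z,1)=z^{\sigma_z}\in\mathcal N_{p,q}$, the endpoints are preserved for every $t$ since $\sigma_z(0)=\sigma_z(1)=0$, and $H(w,t)=w$ for all $w\in\mathcal N_{p,q}$ because $\sigma_w\equiv 0$. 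These are exactly the three defining properties of a strong deformation retraction.

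The remaining and main task is to verify that $H$ is well defined and continuous for the $W^{1,2}$-topology. That $H(z,t)\in\Omega_{p,q}^{1,2}$ follows by differentiating the composition, $\tfrac{\de}{\de s}H(z,t)=t\sigma_z'\,K(H(z,t))+\partial_x\psi(t\sigma_z,z)[\dot z]\in L^2$. For continuity I would take $z_n\to z$ in $W^{1,2}$ and $t_n\to t$, use that $W^{1,2}$-convergence yields $z_n\to z$ uniformly and $\dot z_n\to\dot z$ in $L^2$, and establish in turn: $c(z_n)\to c(z)$; then $\sigma_{z_n}\to\sigma_z$ in $W^{1,2}$, for which the decisive point is $Q_{z_n}(\dot z_n)\to Q_z(\dot z)$ in $L^2$, obtained by writing the difference as $Q_{z_n}(\dot z_n-\dot z)+(Q_{z_n}-Q_z)(\dot z)$ and bounding it by the $L^2$-convergence of $\dot z_n$ together with uniform convergence of the continuous coefficients of $Q$ on a common compact set (dominated convergence for the second summand); and finally that post-composition with the $C^1$ flow $\psi$ preserves $W^{1,2}$-convergence. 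I expect this last point to be the principal obstacle: one must control the velocity term $\partial_x\psi(t_n\sigma_{z_n},z_n)[\dot z_n]$, which couples convergence of the flow derivative $\partial_x\psi$ along the curves with the $L^2$-convergence of $\dot z_n$, as well as the term $t_n\sigma_{z_n}'\,K(H(z_n,t_n))$. The uniform lower bound $-Q(K)\ge k_1>0$ from Assumption~\ref{ass:bounds} is what keeps all the denominators under control throughout.
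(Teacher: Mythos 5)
Your proposal is correct and follows essentially the same route as the paper: the paper's Lemma~\ref{lem:F-map} derives the same transformation law $\phi'\,Q(K(z))+N(z,\dot z)=C$ via the invariance properties from Proposition~\ref{prop:Lproperties}, solves the same ODE with the same constant \eqref{eq:def-C-ODE-phi} to force $\phi(0)=\phi(1)=0$, and the paper then defines the identical homotopy $H(z,t)=\psi(t\phi,z)$ with $\phi\equiv 0$ on $\mathcal N_{p,q}$. Your extra care about $W^{1,2}$-continuity of $H$ addresses a point the paper treats only in one line (regularity of $\Psi$ in Lemma~\ref{lem:F-map}), and your appeal to the uniform bound $-Q(K)\ge k_1$ from Assumption~\ref{ass:bounds} is slightly more than needed, since along curves confined to a compact set the continuity and negativity of $Q(K)$ from Assumption~\ref{ass:L} already control the denominators.
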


In the proof of  Proposition \ref{prop:deformation-retract},
it will be useful  the following preliminary result.
\begin{lemma}
	\label{lem:F-map}
	Let the vector field $K$ be complete 
	and let $\psi\colon \mathbb{R}\times M \to M $
	be its flow.
	Then, for every $z \in \Omega^{1,2}_{p,q}$ there exists a uniquely defined
	function $\phi\in H^{1}_0([0,1], \R)$
	such that 
	\begin{equation}
		\label{eq:def-w-flow}
		\psi\big(\phi(\cdot),z(\cdot)\big) \in \mathcal{N}_{p,q}.
	\end{equation}
	Moreover, defining $\Psi\colon \Omega_{p,q}^{1,2} \to \mathcal{N}_{p,q}$
	as 
	\[		\big(\Psi(z)\big)(s):= \psi\big(\phi(s),z(s)\big),
	\]
	the function $\Psi$ is $C^1$.
\end{lemma}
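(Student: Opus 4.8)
The plan is to pin down $\phi$ by forcing the Noether charge of the deformed curve to be constant, obtaining a closed formula that simultaneously yields existence, uniqueness and regularity.

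First I would express the Noether charge of $w:=\psi(\phi(\cdot),z(\cdot))$ in terms of $z$ and $\phi$. Since $\psi$ is the flow of $K$, one has $\partial_t\psi(\phi(s),z(s))=K_{w(s)}$, whence
\[
\dot w(s)=\dot\phi(s)\,K_{w(s)}+\partial_x\psi(\phi(s),z(s))[\dot z(s)]\qquad\text{a.e. on }[0,1].
\]
I would then use the three invariance properties already at our disposal: the identity $K^c(Q)=0$ from Proposition~\ref{prop:Lproperties}-\eqref{LflowK} gives $Q_w(\partial_x\psi(\phi,z)[\dot z])=Q(\dot z)$ (because the flow of $K^c$ is $\psi^c(t,x,v)=(\psi(t,x),\partial_x\psi(t,x)[v])$ and $Q$ is constant along it); the invariance of $Q(K)$ under the flow of $K$, from the same proposition, gives $Q(K(w))=Q(K(z))$; and the invariance of $d$ from Assumption~\ref{ass:L}-\eqref{negativeQKhyp} gives $d(w)=d(z)$. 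Combining these I obtain the key transformation formula
\[
N(w,\dot w)=\dot\phi\,Q(K(z))+N(z,\dot z)\qquad\text{a.e. on }[0,1].
\]

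The requirement $w\in\mathcal N_{p,q}$ is thus equivalent to the existence of a constant $c$ with $\dot\phi\,Q(K(z))+N(z,\dot z)=c$ a.e. Since $Q(K(z))\le -k_1<0$ by \eqref{eq:boundNoether}, I can solve $\dot\phi=(c-N(z,\dot z))/Q(K(z))$ and integrate from $0$ with $\phi(0)=0$; the boundary condition $\phi(1)=0$ then forces
\[
c=\left(\inte\frac{1}{Q(K(z))}\,\de s\right)^{-1}\inte\frac{N(z,\dot z)}{Q(K(z))}\,\de s,
\]
which is well defined because the first integral is strictly negative. This determines $c$, hence $\dot\phi$, hence $\phi$, \emph{uniquely}. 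Moreover $\dot\phi\in L^2$: indeed $N(z,\dot z)=Q(\dot z)+d(z)$ lies in $L^2$ (as $Q$ is $C^1$, the image of $z$ is compact, and $\dot z\in L^2$), while $1/Q(K(z))$ is bounded; hence $\phi\in H^1_0([0,1],\R)$ and, since $\psi(0,\cdot)$ is the identity, the endpoints $p,q$ are preserved, giving \eqref{eq:def-w-flow}.

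For the $C^1$ regularity of $\Psi$ I would first show that $z\mapsto\phi$ is $C^1$ and then that $\Psi$ is a composition of $C^1$ maps. Working in local charts as in the proof of Theorem~\ref{PS}, the superposition operators $z\mapsto N(z,\dot z)$ and $z\mapsto 1/Q(K(z))$ are $C^1$ from $\Omega_{p,q}^{1,2}$ into $L^2$ and into $L^\infty$, respectively, their differentiability following from the $C^1$ regularity of the data together with the growth bounds; the displayed formula then exhibits $c$, and the primitive $\phi$, as $C^1$ functions of $z$. Finally, since $\psi$ is a $C^1$ flow, the assignment $(\phi,z)\mapsto\psi(\phi(\cdot),z(\cdot))$ is $C^1$ into $W^{1,2}([0,1],M)$, and by construction its image lies in $\mathcal N_{p,q}$, so $\Psi$ is $C^1$. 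The main obstacle is precisely this last step: justifying the differentiability of the superposition operators and of the flow composition in the $W^{1,2}$ topology, and tracking how the implicitly defined constant $c$ and the primitive $\phi$ inherit that regularity. By contrast, the existence and uniqueness of $\phi$ reduce to the transparent transformation formula and an elementary first-order linear equation.
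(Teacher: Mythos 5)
Your proof is correct and takes essentially the same approach as the paper: both derive the transformation formula $N(w,\dot w)=\phi'\,Q(K(z))+N(z,\dot z)$ from the invariance properties established in Proposition~\ref{prop:Lproperties}, reduce membership in $\mathcal N_{p,q}$ to the same first-order ODE for $\phi$ with the same formula for the constant, and get the $C^1$ regularity of $\Psi$ from that formula. The only (harmless) deviation is that you invoke the uniform bound \eqref{eq:boundNoether} from Assumption~\ref{ass:bounds} to control $1/Q(K(z))$, whereas the paper uses only $Q(K)\neq 0$ from Assumption~\ref{ass:L}, the needed boundedness following anyway from continuity of $Q(K)$ on the compact image of $z$.
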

\begin{proof}
	Let $z\in \Omega_{p,q}^{1,2}$
	and, for each $\phi\in H^{1}_0([0,1], \R)$,
	let us denote by $w\colon [0,1] \to M$ the curve
	\begin{equation}
		\label{eq:w}
		w(s) = \psi(\phi(s),z(s))
	\end{equation}

	We want to find $\phi\in H^{1}_0([0,1], \R)$ such that  $w \in \mathcal{N}_{p,q}$, hence
	$w(0) = p$, $w(1) = q$ and 
	\begin{equation}
		\label{NC}
		N(w,\dot{w}) = C,\quad\text{a.e. on $[0,1]$,}
	\end{equation}
	for some constant $C \in \mathbb{R}$.
	By differentiating \eqref{eq:w}, we get 
	\[
		\dot{w}(s) = 
		\partial_t\psi(\phi(s),z(s))\phi'(s)
		+ \partial_x\psi(\phi(s),z(s))[\dot{z}(s)].
	\]
	Substituting this expression in \eqref{NC} and  recalling that $N=Q+d$, we get
	\begin{multline}
		\label{eq:boh1}
		\phi'(s)Q_{w(s)}\big(\partial_t\psi\big(\phi(s),z(s)\big)\big)
		+ Q_{w(s)}\big(\partial_x\psi(\phi(s),z(s))[\dot z(s)]\big)+d\big(w(s)\big)\\
		=\phi'(s)Q_{w(s)}\big(\partial_t\psi\big(\phi(s),z(s)\big)\big)
		+N\big(w(s), \partial_x\psi(\phi(s),z(s))[\dot{z}]\big)=C
	\end{multline}
	which, for each $z\in \Omega^{1,2}_{p,q}$,
	can be seen as a differential equation for $\phi$.

	Let us  rewrite \eqref{eq:boh1} in order to get a simpler equation.
	Since $\psi=\psi(t,x)$ is the flow generated by $K$, we have
	\begin{equation}
		\label{eq:dtpsi-K}
		\partial_t\psi(\phi,z) = K(\psi(\phi,z)) = K(w).
	\end{equation}	
	Using the group property
	$\psi(t_1,\psi(t_2,x)) = \psi(t_1 + t_2,x)$,
	and \eqref{eq:dtpsi-K} we also obtain
	\begin{equation}
		\label{eq:dxpsi-K}
		\partial_x\psi(\phi,z)[K(z)] = K(w).
	\end{equation}
	Moreover, recalling \eqref{eq:L-Kinvariant},
	for every $v \in T_{z(s)}M$ we have
		\[
			L(z(s),v) = L(w(s),\partial_x\psi(\phi(s),z(s))[v]),
		\]
		thus
		\[
			\partial_vL(z(s),v)[K] = 
			\partial_vL(w(s),\partial_x\psi\big(\phi(s),z(s))[v]\big)\big[\partial_x\psi(\phi(s),z(s))[K]\big].
	\]
	By \eqref{noether} and  \eqref{eq:dxpsi-K}, the last equality becomes
	\begin{equation}
		\label{eq:Q_w-eq-Q_x}
		N(z(s),v) = N\big(w(s),\partial_x\psi(\phi(s),z(s))[v]\big),
		\quad \forall v \in T_{z(s)}M.
	\end{equation}
	Substituting $v$ with $\dot z$ in \eqref{eq:Q_w-eq-Q_x},
	we get
	\begin{equation}
		\label{pezzo2}
		N(z(s),\dot z(s)) = N\big(w(s),\partial_x\psi(\phi(s),z(s))[\dot z]\big).
	\end{equation}
	Recalling that $Q(K)$ is invariant by the flow
	(see the proof of Proposition~\ref{prop:Lproperties}-\eqref{LflowK}),
	by \eqref{eq:dtpsi-K} we have
	\begin{equation}
		\label{pezzo1}
		Q_{w(s)}\big(\partial_t\psi\big(\phi(s),z(s)\big)\big)=Q_{w(s)}(K)
		=Q_{z(s)}(K).
	\end{equation}
	Thus, from \eqref{pezzo2} and \eqref{pezzo1}, \eqref{eq:boh1} becomes
	\[
		\phi'Q(K(z))+N(z,\dot z)=C
	\] 
	and since by
	Assumption~\ref{ass:L}, $Q(K(z))$ is   different from $0$,
	we get
	\begin{equation}
		\label{eq:ODE-phi}
		\phi' = \frac{C - N(z,\dot{z})}{Q(K(z))}.
	\end{equation}
	Hence, $\phi$ can be obtained as the solution of \eqref{eq:def-C-ODE-phi}  with
	initial condition $\phi(0) = 0$ and,
	by setting $\int_{0}^{1}\phi'(s)\diff s = 0$,
	we can ensure that $\phi(1) = 0$ by taking
	\begin{equation}
		\label{eq:def-C-ODE-phi}
		C = \left( \int_{0}^{1}\frac{N(z,\dot{z})}{Q(K(z))}\diff s \right)
		\left( \int_{0}^{1}\frac{\de s}{Q(K(z))} \right)^{-1}.
	\end{equation}
	The fact that $\Psi$ is $C^1$ is a simple consequence of the $C^1$-regularity of $N$ and \eqref{eq:ODE-phi}.
\end{proof}

\begin{proof}[Proof of Proposition \ref{prop:deformation-retract}]
	By Lemma \ref{lem:F-map},
	for every $z \in \Omega_{p,q}^{1,2}$, we consider $\phi \in H^{1}_0([0,1],\R)$
	(depending on $z$ and univocally defined as shown in Lemma~\ref{lem:F-map}),  
	such that \eqref{eq:def-w-flow} holds.
	Then, let us define $H\colon \Omega_{p,q}^{1,2}\times[0,1] \to \Omega_{p,q}^{1,2}$
	as
	\[
		H(z,t) := \psi(t\phi,z).
	\]
	Notice that $H(\cdot,0)$ is the identity map on $\Omega_{p,q}^{1,2}$,
	and 
	$H(\Omega_{p,q}^{1,2},1)\subset \mathcal{N}_{p,q}$.
	If $w \in \mathcal{N}_{p,q}$, then $N(w,\dot{w})$ is constant
	and recalling that $\phi$ satisfies \eqref{eq:ODE-phi}
	with $C$ given by \eqref{eq:def-C-ODE-phi},
	we get that
	the corresponding $\phi$ is the zero  function,
	hence $ H(w,t) = w$ for all $t \in [0,1]$.
\end{proof}

We can  now  prove Theorem~\ref{teo:multiplicity}.

\begin{proof}[Proof of Theorem \ref{teo:multiplicity}]
	Since the Ljusternik-Schnirelmann category is a homotopy invariant,
	by Proposition \ref{prop:deformation-retract} and \cite[Proposition 3.2]{fadell1991},
	we have
	$\cat (\mathcal{N}_{p,q}) = \cat (\Omega_{p,q}^{1,2}) = + \infty$.
	From Theorem~\ref{PS}, $\J$ 
	satisfies the Palais-Smale condition on $\mathcal J^c$ for every $c\in \R$ and,
	then, on $\mathcal N_{p,q}$ (recall Remark~\ref{psbounded}).
	Hence, by Theorem~\ref{CDM}, there exists a sequence $(z_n)_n \subset  \mathcal{N}_{p,q}$
	of critical points of $\J$ such that $\J(z_n)\to +\infty$.
	By Theorem \ref{teo:punticriticiLiberi},
	every critical point of $\J$ is a critical point of $\A$,
	and $\mathcal A(z_n)=\J(z_n)$.
\end{proof}

\section{$c$-precompactness and $c$-boundedness}\label{pseudocoercivity} 
In light of  Theorems~\ref{teo:main} and \ref{teo:multiplicity},
it becomes important to give conditions ensuring the $c$-boundedness of $\mathcal N_{p,q}$. 
We firstly need the following definition, introduced in \cite{giannoni1999}.
\begin{definition}
	\label{cprecompact}
	Let $c$ be a real number.
	The set $\mathcal{N}_{p,q}$ is said to be 
	{\em $c$-precompact} if 
	every sequence $(z_n)_n \subset \J^c$
	has a uniformly convergent subsequence.
	We say that $\J$ is {\em pseudocoercive} if $\mathcal N_{p,q}$ is $c$-precompact for all $c\in\R$.
\end{definition}
We are going to show that $c$-boundedness and $c$-precompactness
are essentially equivalent properties for Lagrangians admitting
a local expression of ``product'' type
(see \eqref{eq:splitting-Lchart} below).
As a first step, we notice that Lemma \ref{lem:zH1bounded-Jc} immediately gives one of the implications in the equivalence.
\begin{proposition}
	\label{prop:cboundTOcprecompact}
	Let Assumptions~\ref{ass:L}---\ref{ass:bounds} hold.
	If $\mathcal{N}_{p,q}$ is $c$-bounded,
	then it is $c$-precompact.
\end{proposition}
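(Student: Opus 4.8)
The plan is to observe that this proposition is an immediate corollary of Lemma~\ref{lem:zH1bounded-Jc}, so that essentially no new work is required. The hypotheses of that lemma are precisely that Assumptions~\ref{ass:L}--\ref{ass:bounds} hold and that $\mathcal{N}_{p,q}$ is $c$-bounded, which are exactly the hypotheses of the present statement; and its conclusion is that every sequence $(z_n)_n \subset \J^c$ admits a uniformly convergent subsequence. Comparing this with Definition~\ref{cprecompact}, the latter asserts verbatim that $\mathcal{N}_{p,q}$ is $c$-precompact. Hence the proof reduces to a single invocation of Lemma~\ref{lem:zH1bounded-Jc}.

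For the reader's convenience I would recall the mechanism already established in that lemma rather than re-prove it: the lower estimate \eqref{Jbelow} for $\J$, combined with the bounds \eqref{eq:boundC1}--\eqref{dbounded} of Assumption~\ref{ass:bounds} and the uniform control \eqref{Qbounded} on $Q(\dot z_n)$ coming from $c$-boundedness, yields a uniform $L^2$ bound on $\norm{\dot z_n}$ along any sequence in $\J^c$. The Cauchy-Schwarz inequality then gives uniform equicontinuity and boundedness of the curves $z_n$, and completeness of $(M,g)$ lets Ascoli-Arzelà furnish the uniformly convergent subsequence.

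There is no genuine obstacle here: all the analytic content lives in Lemma~\ref{lem:zH1bounded-Jc}, and the only thing to verify is that its hypotheses coincide with those assumed in the proposition, which they do. The purpose of isolating this statement separately is merely to phrase one direction of the forthcoming equivalence between $c$-boundedness and $c$-precompactness in the language of Definition~\ref{cprecompact}, so that it can be paired with the (harder) converse implication to be established in the rest of the section.
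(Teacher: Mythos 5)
Your proposal is correct and matches the paper exactly: the paper also presents Proposition~\ref{prop:cboundTOcprecompact} as an immediate consequence of Lemma~\ref{lem:zH1bounded-Jc}, whose hypotheses and conclusion coincide verbatim (via Definition~\ref{cprecompact}) with those of the proposition. Your recap of the lemma's mechanism (the $L^2$ bound on $\norm{\dot z_n}$ from \eqref{Jbelow}, \eqref{Qbounded} and Assumption~\ref{ass:bounds}, then Cauchy-Schwarz and Ascoli-Arzel\`a) is likewise the paper's argument.
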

The converse implication holds if $L$
admits a local structure of the type in \eqref{eq:gen-L},
so we give the following definition.

\begin{definition}\label{def:loc}	
	We say that $L$ admits a {\em stationary product type local structure} if 
	for every point $p \in M$ there exist an open precompact neighborhood
	$U_p \subset M$ of $p$,
	a manifold with boundary $S_p$,
	an open interval $I_p = (-\epsilon_p,\epsilon_p)\subset \mathbb{R}$,
	and a diffeomorphism $\phi\colon S_p \times I_p \to U_p$
	such that, named $t$ the natural coordinate of $I_p$,
	\[
		\phi_*(\partial_t) = K\big|_{U_p},
	\]
	and for all $\big((x,t),(\nu,\tau)\big) \in T(S_p \times I_p)$ we have
	\begin{equation}
		\label{eq:splitting-Lchart}
		L \circ \phi_*\big((x,t),(\nu,\tau)\big)
		= L_0(x,\nu) + 2 \big(\omega(\nu)+d(x)/2\big)\tau - \beta(x) \tau^2,
	\end{equation}
	where 
	\begin{itemize}
		\item $L_0 \in C^1(TS_p)$ is a
			Lagrangian on $S_p$ which satisfies
			\eqref{eq:genL-pinched}---\eqref{eq:genL-partialvpinched}
			with respect to the norm $\|\cdot\|_{S_p}$ of the
			metric induced on $S_p$ by the auxiliary Riemannian metric on $M$,
			and it is pointwise strongly convex,
			i.e. it satisfies \eqref{monotone} on $TS_p$
			(with $L_c$ replaced by $L_0$ and $\|\cdot\|$ by $\|\cdot\|_{S_p}$),
			for a continuous function $\lambda\colon S_p\to (0,+\infty)$;
		\item $\omega$ is a $C^1$ one-form on $S_p$;
		\item $d\colon S_p \to \mathbb{R}$ is a $C^1$ function;
		\item $\beta\colon S_p \to (0,+\infty)$ is a positive $C^1$ function.
	\end{itemize}

\end{definition}

Notice that Definition~\ref{def:loc} is satisfied for  
$L(v)=g_L(v,v)$, where $g_L$ is a $C^1$ Lorentzian metric on $M$ having  a
timelike Killing vector field $K$;
in such a case $S_p$ is a spacelike hypersurface in $M$,
$L_0$ is the Riemannian metric induced on it by $g_L$,
$\beta(x)=-g_L(K_x,K_x)$ and $\omega$ is the one-form metrically equivalent to
the orthogonal projection of $K$ on $TS_p$
and $d \equiv 0$
(see, e.g., \cite[Appendix C]{giannoni1999}).
The next result shows that it is satisfied as well by a Lagrangian
fulfilling Assumptions~\ref{ass:L}-\ref{ass:L1}.
\begin{proposition}
	\label{productype}
	Let $L:TM\to \R$ satisfy Assumptions~\ref{ass:L} and \ref{ass:L1}.
	Then it admits a stationary product type local structure.
\end{proposition}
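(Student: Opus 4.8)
The plan is to straighten $K$ by its flow, read off the product expression of $L$, and choose the cross section so that the reduced Lagrangian stays strongly convex. First I would fix $p\in M$. By \eqref{negativeQK} we have $Q(K)<0$ everywhere, so $K$ never vanishes and $Q_p(K_p)\neq 0$; thus $\ker Q_p$ is a hyperplane transversal to $K_p$, and I would take a small $C^1$ hypersurface $S_p\ni p$ (a manifold with boundary, e.g.\ a closed disk) with $T_pS_p=\ker Q_p$. Since $K$ is $C^1$ its flow $\psi$ is jointly $C^1$, and $\phi(x,t):=\psi(t,x)$ defines, for $S_p$ and $I_p=(-\epsilon_p,\epsilon_p)$ small enough, a $C^1$ diffeomorphism of $S_p\times I_p$ onto a precompact neighbourhood $U_p$ of $p$ with $\phi_*(\partial_t)=K|_{U_p}$, which is the ambient datum required in Definition~\ref{def:loc}.

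Next I would read off the local form. In the coordinates furnished by $\phi$ one has $K=\partial_t$, so the $K$-invariance \eqref{eq:Killing-LocCoord} means that $L$ does not depend on $t$, while the Noether charge \eqref{noether} gives $\partial L/\partial\tau=N=Q+d$. Writing $\beta:=-\tfrac12 Q(K)>0$ and $\omega:=\tfrac12 Q|_{TS_p}$, a tangent vector $(\nu,\tau)$ satisfies $Q(\nu,\tau)=2\omega(\nu)+\tau Q(K)=2\omega(\nu)-2\beta\tau$, so $\partial L/\partial\tau$ is affine in $\tau$; integrating it from $\tau=0$ yields exactly the expression \eqref{eq:splitting-Lchart}, with $L_0(x,\nu):=L\circ\phi_*\big((x,0),(\nu,0)\big)$. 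Since $L$ is flow-invariant (Assumption~\ref{ass:L}) and $Q$, $Q(K)$, $d$ are as well (Proposition~\ref{prop:Lproperties}-\eqref{LflowK} and Assumption~\ref{ass:L}), the data $L_0,\omega,\beta,d$ do not depend on $t$, hence are genuine $C^1$ objects on $S_p$ with $\beta>0$. The growth bounds \eqref{eq:genL-pinched}--\eqref{eq:genL-partialvpinched} for $L_0$ then follow from the identity $L_0=L_c(\cdot,0)-2\omega^2/\beta$ (a consequence of \eqref{L}) together with \eqref{pinched}--\eqref{partialvpinched} and the boundedness of $\omega,\beta$ on the precompact $U_p$.

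The hard part will be the pointwise strong convexity of $L_0$, since $L$ itself is indefinite. The mechanism I would use is that on $\ker Q$ the correction term in \eqref{L} vanishes, so $L$ agrees there with the strongly convex $L_c$. Concretely, by flow-invariance $L_0(x,\nu)=L(x,\nu)$ for $x\in S_p$ and $\nu\in T_xS_p\subset T_xM$, and differentiating \eqref{L} gives $\partial_vL=\partial_vL_c-(Q(v)/\beta)Q$; hence for $\nu_1,\nu_2\in T_xS_p$, using \eqref{monotone} for $L_c$ with $v_i=\nu_i$ and $Q(\nu_2-\nu_1)=2\omega_x(\nu_2-\nu_1)$,
\[
	\big(\partial_\nu L_0(x,\nu_2)-\partial_\nu L_0(x,\nu_1)\big)[\nu_2-\nu_1]\ge\Big(\lambda(x)-\tfrac{4\|\omega_x\|_{S_p}^2}{\beta(x)}\Big)\|\nu_2-\nu_1\|_{S_p}^2 .
\]
Here the choice $T_pS_p=\ker Q_p$ is decisive: it forces $\omega_p=0$, so by continuity $\|\omega_x\|_{S_p}$ can be made arbitrarily small by shrinking $S_p$ (and hence $U_p$), turning $\lambda_0(x):=\lambda(x)-4\|\omega_x\|_{S_p}^2/\beta(x)$ into a continuous positive function and establishing \eqref{monotone} for $L_0$. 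The genuine obstacle is precisely that $\ker Q$ need not be integrable, so one cannot take $S_p$ tangent to it along all of $S_p$ (which would give $\omega\equiv 0$ and $L_0=L_c|_{\ker Q}$ directly); instead the argument only makes $\omega$ small, and the strong convexity of $L_c$ must be strong enough—through the term $-4\|\omega_x\|_{S_p}^2/\beta$—to absorb this defect, which is guaranteed on a sufficiently small neighbourhood.
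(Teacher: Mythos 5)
Your proof is correct and follows essentially the same route as the paper's: a hypersurface $S_p$ through $p$ with $T_pS_p=\ker Q_p$, the flow of $K$ as the product diffeomorphism, integration of the affine Noether charge in the fibre direction to obtain \eqref{eq:splitting-Lchart}, growth bounds for $L_0=L|_{TS_p}$ via $L_0=L_c|_{TS_p}+Q^2/Q(K)$, and strong convexity of $L_0$ recovered by shrinking $S_p$ so that the (negative) correction term $\tfrac{2}{Q(K)}Q^2$, which vanishes at $p$, is absorbed by $\lambda$ — exactly the paper's condition \eqref{strong}. The only differences are cosmetic (you keep $\lambda_0$ as a function rather than taking its minimum, and you integrate $\partial L/\partial\tau$ in straightened coordinates instead of integrating $\tfrac{\de}{\de s}L(x,y+s\tau K)$ and then invoking flow invariance).
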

\begin{proof}
	Let us denote by $\D$ the distribution in $TM$
	generated by the kernel of $Q$,
	i.e. for all $z\in M$, $\D_z=\ker Q_z$. Notice that by \eqref{negativeQK}, $\D$ has constant rank equal to $m$ (recall that $\dim(M)=m+1$).
	Let $ \bar z\in M$ and $S_{\bar z}$ be a smooth hypersurface (with boundary) in $M$
	such that $ \bar z\in S_{\bar z}$ and
	$T_{\bar z}S_{\bar z}= \mathcal D_{\bar z}$. We  endow $S_{\bar z}$ with the Riemannian metric induced by the auxiliary Riemannian metric $g$ on $M$ and let us denote its norm with $\|\cdot\|_{S_{\bar z}}$.
	From \eqref{negativeQK}, up to shrink $S_{\bar z}$, we can assume that
	for all $x\in S_{\bar z}$, 	$K_x$ is transversal to $S_{\bar z}$,
	i.e. $T_x M=T_xS_{\bar z}\oplus [K_x]$.
	Using \eqref{L}, we get 
	\[
		\partial_{v}L(x,\nu)
		=\partial_{v}L_c(x,\nu)+\frac{2}{Q(K)}Q(\nu)Q_x,
	\]
	for all $(x,\nu)\in TS_{\bar z}$.
	In particular, $\partial_{v}L(\bar z,\nu)=\partial_{v}L_c(\bar z,\nu)$
	for all $\nu\in T_{\bar z}S_{\bar z}$.
	Considering a smaller hypersurface $S_{\bar z}$ such that 
	\begin{equation}
		\label{strong}
		\lambda_0:=\min_{x\in S_{\bar z}}\left(\lambda(x)+
		\frac{2}{Q(K)}\max_{\|\nu\|_{S_{\bar z}}=1} Q^2_x(\nu)\right)>0,
	\end{equation}
	for all $(x,\nu_1),(x,\nu_2)\in TS_{\bar z}$
	we have
	\begin{multline*}
	\big(\partial_vL(x,\nu_2)-\partial_vL(x,\nu_1)\big)[\nu_2-\nu_1]\\
	=\big(\partial_vL_c(x,\nu_1)-\partial_vL_c(x,\nu_2)\big)[\nu_2-\nu_1]\\
	+\frac{2}{Q(K)}Q^2(\nu_2-\nu_1)\geq \lambda_0\|\nu_2-\nu_1\|^2_{S_{\bar z}}.
	\end{multline*}
	Let $L_0=L|_{TS_{\bar z}}$;
	the above inequality gives then \eqref{monotone} for $L_0$ on $TS_{\bar z}$.
	Since $L_0(x, \nu)=L_c(x,\nu)+\frac{Q_x^2(\nu)}{Q_x(K_x)}$
	and $L_c$ satisfies \eqref{pinched}---\eqref{partialvpinched},
	we also have that $L_0$ satisfies
	\eqref{eq:genL-pinched}--\eqref{eq:genL-partialvpinched}.
	Let us now evaluate $\frac{\de}{\de s} L(x,y+s\tau K)$,
	for any $y\in T_x M$, $x\in M$, and $\tau \in\R$:
	\begin{multline*}
		\frac{\diff}{\diff s} L(x,y+s\tau K)
		=\partial_v L(x,y+s\tau K)[\tau K]
		=\tau\partial_v L(x,y+s\tau K)[K]\\
		=\tau \big(Q(y+s\tau K)+d(x)\big)
		=\tau \big(Q(y)+s\tau Q(K)+d(x)\big).
	\end{multline*}
	Hence, integrating w.r.t. $s$ between $0$ and $1$ we get
	\begin{equation}
		\label{Lyplus}
		L(x, y+\tau K)-L(x,y)=\tau \big(Q(y)+d(x)\big)+\frac{1}{2} \tau^2 Q(K).
	\end{equation}
	Let now $w\in T_x M$, $x\in S_{\bar z}$,
	and $w_S\in T_{x}S_{\bar z}$, $\tau_w\in\R$
	such that $w=w_S+\tau_wK_x$. 
	From \eqref{Lyplus} we get
	\begin{align}
		L(x, w)&=L(x, w_S+\tau_w K)\nonumber\\
				 &=L(x,w_S)+\tau_w \big(Q(w_S)+d(x)\big)+\frac 12 \tau_w^2 Q(K)\nonumber \\
				 &=L_0(x,w_S)+\tau_w \big(Q(w_S)+d(x)\big)+\frac 12 \tau_w^2 Q(K)
				 \label{asplitting}.
	\end{align}
	Thus, we get an expression of the type at the right-hand side of
	\eqref{eq:splitting-Lchart} on $S_{\bar z}$ by defining $\omega$
	as the one-form induced by $Q/ 2$ on $S_{\bar z}$ and
	$\beta(x):=-Q(K)/2$. 
	Since $L$ is invariant by the flow of $K^c$
	we then obtain \eqref{eq:splitting-Lchart} on $S_{\bar z}\times I_{\bar z}$,
	for some open interval $I_{\bar z}$ containing $0$,
	by taking $\phi$ as the restriction to $S_{\bar z}\times I_{\bar z}$ of the flow $\psi$ of $K$ adapted to $S_{\bar z}$,
	i.e. such that $S_{\bar z}=\psi(S_{\bar z}\times\{0\})$.
\end{proof}			
\begin{remark}
	Notice that if the distribution $\D$ generated by the kernel of $Q$ is
	integrable then we can take in the above proof $S_{\bar z}$ equal to an
	integral manifold of $\D$.
	In this case the local expression of $L$ simplifies to
	\[
		L \circ \phi_*\big((x,t),(\nu,\tau)\big)
		= L_0(x,\nu)+d(x)\tau - \beta(x) \tau^2.
	\]
	This can be considered as a generalization of
	the notion of a static Lorentzian metric to an indefinite
	Lagrangian admitting an infinitesimal symmetry satisfying
	Assumptions~\ref{ass:L}-\ref{ass:L1}
	(compare also with \cite{CapSta16, caponio2018}).
\end{remark}
By Proposition~\ref{productype} we obtain
the following generalization of \cite[Lemma 4.1]{giannoni1999}.
\begin{teo}
	\label{teo:cprecbound}
	Let Assumptions \ref{ass:L} and \ref{ass:L1} hold. 
	If $\mathcal N_{p,q}$ is $c$-precompact then it is $c$-bounded.
\end{teo}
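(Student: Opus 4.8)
The plan is to argue by contradiction, converting a blow-up of the (constant) Noether charge along a sublevel sequence into a blow-up of the action, which is impossible on $\J^c$. The engine is the local product structure of Proposition~\ref{productype}, which lets me compute the Lagrangian explicitly once the constraint $N(z,\dot z)=\text{const}$ is imposed. First I would reduce to the relevant case $\J^c\neq\emptyset$ (the only one in which $c$-boundedness can hold) and assume, for contradiction, that $N_c=+\infty$. Then there is a sequence $(z_n)_n\subset\J^c$ with $|C_n|\to+\infty$, where $C_n:=N(z_n,\dot z_n)$ is the a.e.\ constant value of the Noether charge on $z_n\in\mathcal N_{p,q}$. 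By $c$-precompactness, after passing to a subsequence $z_n\to z$ uniformly, so the images lie in a fixed compact set. Covering $z([0,1])$ by product charts $\phi_j\colon S_j\times I_j\to U_j$ (with $\phi_{j*}(\partial_t)=K$) and using a Lebesgue-number argument, I fix once and for all a partition $0=s_0<\dots<s_N=1$ so that $z([s_{j-1},s_j])\subset U_j$ and, for $n$ large, $z_n([s_{j-1},s_j])\subset U_j$ as well.

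In chart $j$ write $z_n=(x_n,t_n)$. By \eqref{eq:splitting-Lchart} the Noether charge reads $N=2\omega(\dot x_n)+d(x_n)-2\beta(x_n)\dot t_n=C_n$; substituting $2\omega(\dot x_n)+d(x_n)=C_n+2\beta(x_n)\dot t_n$ into the same expression for $L$ yields the key identity
\[
 L(z_n,\dot z_n)=L_0(x_n,\dot x_n)+C_n\dot t_n+\beta(x_n)\dot t_n^2 \qquad\text{a.e. on }[s_{j-1},s_j].
\]
On the compact chart closures the data satisfy $0<\beta_0\le\beta\le\beta_1$ and $\|\omega\|,|d|$ are bounded, while pointwise strong convexity \eqref{monotone} (as in Remark~\ref{rem:quadgrowth}) gives $L_0(x,\nu)\ge a_0\|\nu\|_{S_j}^2-A_0$ with uniform $a_0>0$, $A_0$. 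Set $I_n:=\sum_j\int_{s_{j-1}}^{s_j}\|\dot x_n\|_{S_j}^2\,\de s$ and $\Delta_j t_n:=t_n(s_j)-t_n(s_{j-1})$; uniform convergence makes each $\Delta_j t_n$ convergent, hence $\bigl|\sum_j\Delta_j t_n\bigr|\le B$ for $n$ large. Dropping the nonnegative term $\beta\dot t_n^2$ and summing the identity over $j$ gives
\[
 c\ge\J(z_n)\ge a_0 I_n-A_0-|C_n|\,B,
\]
so that $I_n=O(|C_n|)$, and in particular $I_n^{1/2}=O(|C_n|^{1/2})$.

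The contradiction then comes from a single chart $j_0$. Integrating $\dot t_n=\bigl(2\omega(\dot x_n)+d-C_n\bigr)/(2\beta)$ over $[s_{j_0-1},s_{j_0}]$ and rearranging yields $|C_n|\,P_{j_0}\le \beta_0^{-1}\int_{s_{j_0-1}}^{s_{j_0}}|\omega(\dot x_n)|\,\de s+\text{const}$, where $P_{j_0}=\int_{s_{j_0-1}}^{s_{j_0}}(2\beta)^{-1}\de s$ is a fixed positive number; by Cauchy--Schwarz the right-hand side is $O(I_n^{1/2})=O(|C_n|^{1/2})$. Hence $|C_n|=O(|C_n|^{1/2})$, forcing $|C_n|$ to be bounded, which contradicts $|C_n|\to+\infty$.

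The main obstacle, and the step on which everything hinges, is the exact identity $L=L_0+C_n\dot t_n+\beta\dot t_n^2$ on the constraint: it keeps the dangerous indefinite contribution \emph{linear} in $C_n$, whereas a cruder estimate leaves a $C_n^2$ term (with the same weight $1/\beta$ as the positive part) that cancels against it and ruins the bound $I_n=O(|C_n|)$. A secondary technical nuisance is the chart-dependence of the time coordinate $t$, which I handle by only using the boundedness of $\sum_j\Delta_j t_n$ in the action estimate and by extracting the final contradiction from one fixed chart alone.
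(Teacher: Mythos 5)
Your proof is correct, and although it shares the paper's skeleton --- extract a uniformly convergent subsequence via $c$-precompactness, fix a finite cover of the limit curve by stationary-product charts (Proposition~\ref{productype}) with a partition $0=a_0<\dots<a_N=1$, shrink the charts to precompact subsets so that $\omega$, $d$, $\beta$, $\lambda_0$ have uniform bounds, and use the a.e.\ constancy of the Noether charge to control the time component --- the key estimate is genuinely different. The paper substitutes $\dot t_n=\big(\omega(\dot x_n)+d(x_n)/2-C_{z_n}\big)/\beta(x_n)$ into $L$, which produces the dangerous term $-C_{z_n}^2/\beta$; this is neutralized by expressing $C_{z_n}$ through its chart integral formula \eqref{eq:C_zn1} and applying the Cauchy--Schwarz inequality in $L^2$, so that the resulting lower bound \eqref{Jcoercive} for the action contains no $C_{z_n}$ at all. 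The paper thus first obtains the uniform bound $\sup_n\int_0^1\|\dot x_n\|\,\de s<+\infty$ and then reads off boundedness of $C_{z_n}$ from \eqref{eq:C_zn2}, with no contradiction argument. You instead substitute the constraint in the opposite direction, $2\omega(\dot x_n)+d=C_n+2\beta\dot t_n$, obtaining the identity $L=L_0+C_n\dot t_n+\beta\dot t_n^2$ on the constraint, drop the nonnegative quadratic term, and accept an error that is \emph{linear} in $|C_n|$ (controlled by the bounded chart-time variations $\Delta_j t_n$); this yields only $I_n=O(|C_n|)$, and you close with a bootstrap in a single chart, $|C_n|\le C\big(1+I_n^{1/2}\big)\le C'\big(1+|C_n|^{1/2}\big)$, which is incompatible with $|C_n|\to+\infty$. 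Your route is somewhat more elementary (the only inequalities used are $\beta\dot t_n^2\ge 0$ and one scalar Cauchy--Schwarz) and it isolates the real mechanism: the constant charge enters the action linearly but is recovered from the Noether relation only sublinearly, through $I_n^{1/2}$. What the paper's route buys in exchange is the stronger intermediate conclusion that $\int_0^1\|\dot x_n\|^2\,\de s$ is uniformly bounded along the sublevel sequence, independently of the size of the charge, which parallels the coercivity statement of Lemma~\ref{lem:zH1bounded-Jc}. One cosmetic remark: as in the paper, the statement implicitly requires $\J^c\neq\emptyset$ (Definition~\ref{c-boundedness} demands it), and you are right to set this aside explicitly at the start.
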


\begin{proof}
	Let $(z_n)_n \subset \J^c$ be a sequence such that
	\[
		\lim_{n \to \infty} \abs{N(z_n, \dot z_n)}
		= \sup_{z \in \J^c}\abs{N(z,\dot z)}.
	\]
	Moreover, 
	let $(C_{z_n})_n \subset \mathbb{R}$ be the sequence of real numbers such that
	for all $n$
	\[
		C_{z_n}= \frac{1}{2}N\big(z_n(s), \dot{z}_n(s)\big), \quad \text{a.e. in }[0,1].
	\]
	To obtain the thesis, it suffices to prove that $C_{z_n}$ is bounded.
	Since $\mathcal N_{p,q}$ is $c$-precompact we can assume,
	up to pass to a subsequence, that 
	$z_n$ converges uniformly to a curve $z\in \J^c$.
	We can then assume that there exists a finite number of neighborhoods 
	$U_k$, with $k = 1,\dots,N$,
	that cover $z([0,1])$ such that, 
	for some finite sequence 
	$0 = a_0 < a_1 < \dots < a_N = 1$, 
	$z_n([a_{k-1},a_k])\subset U_k$, for all $n$ sufficiently large
	and for all $k = 1,\dots,N$.
	Moreover, by Proposition~\ref{productype}, 
	in each domain $U_k$ we can identify $L$ with $L\circ(\phi_k)_*$ so that $L$, evaluated along a curve $z(s)=\big(x(s),t(s)\big)$ contained in $U_k$, is given by
	\[
		L(z,\dot z)=L_{0,k}(x,\dot{x})+2\big(\omega_k(\dot{x})+d_k(x)/2\big)\dot{t}-\beta_k(x)\dot{t}^{2},
	\]
	(here we are not  writing the point where the one-forms $\omega_k$ are applied).
	Up to replace each $U_k$ by a precompact open subset, we can assume that
	\begin{align}
		\label{eq:def-D0}
				&\max_k(
				\norm{\omega_k})
				=\max_k
				\Big(
					\sup_{\substack{\norm{y} = 1\\y\in TU_k}}\abs{\omega_k(y)} 
				\Big)
				=D_0 < +\infty,
				\intertext{and}
				&\max_k\big(\sup_{x\in U_k}|d_k(x)|\big)=D_1<+\infty\label{D1},
	\end{align}
	reminding that $\norm{y} = \sqrt{g(y,y)}$,
	where $g$ is the auxiliary Riemannian metric.
	Analogously, we have 
	\[
		\Delta =\max_k\Big(\sup_{m_1,m_2 \in U_k}
		\abs{t^k(m_1) - t^k(m_2)}\Big)<+\infty
	\]
	and there also exist  two constants 	
	$\nu,\ \mu$ such that
	\[	0 < \nu \le \beta_k
		\le \mu,\quad\text{for all $k\in\{1,\ldots,N\}$.}
	\]
	In the following, we write $L_{0,k}$, $\omega_k$, $d_k$ and $\beta_k$ without the index $k$.
	In this local charts, let $z_n(s) = \big(x_n(s),t_n(s)\big)$.
	As for \eqref{eq:gen-L} and \eqref{noetherxample},
	we have $N(z_n, \dot z_n) = 2\omega(\dot{x}_n)-2\beta\dot{t}_n+d(x_n)$. 
	Hence,
	\begin{equation}
		\label{eq:tn_Cn}
		\dot{t}_n=\frac{\omega(\dot{x}_n)+d(x_n)/2- C_{z_n}}{\beta(x_n)}.
	\end{equation}
	Defining $T_n^k = t_n(a_k)- t_n(a_{k-1})$, we have
	\begin{equation}
		\label{eq:dn-def}
		T_n^k = \int_{a_{k - 1}}^{a_k} \dot{t}_n \diff s
		=
		\int_{a_{k-1}}^{a_k}\frac{\omega(\dot{x}_n)+d(x_n)/2- C_{z_n}}{\beta(x_n)}
		\diff s.
	\end{equation}
	Therefore, the quantity
	\[
		b_n^k :=
		\int_{a_{k-1}}^{a_{k}} \frac{\diff s}{\beta(x_n(s))}.
	\]
	is well-defined and finite.
	Moreover,
	\begin{equation}
		\label{eq:bound_bnk}
		\frac{a_k - a_{k-1}}{\mu}\le b_{n}^k \le \frac{a_k - a_{k-1}}{\nu}.
	\end{equation}
	From \eqref{eq:dn-def} we obtain 
	\begin{equation}
		\label{eq:C_zn1}
		C_{z_n}=
		\frac{1}{b_{n}^k}
		\left(
			\int_{a_{k-1}}^{a_{k}}\frac{\omega(\dot{x}_n)+d(x_n)/2}{\beta(x_n)}\diff s - T_n^k
		\right).
	\end{equation}	
	By \eqref{eq:def-D0},
	we have $\abs{w(\dot{x}_n)}\le D_0 \norm{\dot{x}_n}$.	
	As a consequence,
	using also \eqref{D1}, 
	$\abs{T_n^k}\le \Delta$
	and \eqref{eq:bound_bnk},
	from \eqref{eq:C_zn1} we have
	\begin{equation}
		\label{eq:C_zn2}
		\abs{C_{z_n}} <
		\frac{(D_0+D_1)\mu}{\nu(a_{k-1} - a_{k})}
		\int_{a_{k-1}}^{a_{k}}\norm{\dot{x}_n}\diff s
		+\frac{\mu\Delta}{a_{k-1}-a_{k}}.
	\end{equation}
	By \eqref{eq:C_zn2},
	to prove that $C_{z_n}$ is bounded, and thus to prove the theorem,
	it suffices to show that 
	\begin{equation}
		\label{eq:Fxn-bounded}
		\sup_{n}\int_{0}^{1}\norm{\dot{x}_n}\diff s < +\infty.
	\end{equation}
	To this end, recall that by \eqref{eq:tn_Cn} we have
	\begin{multline*}
	L\big((x_n,t_n), (\dot{x}_n,\dot{t}_n)\big)
	= L_0(x_n, \dot{x}_n)+2\big(\omega(\dot{x}_n)+d(x_n)/2\big)\dot{t}_n
	-\beta(x_n)\dot{t}_n^{2} \\
	=L_0(x_n, \dot{x}_n)
	+ \frac{\big(\omega(\dot{x}_n)+d(x_n)/2\big)^2-C_{z_n}^{2}}{\beta(x_n)},
	\end{multline*}
	therefore, using also \eqref{eq:C_zn1} we obtain
	\begin{multline}
		\label{eq:int_k_Ln}
		\int_{a_{k-1}}^{a_{k}} L\big((x_n,t_n), (\dot{x}_n,\dot{t}_n)\big)\de s=
		\int_{a_{k-1}}^{a_{k}} L_0(x_n, \dot{x}_n) \diff s\\
		+\int_{a_{k-1}}^{a_{k}}
		\frac{\big(\omega(\dot{x}_n)+d(x_n)/2\big)^2}{\beta(x_n)} \diff s 
		-\frac{1}{b_n^k}
		\left(
			\int_{a_{k-1}}^{a_{k}}
			\frac{\omega(\dot{x}_n)+d(x_n)/2}{\beta(x_n)}\diff s
		\right)^2\\
		+ \frac{2T_n^k}{b_n^k}
		\int_{a_{k-1}}^{a_{k}}
		\frac{\omega(\dot{x}_n)+d(x_n)/2}{\beta(x_n)}\diff s
		- \frac{(T_n^k)^{2}}{b_n^k}.
	\end{multline}
	By the Schwartz inequality in $L^2$, we obtain
	\begin{align*}
		\lefteqn{\left(
				\int_{a_{k-1}}^{a_{k}}
				\frac{\omega(\dot{x}_n)+d(x_n)/2}{\beta(x_n)}\diff s
		\right)^2}&\\
					 &\quad\quad\le\left(
						 \int_{a_{k-1}}^{a_{k}}
						 \frac{\diff s}{\beta(x_n)}
					 \right)
					 \int_{a_{k-1}}^{a_{k}}
					 \frac{\big(\omega(\dot{x}_n)+d(x_n)/2\big)^2}{\beta(x_n)}\diff s\\
					 &\quad\quad= 
					 b_n^k \int_{a_{k-1}}^{a_{k}}
					 \frac{\big(\omega(\dot{x}_n)+d(x_n)/2\big)^2}{\beta(x_n)}\diff s.
	\end{align*}
	Hence, from \eqref{eq:int_k_Ln} we obtain
	\begin{multline}
		\label{Jcoercive}
		\int_{a_{k-1}}^{a_{k}}L\big((x_n,t_n), (\dot{x}_n,\dot{t}_n)\big)\de s \ge
		\int_{a_{k-1}}^{a_{k}} L_0(x_n, \dot{x}_n) \diff s
		\\+ \frac{2T_n^k}{b_n^k}
		\int_{a_{k-1}}^{a_{k}}
		\frac{\omega(\dot{x}_n)+d(x_n)/2}{\beta(x_n)}\diff s
		- \frac{(T_n^k)^{2}}{b_n^k}.
	\end{multline}
	Since $L_0$ is the Lagrangian in a stationary product type local structure,
	as for \eqref{quadgrowth}, we deduce that there exist two positive constants
	$\ell_1,\ell_2 \in \mathbb{R}$ such that, for all the domains $U_k$ of the charts,
	we have 
	\[
		L_0(x_n,\dot{x}_n) \ge \ell_1 \norm{\dot{x}_n}^2 - \ell_2.
	\]
	Since $d_k$, $T_n^k$ and $1/b_{n}^k$ are bounded for each $k$, 
	we obtain the existence of two positive constants 
	$E_1,E_2$ (depending on $\nu,\mu,\Delta,D_0, D_1,\ell_2$)
	such that
	\begin{multline*}
		c \ge \J(z_n)
		= \int_{0}^1 L(x_n, \dot{z}_n)\diff s
		= \sum_{k = 1}^N \int_{a_{k-1}}^{a_{k}}
		L\big((x_n,t_n), (\dot{x}_n,\dot{t}_n)\big) \diff s\\
		\ge
		\ell_1 \int_{0}^{1} \norm{\dot{x}_n}^2\diff s
		- E_1 \int_{0}^1 \norm{\dot{x}_n}\diff s
		- E_2.
	\end{multline*}
	As a consequence, \eqref{eq:Fxn-bounded} holds
	and by \eqref{eq:C_zn1} we conclude that
	$\mathcal{N}_{p,q}$ is $c$-precompact.
\end{proof}

\begin{remark}\label{compactset}
	If $\mathcal{N}_{p,q}$ is $c$-precompact,
	then there exists a compact subset of $M$ 
	that contains the images of all curves in $\J^c$.
	Therefore, Assumption \ref{ass:bounds} holds on such a compact set.
\end{remark}

From Theorem~\ref{teo:cprecbound}, Remark~\ref{compactset}, Theorem~\ref{teo:main}, Theorem~\ref{teo:multiplicity} we deduce the following corollary.
\begin{corollary}
	\label{cor:precompact-boundedBelow}
	Let $L\colon TM \to \mathbb{R}$ satisfy
	Assumptions~\ref{ass:L} and \ref{ass:L1}.
	If $\mathcal{N}_{p,q}$ is $c$-precompact
	for some $c\in\R$ such that $\J^c\neq \emptyset$,
	then $\J^c$ is bounded from below and it admits a minimizer which is
	critical point of $\A$.
	Moreover, if  $\J$ is pseudocoercive, $K$ is complete, 
	and $M$ is a non-contractible manifold,
	then $\mathcal N_{p,q}\neq \emptyset$ and there exists a sequence $(z_n)_{n\in\mathbb{N}}\subset \Omega_{p,q}^{1,2}$
	of critical points of $\A$ such that
	$\lim_{n\to\infty}\A(z_n) = + \infty$.
\end{corollary}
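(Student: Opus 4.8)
The plan is to deduce the corollary from the machinery already in place, the only genuine issue being that Theorems~\ref{teo:main} and \ref{teo:multiplicity} are stated under the \emph{global} Assumption~\ref{ass:bounds}, whereas here we assume only Assumptions~\ref{ass:L} and \ref{ass:L1} together with a precompactness hypothesis. The bridge is furnished by Theorem~\ref{teo:cprecbound} and Remark~\ref{compactset}: $c$-precompactness yields $c$-boundedness and confines the images of all curves in $\J^c$ to a single compact set $\mathcal{K}\subset M$, on which the continuous functions $\lambda$, $L(\cdot,0)$, $\|\partial_vL(\cdot,0)\|$, $-Q(K)$ and $d$ are automatically bounded and, where positivity is required, bounded away from $0$. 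Hence Assumption~\ref{ass:bounds} holds on $\mathcal{K}$. The guiding observation is that the proofs of Proposition~\ref{prop:Jbounded}, Lemma~\ref{lem:zH1bounded-Jc} and Theorem~\ref{PS} only ever evaluate $L$ and the associated quantities along curves staying in $\J^c$, hence in $\mathcal{K}$, so that those arguments go through verbatim with the constants of Assumption~\ref{ass:bounds} relative to $\mathcal{K}$.

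For the first assertion I would argue as follows. By Theorem~\ref{teo:cprecbound}, $\mathcal{N}_{p,q}$ is $c$-bounded, and by Remark~\ref{compactset} the compact set $\mathcal{K}$ exists. Running the proof of Proposition~\ref{prop:Jbounded} with the bounds on $\mathcal{K}$ shows that $\J$ is bounded from below on $\J^c$; since $\J^c\neq\emptyset$ one has $\inf_{\mathcal{N}_{p,q}}\J=\inf_{\J^c}\J$, so $\J$ is bounded from below. Likewise the proof of Theorem~\ref{PS} gives the Palais--Smale condition on $\J^c$. Exactly as in the discussion preceding Theorem~\ref{teo:main}, the resulting minimization argument (through \cite{szulkin1988} or the noncritical interval theorem \cite{corvellec1993}, since $\mathcal{N}_{p,q}$ is only $C^1$) produces a minimizer $z$ of $\J$, which necessarily lies in $\J^c$. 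By Theorem~\ref{teo:punticriticiLiberi}, $z$ is a critical point of $\A$.

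For the second assertion, pseudocoercivity means $\mathcal{N}_{p,q}$ is $c$-precompact, hence $c$-bounded, for every $c\in\R$, so by the previous step $\J$ is bounded from below and satisfies the Palais--Smale condition on each sublevel $\J^c$. This upgrades to the global hypotheses of Theorem~\ref{CDM}: global lower boundedness is again the identity $\inf_{\mathcal{N}_{p,q}}\J=\inf_{\J^c}\J$, while any global Palais--Smale sequence is bounded in $\J$, hence contained in some $\J^c$, where the sublevel Palais--Smale condition applies. Since $K$ is complete, Proposition~\ref{prop:deformation-retract} makes $\mathcal{N}_{p,q}$ a strong deformation retract of the nonempty $\Omega_{p,q}^{1,2}$; in particular $\mathcal{N}_{p,q}\neq\emptyset$ and, $M$ being non-contractible, $\cat(\mathcal{N}_{p,q})=\cat(\Omega_{p,q}^{1,2})=+\infty$ by \cite{fadell1991}. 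Theorem~\ref{CDM} then yields a sequence $(z_n)_n$ of critical points of $\J$ with $\J(z_n)\to+\infty$, and Theorem~\ref{teo:punticriticiLiberi} together with $\A(z_n)=\J(z_n)$ gives the claim.

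The hard part is not any single estimate but making precise that the global Assumption~\ref{ass:bounds} may be replaced by its restriction to the compact set of Remark~\ref{compactset}: one must check that \emph{every} place where Assumption~\ref{ass:bounds} enters the proofs of Proposition~\ref{prop:Jbounded}, Lemma~\ref{lem:zH1bounded-Jc} and Theorem~\ref{PS} involves only the behaviour of $L$ along curves confined to $\mathcal{K}$, so that the $c$-dependent constants on $\mathcal{K}$ suffice. A secondary point, needed for the multiplicity statement, is the passage from the sublevel-wise lower boundedness and Palais--Smale condition to the global versions required by Theorem~\ref{CDM}, which rests on the elementary equality $\inf_{\mathcal{N}_{p,q}}\J=\inf_{\J^c}\J$, valid whenever $\J^c\neq\emptyset$.
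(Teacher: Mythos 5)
Your proof is correct and takes essentially the same route as the paper, whose own argument is precisely the one-line deduction from Theorem~\ref{teo:cprecbound}, Remark~\ref{compactset}, Theorem~\ref{teo:main} and Theorem~\ref{teo:multiplicity}. The two points you single out as the ``hard part'' --- that Assumption~\ref{ass:bounds} is only ever invoked along curves confined to the compact set of Remark~\ref{compactset}, and that the sublevel-wise lower bound and Palais--Smale condition upgrade to the global hypotheses of Theorem~\ref{CDM} --- are exactly the implicit content of the paper's citation of those results.
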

Recalling Example~\ref{statlor}, by Corollary~\ref{cor:precompact-boundedBelow}  we then obtain the following extension of \cite[Theorems 1.2 and 1.3]{giannoni1999} to $C^1$ stationary Lorentzian manifolds. 
\begin{corollary}
	Let $(M,g)$ be a Lorentzian manifold such that $g$ is a $C^1$ metric endowed with a timelike Killing vector field $K$. If $\mathcal{N}_{p,q}$ is $c$-precompact
	for some $c\in\R$ such that $\J^c\neq \emptyset$,
	then there exists a geodesic connecting $p$ to $q$.
	Moreover, if  $\J$ is pseudocoercive, $K$ is complete, 
	and $M$ is a non-contractible manifold,
	then $\mathcal N_{p,q}\neq \emptyset$ and there exists a sequence of geodesics  $(z_n)_{n\in\mathbb{N}}\subset \Omega_{p,q}^{1,2}$
	with unbounded energy.
\end{corollary}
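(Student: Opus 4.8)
The plan is to specialize the abstract existence and multiplicity machinery developed above to the energy Lagrangian $L(x,v):=g_L(v,v)$ and to read off both conclusions from Corollary~\ref{cor:precompact-boundedBelow}. First I would fix, once and for all, a complete auxiliary Riemannian metric $g$ on $M$ (every smooth manifold admits one), so that the whole setting of Section~\ref{sec:problem-reduction} is available. The action functional associated with $L$ is exactly the energy $\A(z)=\inte g_L(\dot z,\dot z)\,\de s$, whose critical points on $\Omega_{p,q}^{1,2}$ are the affinely parametrized geodesics of $g_L$ joining $p$ and $q$; by the regularity analysis of Appendix~\ref{sec:regularity} these weak critical points are genuine $C^2$ geodesics, and the value $\A(z)$ is their energy.

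The next step is to verify that $L$ falls under Assumptions~\ref{ass:L} and \ref{ass:L1}, which is precisely the content of Example~\ref{statlor}. Since $K$ is a Killing vector field it is an infinitesimal symmetry of $L$, and a direct computation gives the Noether charge $N(x,v)=\partial_vL(x,v)[K]=2(g_L)_x(v,K)$, which is the one-form $Q=2g_L(\cdot,K)$ together with $d\equiv 0$; the timelike condition yields $Q(K)=2g_L(K,K)<0$, so Assumption~\ref{ass:L} holds. Moreover $L_c$ coincides with the squared norm of a Riemannian metric $g_R$ on $M$, hence it is pointwise strongly convex and satisfies the growth bounds \eqref{pinched}--\eqref{partialvpinched}; comparing $g_R$ with the chosen auxiliary $g$ on compact sets produces the continuous functions $C$ and $\lambda$ required by Assumption~\ref{ass:L1}. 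Notice that only pointwise (not global) bounds are needed here, so the possible incompleteness of $g_R$ plays no role: Corollary~\ref{cor:precompact-boundedBelow} requires merely Assumptions~\ref{ass:L} and \ref{ass:L1}, the global bounds of Assumption~\ref{ass:bounds} being supplied on the relevant compact set by Theorem~\ref{teo:cprecbound} and Remark~\ref{compactset}.

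With these verifications in hand, the conclusions follow by invoking Corollary~\ref{cor:precompact-boundedBelow}. For the first assertion, $c$-precompactness of $\mathcal N_{p,q}$ for some $c$ with $\J^c\neq\emptyset$ yields a minimizer of $\J$ on $\J^c$ which is a critical point of $\A$, that is, a geodesic from $p$ to $q$. For the second assertion, pseudocoercivity together with the completeness of $K$ and the non-contractibility of $M$ gives $\mathcal N_{p,q}\neq\emptyset$ and a sequence $(z_n)_n$ of critical points of $\A$ with $\A(z_n)\to+\infty$; by the identification above these are geodesics joining $p$ and $q$, and $\A(z_n)$ is their energy, so the energy is unbounded.

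I expect the only genuinely delicate points to be bookkeeping rather than mathematics: reconciling the completeness requirement built into Section~\ref{sec:problem-reduction} with the mere $c$-precompactness hypothesis (handled by choosing a complete $g$ and exploiting that only Assumptions~\ref{ass:L}--\ref{ass:L1} are actually invoked by Corollary~\ref{cor:precompact-boundedBelow}), and making precise that the weak critical points of the energy are classical geodesics (handled by Appendix~\ref{sec:regularity}). Neither is a serious obstacle given the results already established.
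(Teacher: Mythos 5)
Your proposal is correct and follows essentially the same route as the paper: the corollary is obtained by specializing to $L(x,v)=g_L(v,v)$, invoking Example~\ref{statlor} to check Assumptions~\ref{ass:L}--\ref{ass:L1}, applying Corollary~\ref{cor:precompact-boundedBelow} for both the existence and the multiplicity statements, and using Appendix~\ref{sec:regularity} to identify the critical points of $\A$ with classical geodesics. Your treatment of the auxiliary metric --- fixing an arbitrary complete $g$ and producing the continuous functions $C$ and $\lambda$ of Assumption~\ref{ass:L1} by pointwise comparison of $g_R$ with $g$, instead of taking $g=g_R$ as Example~\ref{statlor} does --- is a small but genuine refinement, since it removes the completeness proviso on $g_R$ that the paper's example carries and which the corollary's statement does not assume.
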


\begin{remark}
	Apart from completeness of $K$, whenever $d=0$, a condition ensuring that
	$\mathcal N_{p,q}$ is non-empty for all $p$ and $q$ in $M$ is that the
	distribution $\mathcal D$ defined by the kernel of $Q$ is not integrable
	through any point in $M$.
	Indeed by Chow-Rashevskii Theorem,
	there exists then a horizontal $C^1$ curve $\gamma$ connecting $p$ to $q$.
	Hence, such curve  belongs to $\mathcal N_{p,q}$ with constant $Q(\dot\gamma)=0$.
	We recall that, in the case when $L$ is the quadratic form associated with a
	stationary Lorentzian metric $g_L$ with Killing vector field $K$,
	the non-integrability of $\mathcal D$ through any point is equivalent to the fact
	that $K$ is not static in any region of $M$.
	Geodesic connectedness of a smooth static Lorentzian manifold
	was studied in \cite{CaMaPi03};
	we point out that, thanks to Theorem~\ref{CDM},
	the results in \cite{CaMaPi03} can be extended to a $C^1$ static Lorentzian metric.   
\end{remark}

\section{Dynamic  conditions for pseudocoercivity} 
Inspired by Appendix A in \cite{giannoni1999}, we give some conditions that
ensure that $\mathcal J$ is pseudocoercive.

Let us assume that there exists a $C^1$
function $\varphi\colon M\to \R$ which satisfies the monotonicity condition
$\de \varphi(K)>0$.

If $K$ is complete,
this implies that $M$ is foliated by level sets of the function $\varphi$, and
it splits as $\Sigma\times \R$, where $\Sigma$ is one of this level set.
Notice that \cite[Assumption (4.11)]{giannoni1999}
implies the completeness of the timelike Killing vector
field there, so the setting leading to \cite[Proposition A.3]{giannoni1999} is
actually analogous to ours (compare also with \cite[Theorem 2.3]{CaFlSa08}).
Some differences, on the other hand, are that the splitting $\Sigma\times\R$ is
only $C^1$ and there is no simple link between convexity properties of the
induced Lagrangian $L_0$ and the level set $\Sigma$ (see Remark~\ref{L0sigma}).  

Since   $\Sigma$ is transversal to $K$, using  Assumption~\ref{ass:L} and
arguing as in the proof of Proposition~\ref{productype},
we get that $L$ is given by \eqref{eq:gen-L} in $\Sigma\times \R$
for a $C^1$ Lagrangian
$L_0\colon T\Sigma\to\R$.
Let us denote by $g_\Sigma$ the $C^1$  Riemannian metric
on $\Sigma$ induced by $g$. We assume that the one-form $\omega$ induced by $Q$
on $\Sigma$ has sublinear growth w.r.t. the distance $d_{\Sigma}$ induced by
$g_\Sigma$, i.e. there exist  $\alpha\in [0,1)$ and two non-negative constants
$k_0$ and $k_1$ such that
\begin{equation}
	\label{sublinear}
	\|\omega\|_{\Sigma}\leq k_0+k_1\big(d_\Sigma(x,x_0)\big)^\alpha,
\end{equation}
for some $x_0\in \Sigma$ and all $x\in \Sigma$.
We recall that $\beta$ in an  expression like \eqref{eq:gen-L}
for $L$ is equal to $-Q(K)/2$ (see \eqref{asplitting}).
\begin{proposition}\label{cbounded}
	Let $L$ satisfy Assumption~\ref{ass:L} with $d$ in \eqref{noether} bounded
	and $K$ complete.
	Let $\varphi\colon M\to \R$ be a $C^1$ function such that $\de \varphi(K)>0$.
	Let $\Sigma$ a level set of $\varphi$ and $L_0$ be the
	Lagrangian induced by $L$ on $\Sigma$.
	Assume that
	\begin{itemize}
		\item $L_0$ satisfies \eqref{L0} and \eqref{L0convex}
			in Example~\ref{ex:general-ind-Lagrangian}
			(namely it satisfies the growth conditions and the pointwise convexity)
			and there exist three constants  $\ell_1, \ell_2, \ell_3$ such that
			$\lambda_0(x)\geq  \ell_1>0$,
			$L_0(x,0)\geq \ell_2$ and $\|\partial_vL_0(x,0)\|_{\Sigma}\leq \ell_3$;
		\item $\omega$ satisfies \eqref{sublinear};
		\item there exist two constant $b_1$ and $b_2$ such that $0<b_1\leq \beta(x)\leq b_2$, for all $x\in \Sigma$.
	\end{itemize}
	Then $\J$ is pseudocoercive.
\end{proposition}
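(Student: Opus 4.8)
The plan is to adapt the argument of Theorem~\ref{teo:cprecbound} to the global splitting $M=\Sigma\times\R$, the one genuinely new ingredient being the use of the sublinear bound \eqref{sublinear} in place of a uniform bound on $\omega$. Since $K$ is complete and $\de\varphi(K)>0$, the manifold splits as $\Sigma\times\R$ with $K=\partial_t$, and arguing as in the proof of Proposition~\ref{productype} the Lagrangian $L$ has the form \eqref{eq:gen-L} globally on $\Sigma\times\R$, with $\beta=-Q(K)/2$ (recall \eqref{asplitting}). Fix $c\in\R$ and a sequence $(z_n)_n\subset\J^c$, writing $z_n(s)=\big(x_n(s),t_n(s)\big)$. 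The fixed end-points $p=(x_p,t_p)$ and $q=(x_q,t_q)$ give $x_n(0)=x_p$, $x_n(1)=x_q$ and $t_n(1)-t_n(0)=t_q-t_p=:T$, a constant independent of $n$. As in \eqref{eq:tn_Cn}, writing $2C_{z_n}=N(z_n,\dot z_n)$ one has $\dot t_n=(\omega(\dot x_n)+d(x_n)/2-C_{z_n})/\beta(x_n)$, and integrating over $[0,1]$ yields the analogue of \eqref{eq:C_zn1} expressing $C_{z_n}$ through $x_n$ and $T$, with $b_n:=\inte \de s/\beta(x_n)$ bounded between $1/b_2$ and $1/b_1$.

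Next I would substitute $\dot t_n$ into $\J(z_n)$ exactly as in the derivation of \eqref{Jcoercive}: discarding the square term via the Cauchy--Schwarz inequality I obtain
\[
	\J(z_n)\ge \inte L_0(x_n,\dot x_n)\,\de s
	+\frac{2T}{b_n}\inte\frac{\omega(\dot x_n)+d(x_n)/2}{\beta(x_n)}\,\de s
	-\frac{T^2}{b_n}.
\]
The pointwise convexity and growth of $L_0$ give, as in the derivation of \eqref{quadgrowth}, the bound $L_0(x,\nu)\ge \frac{\ell_1}{4}\norm{\nu}_\Sigma^2+\ell_2-\ell_3^2/\ell_1$; setting $E_n:=\big(\inte\norm{\dot x_n}_\Sigma^2\,\de s\big)^{1/2}$ this controls the first integral from below by $\frac{\ell_1}{4}E_n^2$ up to an additive constant.

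The heart of the matter, and the step I expect to be the main obstacle, is the cross term. Using \eqref{sublinear}, $\abs{\omega(\dot x_n)}\le\big(k_0+k_1\, d_\Sigma(x_n,x_0)^\alpha\big)\norm{\dot x_n}_\Sigma$, together with $d_\Sigma(x_n(s),x_0)\le D+\inte\norm{\dot x_n}_\Sigma\,\de s\le D+E_n$, where $D:=d_\Sigma(x_p,x_0)$ and the length is bounded by $E_n$ via Cauchy--Schwarz, I would estimate $\inte\abs{\omega(\dot x_n)}\,\de s\le k_0 E_n+k_1(D+E_n)^\alpha E_n$. Since $\abs{T}$, $1/b_n$ and $d$ are bounded, the cross term is bounded in absolute value by a constant times $E_n^{1+\alpha}+E_n+1$. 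Because $\alpha\in[0,1)$ we have $1+\alpha<2$, so from $c\ge\J(z_n)\ge \frac{\ell_1}{4}E_n^2-C_1 E_n^{1+\alpha}-C_2 E_n-C_3$ the quadratic term dominates and $E_n$ must stay bounded; this is exactly the point where the sublinear hypothesis \eqref{sublinear} is indispensable, whereas a merely linear growth of $\omega$ would not close the estimate.

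Once $\sup_n E_n<+\infty$, the projected curves $x_n$ are bounded in $H^1$ with the fixed initial point $x_p$, so by the Ascoli--Arzel\`a argument already used in Lemma~\ref{lem:zH1bounded-Jc} a subsequence converges uniformly to some $x_\infty$, whose image, together with the $x_n$ for $n$ large, lies in a compact set $\mathcal K\subset\Sigma$. On $\mathcal K$ the one-form $\omega$ is bounded, hence $C_{z_n}$ is bounded and $\inte\dot t_n^2\,\de s$ is bounded; since $t_n(0)=t_p$ is fixed, a further subsequence of $t_n$ converges uniformly. Thus $z_n=(x_n,t_n)$ has a uniformly convergent subsequence, proving that $\mathcal N_{p,q}$ is $c$-precompact. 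As $c\in\R$ was arbitrary, $\J$ is pseudocoercive.
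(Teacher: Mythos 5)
Your proof is correct, and its core is the same as the paper's: you pass to the global splitting $\Sigma\times\R$ via Proposition~\ref{productype}, eliminate $\dot t_n$ through the constancy of the Noether charge as in \eqref{eq:tn_Cn}, apply Cauchy--Schwarz to reach the inequality \eqref{Jcoercive}/\eqref{sublinear1}, and then use the sublinear bound \eqref{sublinear} so that the term of order $E_n^{1+\alpha}$, $\alpha<1$, is dominated by the quadratic term coming from \eqref{quadgrowth}; this is exactly the paper's mechanism for bounding $\inte\norm{\dot x_n}_\Sigma^2\,\de s$. Where you diverge is the wrap-up: the paper at that point bounds the Noether charge itself, $N(z_n,\dot z_n)=2C_{z_n}$, concluding that $\mathcal N_{p,q}$ is $c$-bounded (Definition~\ref{c-boundedness}) and then invokes Proposition~\ref{prop:cboundTOcprecompact} to pass from $c$-boundedness to $c$-precompactness, whereas you bypass $c$-boundedness and prove precompactness directly, extracting a uniformly convergent subsequence first of $(x_n)$ and then of $(t_n)$. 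Your route is more self-contained --- in particular it does not require checking that Assumptions~\ref{ass:L1} and \ref{ass:bounds} hold globally for $L$ on $\Sigma\times\R$, which the appeal to Proposition~\ref{prop:cboundTOcprecompact} implicitly presupposes (and which the paper leaves to the reader, the hypotheses on $L_0$, $\omega$, $\beta$, $d$ making it true) --- at the cost of essentially re-proving, in the product setting, the compactness step of Lemma~\ref{lem:zH1bounded-Jc}. One point you use tacitly: the Ascoli--Arzel\`a step for $(x_n)$ needs $(\Sigma,g_\Sigma)$ to be complete; this holds because $\Sigma$ is a closed level set of $\varphi$ in the complete manifold $(M,g)$, a fact the paper records in Remark~\ref{L0sigma}, and it would be worth one sentence in your write-up.
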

\begin{proof}
	Recalling that,  by Definition~\ref{cprecompact}, $\J$ is pseudocoercive
	if $\mathcal{N}_{p,q}$ is $c$-precompact
	for all $c \in \mathbb{R}$,
	the thesis follows from Proposition~\ref{prop:cboundTOcprecompact}
	by showing that 
	that $\mathcal N_{p,q}$ is $c$-bounded for all $c\in \R$.

	Let us set $\Delta:=t(q)-t(p)$ and let 
	$z_n=z_n(s)=\big(x_n(s), t_n(s)\big)\in \J^c$
	be a sequence such that $|N(z_n,\dot z_n)|\to\sup_{z\in\J^c} |N(z,\dot z)|$.
	As for \eqref{Jcoercive}, we then get
	\begin{multline}
		\label{sublinear1}
		c\geq \int_0^1  L\big((x_n,t_n), (\dot{x}_n,\dot{t}_n)\big)\de s\\ \ge
		\int_0^1 L_0(x_n, \dot x_n) \diff s
		+ \frac{2\Delta}{b_n}
		\int_0^1
		\frac{\omega(\dot x_n)+d(x_n)/2}{\beta(x_n)}\diff s
		- \frac{\Delta^2}{b_n},
	\end{multline}
	where $b_n=\int_0^1\frac{1}{\beta(x_n(s))}\de s$.
	Then  taking into account that $d$ is bounded,
	$0<b_1\leq \beta(x) \leq  b_2$, $\lambda_0(x)\geq  \ell_1>0$,  $L_0(x,0)\geq
	\ell_2$ and $\|\partial_vL_0(x,0)\|_{\Sigma}\leq \ell_3$,
	for all $x\in \Sigma$,
	using \eqref{quadgrowth} for $L_0$ and \eqref{sublinear},
	we obtain from \eqref{sublinear1} that
	$\int_0^1\|\dot x_n\|^2_\Sigma\de s$ is bounded.
	Analogously to \eqref{eq:tn_Cn}	we have then
	\[
		N(z_n,\dot z_n)=2C_{z_n}=
		\frac{2}{b_n}\left(\int_0^1\frac{\omega(\dot x_n)+d(x_n)/2}{\beta}\de s-\Delta\right)
	\]
	and hence $N(z_n, \dot z_n)$ is bounded as well.  
\end{proof}
\begin{remark}
	The proof of  Proposition~\ref{cbounded}
	also shows that the manifold $\mathcal N_{p,q}$ associated to
	the Lagrangian in Example~\ref{ex:general-ind-Lagrangian}
	is $c$-bounded for all $c\in\R$ provided that $L_0$ satisfies \eqref{L0} and
	\eqref{L0convex} in Example~\ref{ex:general-ind-Lagrangian}, $d$ is bounded,
	$\omega$ has sublinear growth on $S$
	(hence \eqref{sublinear} holds)
	and there exist some constants
	$b_1, b_2, \ell_1, \ell_2, \ell_3$ such that
	$0<b_1\leq \beta(x)\leq b_2$, $\lambda_0(x)\geq  \ell_1>0$,  $L_0(x,0)\geq \ell_2$
	and $\|\partial_vL_0(x,0)\|_{\Sigma}\leq \ell_3$, for all $x\in S$.
\end{remark}

\begin{remark}\label{L0sigma}
	The strong convexity condition for $L_0$ holds if $L_0$,
	satisfying \eqref{monotone} on $T\Sigma$, satisfies also
	\eqref{strong} on $\Sigma$.
	This condition can be considered as a replacement of being $\Sigma$ a
	spacelike and complete hypersurface when $L$ is the quadratic form of a
	stationary Lorentzian manifold
	(in our setting the Riemannian metric on $\Sigma$, induced by the auxiliary one $g$,
	is complete because $g$ is complete by assumptions).
	Indeed, in such a case, it is enough to assume that
	$\nabla\varphi$ is timelike (i.e. $\varphi$ is a $C^1$ time function)
	to get that a level set $\Sigma$ of $\varphi$ is spacelike.
	The existence of such a $\varphi$ is guaranteed if there exists a spacelike hypersurface
	that intersects once every flow line of the complete timelike Killing vector field
	$K$ (see \cite[Appendix A]{giannoni1999}).
\end{remark}

\appendix
\section{Regularity of the Critical Points}
\label{sec:regularity}
In this section we show that a critical point of the action functional $\A$ on
$\Omega_{p,q}$ is actually a curve of class $C^1$. This is a quite standard result in relation with Assumptions \eqref{ass:L} and \eqref{ass:L1}, but we give the details for the reader convenience.
\begin{proposition}
	Let $L\colon TM \to \mathbb{R}$ be a Lagrangian
	satisfying Assumptions~\ref{ass:L} and \ref{ass:L1}
	and let $z$ be a critical point of the action functional
	$\A\colon \Omega^{1,2}_{p,q}\to\R$, $\A(z)=\int_0^1L(z,\dot z)\diff s$.
	Then, both $z$ and $\partial_v L(z,\dot{z})$ are of class $C^1$,
	the Euler-Lagrange equation \eqref{eq:Euler-Lagrange} holds
	in  classical sense, namely for all $i\in\{0,\ldots,m\}$,
	\begin{equation}
		\label{ELagain}
		\frac{\partial L}{\partial z^i}\big(z(s),\dot{z}(s)\big)
		= \frac{d}{ds}
		\left(\frac{\partial L}{\partial v^i}\big(z(s),\dot{z}(s)\big)\right),
		\quad \text{for all $s\in [0,1]$,}
	\end{equation}
	and $z$ satisfies the conservation law
	\begin{equation}
		\label{conslaw}
		\partial_v L(z,\dot{z})[\dot{z}] - L(z,\dot{z}) = E,
	\end{equation}
	for some constant $E\in\R$. 
\end{proposition}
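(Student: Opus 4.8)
The plan is to bootstrap the regularity of the weak extremal $z$ by the classical route: localize, pass to the integrated (Du Bois--Reymond) form of the Euler--Lagrange equation, and then invert the Legendre map to recover continuity of $\dot z$. First I would localize: since $z\in W^{1,2}$ is absolutely continuous its image is compact, so after covering it by finitely many charts I may assume $L$ is defined on an open set $U\subset\R^{m+1}$ and regard $z$ as a curve in $H^{1}([0,1],U)$. The growth bounds \eqref{partialxpinched}--\eqref{partialvpinched} for $L_c$, together with \eqref{L} (whose extra term is quadratic in $v$), guarantee that $\partial_x L(z,\dot z)\in L^{1}$ and $\partial_v L(z,\dot z)\in L^{2}$, so the weak equation $\int_0^1\big(\partial_x L(z,\dot z)[\xi]+\partial_v L(z,\dot z)[\dot\xi]\big)\,\de s=0$, valid for all $\xi\in W^{1,2}_0$, is meaningful. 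By the fundamental lemma of the calculus of variations there is a constant covector $c$ with, in coordinates,
\[
\partial_v L\big(z(s),\dot z(s)\big)
= c + \int_0^s \partial_x L\big(z(\tau),\dot z(\tau)\big)\,\de\tau
=: p(s),\qquad\text{a.e.},
\]
the right-hand side being absolutely continuous in $s$.

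The main obstacle is that $L$ is indefinite, so $v\mapsto\partial_v L(x,v)$ is not monotone and cannot be inverted directly; the crux is to reduce to the strongly convex $L_c$. Here I would exploit that the critical point lies in $\mathcal N_{p,q}$ (Theorem~\ref{teo:punticriticiLiberi}): $N(z,\dot z)=Q(\dot z)+d(z)$ is a.e. equal to a constant $N_0$, whence $Q(\dot z)=N_0-d(z(s))$ is \emph{already} continuous in $s$ (because $z$ is continuous and $d\in C^1$), even before any regularity of $\dot z$ is known. Subtracting the Noether contribution through \eqref{L} gives
\[
\partial_v L_c(z,\dot z)
= p(s)-\frac{2\big(N_0-d(z)\big)}{Q(K(z))}\,Q_{z}
=: P(s),
\]
whose right-hand side is continuous in $s$ since $Q(K)\neq0$. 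Now I invoke Assumption~\ref{ass:L1}: the pointwise strong convexity \eqref{monotone} makes $v\mapsto\partial_v L_c(x,v)$ injective with a $1/\lambda(x)$-Lipschitz inverse on its image, and joint continuity of the inverse in $(x,\cdot)$ follows from continuity of $\partial_v L_c$ together with \eqref{monotone} and the coercivity coming from \eqref{quadgrowth}. Hence $\dot z(s)=\big(\partial_v L_c(z(s),\cdot)\big)^{-1}(P(s))$ coincides a.e. with a continuous function, so $z\in C^1$. Then $\partial_x L(z,\dot z)$ is continuous, $p$ is $C^1$, the Euler--Lagrange equation \eqref{ELagain} holds pointwise, and $\partial_v L(z,\dot z)=p$ is $C^1$.

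Finally, the conservation law \eqref{conslaw} encodes the autonomy of $L$ and I would derive it from inner variations. For $\eta\in C^\infty_c((0,1))$ set $z_\epsilon(s)=z\big(s+\epsilon\eta(s)\big)$, which for small $\epsilon$ is an admissible family in $\Omega_{p,q}^{1,2}$ fixing the endpoints; changing variables shows that $\epsilon\mapsto\A(z_\epsilon)$ is differentiable with
\[
\frac{\de}{\de\epsilon}\A(z_\epsilon)\big|_{\epsilon=0}
= \int_0^1\big(\partial_v L(z,\dot z)[\dot z]-L(z,\dot z)\big)\,\dot\eta\,\de s .
\]
Since $z$ is a critical point and these reparametrizations fix the endpoints, this derivative vanishes, so the continuous function $E=\partial_v L(z,\dot z)[\dot z]-L(z,\dot z)$ has vanishing distributional derivative and is therefore constant, which is \eqref{conslaw}. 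I expect the delicate point throughout to be the Legendre inversion of the previous paragraph, precisely because the inversion cannot be upgraded to a $C^1$ dependence on $x$ (as $L_c$ is merely $C^1$): consequently $z$ is in general only $C^1$, and the energy identity must be obtained from inner variations rather than by differentiating $E$ directly.
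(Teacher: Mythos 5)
Your regularity argument (the first two paragraphs) is essentially correct, and it is a nice variant of the paper's proof: the paper first puts $L$ into the stationary product form of Proposition~\ref{productype}, uses constancy of the Noether charge to solve for the $K$-component $\dot t$ of the velocity, and then inverts the reduced Legendre map on the transversal factor, whereas you stay in arbitrary coordinates, write $\partial_vL_c(z,\dot z)=\partial_vL(z,\dot z)-\frac{2Q(\dot z)}{Q(K(z))}\,Q_{z}$ via \eqref{L}, observe that $Q(\dot z)=N_0-d(z)$ is continuous by Theorem~\ref{teo:punticriticiLiberi}, and invert the full strongly monotone fibre map of $L_c$. The two pillars (Du Bois--Reymond momentum, Noether constancy, and \eqref{monotone} giving an injective fibre derivative with continuous inverse) are the same, and your version avoids the local splitting; up to the routine verification that the inverse of $(x,v)\mapsto\big(x,\partial_vL_c(x,v)\big)$ is jointly continuous (same level of detail as the paper), this half is fine.

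The conservation law is where there is a genuine gap. You assert that $\frac{\de}{\de\epsilon}\A(z_\epsilon)\big|_{\epsilon=0}=0$ ``since $z$ is a critical point and these reparametrizations fix the endpoints''. Criticality only says $\diff\A(z)[\xi]=0$ for $\xi\in T_z\Omega^{1,2}_{p,q}$; to transfer this to the inner variation you need the chain rule, i.e.\ that $\epsilon\mapsto z_\epsilon$ be differentiable \emph{as a path in} $W^{1,2}$ with velocity $\eta\dot z$. Precisely at the regularity available here this fails: since $L_c$ is only $C^1$, the Legendre inversion leaves $\dot z$ merely continuous (possibly of unbounded variation), so $\eta\dot z$ need not lie in $T_z\Omega^{1,2}_{p,q}$ at all, and $(z_\epsilon-z)/\epsilon\to\eta\dot z$ uniformly but in general not in $W^{1,2}$. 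Inner variations do yield \eqref{conslaw} for \emph{minimizers} of $\A$, because there one only needs that the scalar function $\epsilon\mapsto\A(z_\epsilon)$ is differentiable and has a minimum at $\epsilon=0$; but the critical points of this paper are not minimizers of $\A$ on $\Omega^{1,2}_{p,q}$ (they are constrained minimizers or Ljusternik--Schnirelmann points of $\J$ on $\mathcal{N}_{p,q}$, and the curves $z_\epsilon$ leave $\mathcal{N}_{p,q}$, since $N$ is affine rather than homogeneous in $v$). So the vanishing of the inner variation is exactly equivalent to \eqref{conslaw}, and your argument is circular at this step; note that the paper does not argue this way, but invokes Proposition 1.16 of \cite{Buttazzo1998}.

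The statement itself is true, but closing the gap needs real input, and the natural one is the convexity of $L_c$ that you have already exploited. The fibrewise Fenchel transform $H_c(x,\pi)=\sup_{v\in T_xM}\big(\pi[v]-L_c(x,v)\big)$ is of class $C^1$ even though $L_c$ is only $C^1$ (envelope theorem: the maximizer $v^*(x,\pi)$ is unique and continuous, $\partial_\pi H_c=v^*$, $\partial_xH_c=-\partial_xL_c(x,v^*)$). Along the critical curve one has $p_c:=\partial_vL_c(z,\dot z)=P\in C^1$ and $E=\partial_vL(z,\dot z)[\dot z]-L(z,\dot z)=H_c(z,p_c)+\frac{\big(N_0-d(z)\big)^2}{Q(K(z))}$, so $E$ is $C^1$; differentiating this identity and using the classical Euler--Lagrange equation \eqref{ELagain}, which you have already established, gives $E'\equiv 0$. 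Some argument of this kind (or the cited result) is needed where you currently have only the phrase ``this derivative vanishes''.
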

\begin{proof}	 
	As regularity of a critical curve is a local result,
	by Proposition \ref{productype} we can assume, without loosing generality,
	that $L$ is a Lagrangian on $U\times I$,
	where $U$ is a precompact open neighborhood of $\R^m$ and $I\subset\R$
	an open interval, defined as
	\begin{equation}
		\label{againlocal}
		L\big((x,t),(\nu,\tau)\big)
		=L_0(x,\nu) + 2\big(\omega(\nu)+d(x_n)/2\big)\tau -\beta(x)\tau^{2},
	\end{equation}
	for all $\big((x,t),(\nu,\tau)\big)\in (U\times I)\times (\R^m\times\R)$.
	Arguing as in the proof of Proposition~\ref{productype},
	for any point $\bar z \in M$,
	we can take  $U$ as a hypersurface in $M$ passing through $\bar z$
	such that $\omega$ vanishes at $\bar z$.
	Let $z:[0,1]\to U\times I$, $z(s)=(x(s), t(s))$ be a critical point for $\A$
	then for all $(\xi,\eta)\in H^1_0([0,1], \R^m)\times H^1_0([0,1], \R)$ we have 
	\begin{multline}
		\label{eq:diff-fab}
		0=	\diff \A(z)[(\xi,\eta)]
		=
		\inte
		\left(
			\partial_x L_0(x,\dot{x})[\xi]+
		\partial_\nu L_0(x,\dot{x})[\dot{\xi}]\right)\diff s\\
		+ 2
		\inte
		\left(\partial_x\omega(\xi,\dot x)\dot{t}
			+ \omega(\dot \xi)\dot{t} + \omega(\dot{x})\dot{\eta}
			+\frac{1}{2}\de d(\xi)\dot t
			+\frac{d(x)}{2}\dot \eta
		\right)\diff s\\
		- 
		\inte
		\left(
			\diff\beta(\xi)\dot{t}^2 + 
			2\beta(x)\dot{t}\dot{\eta}
		\right)\diff s.
	\end{multline}
	Since $z=(x,t)$ is a critical point of $\A$, 
	there exists a constant $C_z\in \mathbb{R}$ such that
	\[
		N(z,\dot z) =2\omega(\dot{x})- 2 \beta(x)\dot{t}+d(x)=2C_z,
	\]
	hence we have
	\begin{equation}
		\label{eq:dott-Cz}
		\dot{t} = \frac{\omega(\dot{x})+d(x)/2 - C_z}{\beta(x)}.
	\end{equation}
	Moreover, from \eqref{eq:diff-fab},
	for all $\xi \in C^{\infty}_{0}([0,1],\mathbb{R}^m)$ we have
	\begin{multline*}
		\diff A(z)[\xi,0]
		=
		\inte
		\left(
			\partial_x L_0(x,\dot{x})[\xi]+
		\partial_\nu L_0(x,\dot{x})[\dot{\xi}]\right)\diff s\\
		+ 2
		\inte
		\left(\partial_x\omega(\xi,\dot{x})
			+ \omega(\dot{\xi})
		+\frac{1}{2}\de d(\xi)\right)\dot{t}\diff s
		- 
		\inte
		\diff\beta(\xi)\dot{t}^2 \diff s
		= 0,
	\end{multline*}
	hence, 
	\begin{multline*}
		\inte
		\left(
			\partial_\nu L_0(x,\dot{x})[\dot{\xi}] + 
			2	\omega(\dot{\xi})\dot{t}
		\right) \diff s = 
		- \inte			
		\partial_x L_0(x,\dot{x})(\xi)\diff s\\
		-\inte \big(2\partial_x\omega(\xi,\dot x)\dot{t}
			+\de d(\xi)\dot t 
			-\diff\beta(\xi)\dot{t}^2 
		\big) \diff s.
	\end{multline*}
	Then, there exists an $L^1$ map $h\colon [0,1] \to (\mathbb{R}^m)^*$ such that
	\[
		\inte
		\left(
			\partial_\nu L_0(x,\dot{x})[\dot{\xi}] + 
			2	\omega(\dot{\xi})\dot{t}
		\right) \diff s = 
		\inte h(s)[\xi]\diff s.
	\]
	Denoting by $H$ a primitive of $- h$, 
	we obtain the existence of a constant $A \in (\mathbb{R}^m)^*$
	such that 
	\begin{equation}
		\label{eq:A5bis} 
		\partial_\nu L_0(x,\dot{x}) + 
		2	\dot{t}\omega_x
		= 
		A + H
		\qquad \text{a.e. on }[0,1].
	\end{equation}
	Using \eqref{eq:dott-Cz}
	we obtain
	\begin{equation}
		\label{eq:smooth-proof2}
		\partial_\nu L_0(x,\dot{x}) + 
		2\frac{\omega(\dot x)}{\beta(x)}\omega_x
		= A + H + \frac{2C_z-d(x)}{\beta(x)}\omega_x,
		\qquad \text{a.e. on }[0,1],
	\end{equation}
	where the right-hand side is an absolute continuous function.

	Let us consider the continuous maps
	$\mathcal L\colon U\times \mathbb{R}^m \to (\mathbb{R}^m)^*$
	and 
	$\mathcal{P}\colon U\times\mathbb{R}^m \to U\times (\mathbb{R}^m)*$
	defined respectively  as
	\[
		\mathcal L(x,\nu): = \partial_\nu L_0(x,\nu) + 2
		\frac{\omega(\nu)}{\beta(x)}\omega_x
	\]
	and 
	\[
		\mathcal P(x,\nu) := \left(x,\mathcal{L}(x,\nu)\right).
	\]
	As in the proof of Proposition~\ref{productype},
	recalling how $U$ has been chosen, and  up to take a smaller $U$,
	we can state that there exists $C>0$ such that for any $x\in U$ and for all
	$\nu_1,\nu_2\in \R^m$
	\begin{equation}
	\label{monotoneagain}
	\big(\mathcal L(x,v_2)-\mathcal L(x,v_1)\big)[v_2-v_1]\geq C \|v_2-v_1\|^2.
	\end{equation}
	Notice that \eqref{monotoneagain} implies that for each $x\in U$,
	$\mathcal L(x,\cdot)$ is injective
	with inverse which is continuous on the image of $\mathcal L(x,\cdot)$.
	Using again \eqref{monotoneagain} together with the continuity of $\mathcal L$
	on $U\times\R^m$,
	we get that the map $\mathcal P$ is injective with continuous inverse as well.
	Hence,by \eqref{eq:smooth-proof2}, 
	\begin{equation}
		\label{bootstrap}
		\big(x(s),\dot{x}(s)\big)=
		\mathcal P^{-1}\bigg(x(s),
			A + H(s) + \frac{2C_z-d(x)}{\beta(x(s))}\omega_{x(s)}
		\bigg)
	\end{equation}
	and so $x$ is of class $C^1$.
	By \eqref{eq:dott-Cz}, even $\dot{t}$ is continuous, so
	$z$ is of class $C^1$ in the coordinate system where the stationary product
	type local structure \eqref{againlocal} of $L$ holds and then in any other
	coordinate system.
	Hence, the function $h$ is actually a continuous function, 
	and the right-hand side of 
	\eqref{eq:A5bis} is of class $C^1$.
	Since $\omega$ is a $C^1$ one-form, \eqref{eq:smooth-proof2}
	shows that also $\partial_\nu L_0(x,\dot{x})$ is of class $C^1$.
	Being $\partial_vL(z,\dot z)$ identifiable with 
	\[
		\Big(
			\partial_\nu L_0(x,\dot{x}) 
			+ 2 \omega \dot{t}
			, \big(2\omega(\dot x)+d(x)-2\beta(x)\dot t\big)\de t
		\Big),
	\]
	we deduce that the function
	$s\in[0,1]\mapsto \partial_vL\big(z(s),\dot z(s)\big)$
	is $C^1$ as well, and then \eqref{ELagain} holds.
	Moreover, by standard arguments, 
	$z$ satisfies the conservation law \eqref{conslaw} 
	for some constant $E \in \mathbb{R}$ (see, e.g., Proposition 1.16 of \cite{Buttazzo1998}).
\end{proof}
\begin{remark}
	We notice that,
	if $L_0$ admits positive definite  vertical Hessian at some  vector in $\nu\in TU$,
	then $\mathcal L$ admits a bijective fiberwise derivative,
	so it is a local $C^1$-diffeomorphism in
	a neighborhood $\mathcal V$ of $\nu$ in $TU$.
	Hence, $\mathcal P$ has a $C^1$ inverse on
	$\mathcal P(\mathcal V)$ and then from \eqref{bootstrap} we get that
	$\dot x$ is $C^1$ on an open interval $J$
	containing the instant $s_0$ such that $\dot x(s_0)=\nu$.
	From \eqref{eq:dott-Cz}, $\dot t$ is $C^1$ as well on $J$
	and then $z\in C^2(J,M)$.
	We observe  that this holds in particular when $L$ is the quadratic form
	associated with  $C^1$ stationary Lorentzian metric $g_L$ (see Example~\ref{statlor}),
	hence its critical curves are $C^2$
	on the interval where they are defined and then they are classical geodesics.			
\end{remark}


\end{document}